\theoremstyle{plain}
\newtheorem{theorem}{Theorem}[section]
\theoremstyle{remark}
\newtheorem{remark}[theorem]{Remark}
\newtheorem{example}[theorem]{Example}
\theoremstyle{plain}
\newtheorem{lemma}[theorem]{Lemma}
\newtheorem{proposition}[theorem]{Proposition}
\newtheorem{definition}[theorem]{Definition}
\numberwithin{equation}{section}
\def\N{{\mathbb N}}
\def\Z{{\mathbb Z}}
\def\R{{\mathbb R}}
\def\C{{\mathbb C}}
\newcommand{\F}{{\mathcal F}}
\renewcommand{\O}{\Omega}
\newcommand{\calL}{{\mathcal L}}
\newcommand{\one}{{{\bf 1}}}
\newcommand{\lb}{\langle}
\newcommand{\rb}{\rangle}
\newcommand{\limn}{\lim_{n\to\infty}}
\newcommand{\wh}{\widehat}
\newcommand{\supp}{\text{\rm supp\,}}
\newcommand{\ud}{\, d}
\newcommand{\Tr}{{\rm tr}}
\newcommand{\ext}{{\rm ext}}
\newcommand{\sign}{\text{sign}}
\newcommand {\Schw}{\mathcal{S}}
\newcommand {\Distr}{\mathcal{D}}
\newcommand{\Do}{\mathrm D}
\newcommand{\wt}{\widetilde}
\newcommand{\wtE}{\wt{\mathcal{E}}_+^m}
\newcommand{\wtEr}{\wt{E}_+^m}
\begin{document}

\author{Nick Lindemulder}
\address{Delft Institute of Applied Mathematics\\
Delft University of Technology \\ P.O. Box 5031\\ 2600 GA Delft\\The
Netherlands} \email{N.Lindemulder@tudelft.nl}
\author{Martin Meyries}
\address{Institut f\"ur Mathematik\\ Martin-Luther-Universit\"at Halle-Wittenberg\\ 06099 Halle
(Saale)\\ Germany} \email{martin.meyries@dataqube.de}
\author{Mark Veraar}
\address{Delft Institute of Applied Mathematics\\
Delft University of Technology \\ P.O. Box 5031\\ 2600 GA Delft\\The
Netherlands} \email{M.C.Veraar@tudelft.nl}

\thanks{The first and third author
are supported by the VIDI subsidy 639.032.427 of the Netherlands Organisation for Scientific Research (NWO)}

\date\today

\title[Complex interpolation with Dirichlet boundary conditions]{Complex interpolation with Dirichlet boundary conditions on the half line}

\begin{abstract}
We prove results on complex interpolation of vector-valued Sobolev spaces over the half-line with Dirichlet boundary condition. Motivated by applications in evolution equations, the results are presented for Banach space-valued Sobolev spaces with a power weight. The proof is based on recent results on pointwise multipliers in Bessel potential spaces, for which we present a new and simpler proof as well. We apply the results to characterize the fractional domain spaces of the first derivative operator on the half line.
\end{abstract}

\keywords{complex interpolation with boundary conditions, Bessel potential spaces, Sobolev spaces, pointwise multipliers, UMD, $H^\infty$-calculus, $A_p$-weights}

\subjclass[2010]{Primary: 46E35; Secondary: 42B25, 46B70, 46E40, 47A60}

\maketitle

\section{Introduction}
The main result of the present paper is the following. Let $W^{1,p}_0(\R_+;X)$ be the first order Sobolev space over the half line with values in a UMD Banach space $X$ vanishing at $t=0$, where $p\in (1, \infty)$. Then for complex interpolation we have
$$[L^p(\R_+;X),W^{1,p}_0(\R_+;X)]_{\theta} = H^{\theta, p}_0(\R_+; X), \qquad \theta \in (0,1), \quad \theta \neq 1/p,$$
see Theorems \ref{thm:compl_int_bd-cond_half-space} and \eqref{eq:W0H0}. Here $H^{\theta, p}_0$ denotes the fractional order Bessel potential space with vanishing trace for $\theta > 1/p$, and $H^{\theta, p}_0 = H^{\theta, p}$ for $\theta < 1/p$. In more generality, we consider spaces with Muckenhoupt power weights $w_\gamma(t) = t^\gamma$, where the critical value $1/p$ is shifted accordingly.

In the scalar-valued case $X=\C$, the result is well-known and due to Seeley \cite{Se}. The vector-valued result was already used several times in the literature without proof. Seeley also considers the case $\theta=1/p$, which we ignore throughout for simplicity, and the case of domains $\Omega\subseteq \R^d$. The corresponding result for real interpolation is due to Grisvard \cite{Grisvard} and more elementary to prove.

At the heart of complex interpolation theory with boundary conditions is the pointwise multiplier property of the characteristic function of the half-space $\one_{\R_+}$ on $H^{\theta,p}(\R; X)$ for $0<\theta<1/p$. It is due to Shamir \cite{Shamir} and Strichartz \cite{Strichartz} in the scalar-valued case. In \cite{MeyVerpoint} by the second and third author, a general theory of pointwise multiplication of weighted vector-valued functions was developed. As a main application the multiplier result was extended to the vector-valued and weighted setting. An alternative approach to this was found by the first author in \cite{Lindiff16} and is based on a new equivalent norm for vector-valued Bessel potential spaces. In Section \ref{sec:pointwise} we present a new and simpler proof of the multiplier property of $\one_{\R_+}$, which is based on the representation of fractional powers of the negative Laplacian as a singular integral and the Hardy-Hilbert inequality.

For future reference and as it is only a minimal extra effort, we will formulate and prove some elementary assertions for the half space $\R_+^d$ for $d\geq 1$ or even domains, and general $A_p$ weights $w$. In order to make the presentation as self-contained as possible, we further fully avoid the use of Triebel--Lizorkin spaces and Besov spaces, but we point out where they could be used.  We will only use the UMD property of $X$ through standard applications of the Mihlin multiplier theorem. Several results will be presented in such a way that the UMD property is not used. A detailed explanation of the theory of UMD spaces and their connection to harmonic analysis can be found in the monograph \cite{HNVW1}. In their reflexive range, all standard function spaces are UMD spaces.

The complex interpolation result has applications in the theory of evolution equations, as it yields a characterization of the fractional power domains of the time derivative $\Do((d/dt)^\theta)$ and $\Do((-d/dt)^\theta)$ on $\R_+$. Here the half line usually stands for the time variable and $X$ is a suitable function space for the space variable.  For instance such spaces can be used in the theory of Volterra equations (see \cite{Pruvolt,Zac03, Zac05}), in evolution equations with form methods (see \cite{dier2016non,fackler2016non}), in stochastic evolution equations (see \cite{NVW12a}).

In order to deal with rough initial values it is useful to consider a power weights $w_\gamma(t)=t^{\gamma}$ in the time variable. Examples of papers in evolution equation where such weights are used include \cite{Angenent90, ClSi01, KPW, lindemulder2017maximal, MeySchn, MeySch, MeyVerTr, PrSi, PrSiZa}. The monographs \cite{Am,Lun,PrSibook} are an excellent source for applications of weighted spaces to evolution equations. In order to make our results available to this part of the literature as well, we present our interpolation results for weighted spaces.
For the application to evolution equations it suffices to consider interpolation of vector-valued Sobolev spaces over $\R_+$ with Dirichlet boundary conditions and therefore we focus on this particular case. In a future paper we extend the results of \cite{Grisvard} and \cite{Se} to weighted function spaces on more general domains $\Omega\subseteq \R^d$, in the scalar valued situation, where one of the advantages is that Bessel potential spaces have a simple square function characterization.

\subsubsection*{Overview}
\begin{itemize}
\item In Section \ref{sec:prel} we discuss some preliminaries from harmonic analysis.
\item In Section \ref{sec:weighted} we introduce the weighted Sobolev spaces and Bessel potential spaces.
\item In Section \ref{sec:pointwise} we present an elementary proof of the pointwise multiplier theorem.
\item In Section \ref{sec:interpo} we present some results on interpolation theory without boundary conditions.
\item In Section \ref{sec:intbdr} we present the main results on interpolation theory with boundary conditions and applications to fractional powers.
\end{itemize}

\subsubsection*{Notation}

$\R^d_+ = (0,\infty)\times\R^{d-1}$ denotes the half space. We write $x = (x_1, \tilde{x})\in \R^{d}$ with $x_1\in \R$ and $\tilde{x}\in \R^{d-1}$ and define the weight $w_{\gamma}$ by $w_{\gamma}(x_1, \tilde{x}) = |x_1|^{\gamma}$. Sometimes it will be convenient to also write $(t,x)\in \R^d$ with $t\in \R$ and $x\in \R^{d-1}$. The operator $\F$ denotes the Fourier transform. We write $A \lesssim_p B$ whenever $A \leq C_p B$ where $C_p$ is a constant which depends on the parameter $p$. Similarly, we write $A\eqsim_p B$ if $A\lesssim_p B$ and $B\lesssim_p A$.

\subsection*{Acknowledgements}
We thank the anonymous referees for their helpful comments.

\section{Preliminaries\label{sec:prel}}

\subsection{Weights}

A locally integrable function $w:\R^d\to (0,\infty)$ will be called a {\em weight function}. Given a weight function $w$ and a Banach space $X$ we define $L^p(\R^d,w;X)$ as the space of all strongly measurable $f:\R^d\to X$ for which
\[\|f\|_{L^p(\R^d,w;X)} := \Big(\int \|f(x)\|^p w(x) \ud x\Big)^{\frac1p}\]
is finite. Here we identify functions which are a.e.\ equal.

Although we will be mainly interested in a special class of weights, it will be natural to formulate some of the result for the class of Muckenhoupt $A_p$-weights. For $p\in (1, \infty)$, we say that $w\in A_p$ if
\[[w]_{A_p} = \sup_{Q} \frac{1}{|Q|} \int_Q w(x) \ud x \cdot \Big(\frac{1}{|Q|} \int_Q w(x)^{-\frac{1}{p-1}}\ud x\Big)^{p-1}<\infty.\]
Here the supremum is taken over all cubes $Q\subseteq \R^d$ with sides parallel to the coordinate axes. For $p\in (1, \infty)$ and a weight $w:\R^d\to (0,\infty)$ one has $w\in A_p$ if and only the Hardy--Littlewood maximal function is bounded on $L^p(\R^d,w)$.
We refer the reader to \cite[Chapter 9]{GrafakosM} for standard properties of $A_p$-weights. For a fixed $p$ and a weight $w\in A_p$, the weight $w' = w^{-1/(p-1)}\in A_{p'}$ is the $p$-dual weight. By H\"older's inequality one checks that
\begin{equation}\label{eq:fgdualityweight}
\int |f(x)|  |g(x)| \ud x\leq  \|f\|_{L^p(\R^d,w)} \|g\|_{L^{p'}(\R^d,w')}
\end{equation}
for $f\in L^p(\R^d,w)$ and $g\in L^{p'}(\R^d,w')$. Using this, for each $w\in A_p$ one can check that $L^p(\R^d,w;X)\subseteq L^1_{\rm loc}(\R^d;X)$.

The following will be our main example.
\begin{example}
Let
\[w_{\gamma}(x_1, \tilde{x}) = |x_1|^{\gamma}, \ \ x_1\in \R, \tilde{x}\in \R^{d-1}.\]
As in \cite[Example 9.1.7]{GrafakosM}) one sees that $w_{\gamma}\in A_p$ if and only if $\gamma\in (-1, p-1)$.
\end{example}

\begin{lemma}\label{lem:convolution}
Let $p\in (1, \infty)$ and $w\in A_p$. Assume $\phi\in L^1(\R^d)$ and $\int \phi \ud x = 1$. Let $\phi_n(x) = n^d\phi(nx)$. Assume $\phi$ satisfies any of the following conditions:
\begin{enumerate}
\item $\phi$ is bounded and compactly supported
\item There exists a radially decreasing function $\psi\in L^1(\R^d)$ such that $|\phi|\leq \psi$ a.e.
\end{enumerate}
Then for all $f\in L^p(\R^d;X)$, $\phi_n*f\to f$ in $L^p(\R^d,w;X)$ as $n\to \infty$. Moreover, there is a constant $C$ only depending on $\phi$ such that $\|\phi_n *f\| \leq C Mf$ almost everywhere.
\end{lemma}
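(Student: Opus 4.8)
The plan is to deduce everything from the pointwise domination by the Hardy--Littlewood maximal function together with the $A_p$-boundedness of that operator. First I would establish the pointwise bound $\|(\phi_n*f)(x)\| \le C\,Mf(x)$ for a.e.\ $x$, with $C$ depending only on $\phi$. Under hypothesis (2) this is the classical estimate: if $|\phi|\le\psi$ with $\psi$ radially decreasing and integrable, then $|\phi_n|\le\psi_n$ with $\psi_n(x)=n^d\psi(nx)$, and a standard layer-cake/annular decomposition of the radially decreasing $L^1$ kernel shows $(\psi_n*g)(x)\le \|\psi\|_{L^1}\,Mg(x)$ for nonnegative $g$; applying this to $g=\|f(\cdot)\|$ gives the claim. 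Under hypothesis (1), if $\phi$ is bounded with support in a ball $B_R$, then $|\phi|\le \|\phi\|_\infty \one_{B_R} =: \psi$, which is radially decreasing and integrable, so (1) reduces to (2). This also shows $\|\phi_n*f\|_{L^p(\R^d,w;X)}\le C[w]$-dependent constant $\cdot\|f\|_{L^p(\R^d,w;X)}$, since $M$ is bounded on $L^p(\R^d,w)$ because $w\in A_p$; in particular the operators $f\mapsto \phi_n*f$ are uniformly bounded on $L^p(\R^d,w;X)$.

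Next I would prove the convergence $\phi_n*f\to f$ in $L^p(\R^d,w;X)$. The natural route is a density argument: the uniform boundedness just established reduces convergence on all of $L^p(\R^d,w;X)$ to convergence on a dense subset. For $f\in C_c(\R^d;X)$ one has $\phi_n*f\to f$ uniformly with supports contained in a fixed compact set (using $\int\phi=1$ and uniform continuity of $f$), and since $w$ is locally integrable this upgrades to convergence in $L^p(\R^d,w;X)$. It remains to check that $C_c(\R^d;X)$ is dense in $L^p(\R^d,w;X)$; this is standard for $A_p$-weights (simple functions with sets of finite $w$-measure are dense, each such set is inner-regular, and $w\in A_p\subseteq A_\infty$ ensures bounded sets have finite $w$-measure after truncation). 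Then for general $f$ and $\varepsilon>0$, pick $g\in C_c$ with $\|f-g\|<\varepsilon$ and estimate $\|\phi_n*f-f\|\le \|\phi_n*(f-g)\| + \|\phi_n*g-g\| + \|g-f\| \le (C+1)\varepsilon + \|\phi_n*g-g\|$, and let $n\to\infty$.

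The one genuinely delicate point is the maximal-function domination in case (2) when $\psi$ is only assumed radially decreasing and integrable, not continuous or compactly supported: one must be slightly careful writing $\psi$ as an increasing limit (or integral) of normalized indicators of balls and interchanging with the convolution, invoking monotone convergence. Everything else is routine once the domination and the uniform $L^p(w)$-bounds are in hand. I expect the bulk of the write-up to be the verification of the pointwise bound and the density of $C_c(\R^d;X)$ in the weighted space; the abstract $3\varepsilon$-argument for convergence is immediate from uniform boundedness plus convergence on the dense set.
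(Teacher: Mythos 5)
Your pointwise domination $\|\phi_n*f\|\le C\,Mf$ and the deduction of uniform $L^p(\R^d,w;X)$ bounds from $A_p$-boundedness of $M$ agree with the paper's proof, but your route to the convergence is different. The paper quotes the a.e.\ convergence $\phi_n*f\to f$ directly (from \cite[Theorem 2.40, Corollary 2.41]{HNVW1}) and then concludes $L^p(\R^d,w;X)$-convergence by the dominated convergence theorem, with $C\,Mf\in L^p(\R^d,w)$ as the dominating function. You instead argue by density of $C_c(\R^d;X)$ in $L^p(\R^d,w;X)$, uniform boundedness of the operators $\phi_n*(\cdot)$, and a $3\varepsilon$ argument. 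This is a legitimate alternative in principle, and has the small advantage of not invoking the a.e.\ convergence result for general $f$.

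However, as written your argument on the dense class has a genuine gap in case (2). You claim that for $g\in C_c(\R^d;X)$ the functions $\phi_n*g$ have ``supports contained in a fixed compact set,'' and use this to upgrade uniform convergence to $L^p(\R^d,w;X)$-convergence. That is false unless $\phi$ is compactly supported: if $\psi$ is merely radially decreasing and in $L^1$ (e.g.\ a Gaussian), $\phi_n*g$ never has compact support, and uniform convergence alone does not control the weighted $L^p$ norm of the tail. You need to handle the tail, for instance by observing that $\phi_n*g\to g$ pointwise and $\|\phi_n*g\|\le C\,Mg\in L^p(\R^d,w)$ and applying dominated convergence on the dense class as well, or by splitting $\phi=\phi\one_{B_R}+\phi\one_{B_R^c}$ and estimating the small tail via the maximal function bound. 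Once you do either, you have essentially reproduced the paper's dominated convergence argument (restricted to nice $f$), which suggests the paper's direct route is cleaner: the a.e.\ convergence is the only extra ingredient, and it is cited, not reproved.

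A secondary point: the density of $C_c(\R^d;X)$ in $L^p(\R^d,w;X)$ that you invoke is true and the sketch you give is fine, but note the paper itself only proves density of Schwartz functions later (Lemma \ref{lem:densityCcH}), so if one wanted a self-contained argument at this early stage one would need to supply the density proof, whereas the paper's proof of Lemma \ref{lem:convolution} bypasses this entirely.
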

\begin{proof}
For convenience of the reader we include a short proof. By \cite[Theorem 2.40 and Corollary 2.41]{HNVW1} $\phi_n*f\to f$ almost everywhere and $\|\phi_n*f\|\leq \|\psi\|_{L^1(\R^d)} Mf$ almost everywhere, where $M$ denotes the Hardy--Littlewood maximal function. Therefore, the result follows from the dominated convergence theorem.
\end{proof}

\subsection{Fourier multipliers and UMD spaces}

Let $\Schw(\R^d;X)$ be the space of $X$-valued Schwartz functions and let $\Schw'(\R^d;X) = \calL(\Schw(\R^d), X)$ be the space of $X$-valued tempered distributions. For $m\in L^\infty(\R^d)$ let $T_m:\Schw(\R^d;X)\to \Schw'(\R^d;X)$ be the Fourier multiplier operator defined by
\[T_m f = \F^{-1} (m \wh{f}).\]
There are many known conditions under which $T_m$ is a bounded linear operator on $L^p(\R^d;X)$.
In the scalar-valued the set of all Fourier multiplier symbols on $L^{2}(\R^{d})$ for instance coincides with $L^{\infty}(\R^{d})$.
In the case $p\in (1, \infty)\setminus\{2\}$ a large set of multipliers for which $T_m$ is bounded is given by Mihlin's multiplier theorem. In the vector-valued case difficulties arise and geometric conditions on $X$ are needed already if $d = 1$ and $m(\xi) = \sign(\xi)$; in fact, in \cite{Bour,Bu83} it was shown that in this specific case the boundedness of $T_{m}$ on $L^{p}(\R;X)$ characterizes the UMD property of $X$.
Since the work of \cite{Bour,Bu83,McCon84} it is well-known that the right class of Banach spaces for vector-valued harmonic analysis is the class of UMD Banach spaces, as many of the classical results in harmonic analysis, such as the classical Mihlin multiplier theorem, have been extended to this setting.  We refer to \cite{Burk01, HNVW1, Rub} for details on UMD spaces and Fourier multiplier theorems.

All UMD spaces are reflexive. Conversely, all spaces in the reflexive range of the classical function spaces have UMD: e.g.: $L^p$, Bessel potential spaces, Besov spaces, Triebel-Lizorkin spaces, Orlicz spaces.

The following result is a weighted version of the Mihlin multiplier theorem which can be found in \cite[Proposition 3.1]{MeyVerpoint} and is a simple consequence of \cite{HH14}.
\begin{proposition}\label{prop:Mihlin}
Let $X$ be a UMD space, $p\in (1, \infty)$ and $w\in A_p$. Assume that $m\in C^{d+2}(\R^d\setminus\{0\})$ satisfies
\[C_{m} := \sup_{|\alpha| \leq d+2}\sup_{\xi \neq 0} |\xi|^{|\alpha|} |\partial^{\alpha} m(\xi)|<\infty.\]
Then $T_m$ is bounded on $L^p(\R^d,w;X)$ and has an operator norm that only depends $C_{m},d,p,X,[w]_{A_{p}}$.
\end{proposition}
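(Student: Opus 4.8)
The plan is to realise $T_m$ as a Calderón--Zygmund operator with a scalar-valued kernel and then quote the Banach space-valued weighted Calderón--Zygmund theory. For the unweighted input, note that $d+2$ is (far) more than $\lfloor d/2\rfloor + 1$ derivatives, so the classical vector-valued Mihlin multiplier theorem (see \cite{HNVW1}) already yields that $T_m$ is bounded on $L^{p_0}(\R^d;X)$ for any fixed $p_0\in(1,\infty)$, with norm controlled by $C_m$, $d$, $p_0$ and $X$; here only the UMD property of $X$ is used.

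Next I would verify the kernel estimates. Let $K = \F^{-1}m\in\Schw'(\R^d)$, and write $m = \sum_{j\in\Z} m\,\psi_j$ for a dyadic partition of unity with $\psi_j(\xi) = \psi(2^{-j}\xi)$. Integrating by parts in each dyadic piece and using the hypothesis $C_m<\infty$ through the bounds on $\partial^\alpha m$ for $|\alpha|\le d+2$, one obtains that on $\R^d\setminus\{0\}$ the distribution $K$ is represented by a locally integrable function satisfying the size bound $|K(x)| \lesssim_d C_m |x|^{-d}$ and the gradient bound $|\nabla K(x)| \lesssim_d C_m |x|^{-d-1}$, hence the Hörmander smoothness condition with constant $\lesssim_d C_m$. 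Moreover $T_m f(x) = \int_{\R^d} K(x-y)f(y)\ud y$ for $f\in\Schw(\R^d;X)$ and $x\notin\supp f$, which makes sense because $T_m f\in L^{p_0}(\R^d;X)\subseteq L^1_{\rm loc}(\R^d;X)$. Thus $T_m$ is a Calderón--Zygmund operator with a scalar kernel, the relevant constants depending only on $C_m$ and $d$.

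Finally, I would apply the vector-valued $A_p$ theorem for Calderón--Zygmund operators of \cite{HH14}: if $X$ is UMD, $T$ is bounded on $L^{p_0}(\R^d;X)$ for some $p_0\in(1,\infty)$, and $T$ is associated off the diagonal to a scalar kernel obeying the size and Hörmander conditions above, then $T$ extends boundedly to $L^p(\R^d,w;X)$ for every $p\in(1,\infty)$ and $w\in A_p$, with norm depending only on the kernel constants, $p$, $[w]_{A_p}$ and $X$. Taking $T = T_m$ gives the proposition with the asserted dependence of the constant on $C_m$, $d$, $p$, $X$ and $[w]_{A_p}$. Alternatively, once one weighted bound is available one could conclude by vector-valued Rubio de Francia extrapolation, but the Calderón--Zygmund route is the most direct.

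The genuine content --- and hence the \emph{hard part} --- is entirely inside the cited result \cite{HH14}, namely the sparse / local-oscillation domination of Banach space-valued singular integrals that turns a single unweighted estimate into the full $A_p$ range. In the reduction itself nothing is difficult; the only step requiring a little care is the routine but standard check that $T_m$ is a bona fide Calderón--Zygmund operator (the dyadic kernel estimates and the off-diagonal integral representation), and this is precisely where the hypothesis $C_m<\infty$ enters, through finitely many derivative bounds on $m$.
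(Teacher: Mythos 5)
Your proof is correct and is essentially the route the paper has in mind: the authors state the result by citing \cite[Proposition 3.1]{MeyVerpoint} and remarking that it is ``a simple consequence of \cite{HH14},'' which is exactly the reduction you carry out (unweighted vector-valued Mihlin for one $p_0$, dyadic kernel estimates showing $T_m$ is a Calder\'on--Zygmund operator with scalar kernel, then the weighted vector-valued CZ theorem of \cite{HH14}). The only point worth flagging is that the $\lfloor d/2\rfloor+1$ figure you quote is tangential --- what matters, and what you correctly verify, is that $d+2$ derivatives suffice both for the unweighted Mihlin theorem and for the gradient bound $|\nabla K(x)|\lesssim C_m|x|^{-d-1}$ after summing the dyadic pieces.
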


\section{Weighted function spaces\label{sec:weighted}}

In this section we present several results on weighted function spaces, which do not require the UMD property of the underlying Banach space (except in Proposition \ref{prop:W=H}).

\subsection{Definitions and basic properties}

For an open set $\Omega\subseteq \R^d$ let $\mathcal{D}(\Omega)$ denote the space compactly supported smooth functions on $\Omega$ equipped with its usual inductive limit topology.
For a Banach space $X$, let $\Distr'(\Omega;X) = \calL(\mathcal{D}(\Omega),X)$ be the space of $X$-valued distributions. For a distribution $u\in \Distr'(\Omega;X)$ and an open subset $\O_0\subseteq \O$, we define the restriction $u|_{\O_0}\in \Distr'(\Omega_0;X)$ as $u|_{\O_0}(f) = u(f)$ for $f\in \mathcal{D}(\Omega_{0})$.

For $p \in (1,\infty)$ and $w\in A_p$ let $W^{k,p}(\Omega,w;X)\subseteq \Distr'(\Omega;X)$ be the {\em Sobolev space} of all $f\in L^p(\Omega,w;X)$ with $\partial^\alpha f \in L^p(\Omega,w;X)$ for all $|\alpha|\leq k$ and set
\begin{align*}
\|f\|_{W^{k,p}(\Omega,w;X)} &= \sum_{|\alpha|\leq k} \|\partial^{\alpha} f\|_{L^p(\Omega,w;X)}, \\ [f]_{W^{k,p}(\Omega,w;X)} &= \sum_{|\alpha| = k} \|\partial^{\alpha} f\|_{L^p(\Omega,w;X)}.
\end{align*}
Here for $\alpha\in \N^d$, $\partial^{\alpha} = \partial_1^{\alpha_1} \ldots \partial_d^{\alpha_d}$.

Let $\mathcal{J}_{s}$ denote the Bessel potential operator of order $s\in \R$ defined by \[\mathcal{J}_{s} f = (1-\Delta)^{s/2} f := \F^{-1} (1+|\cdot|^2)^{s/2} \wh{f},\]
where $\wh{f}$ denotes the Fourier transform of $f$ and $\Delta = \sum_{j=1}^d \partial_{j}^2$. For $p\in (1, \infty)$, $s\in \R$ and $w\in A_p$ let $H^{s,p}(\R^d,w;X)\subseteq \Schw'(\R^d;X)$ denote the {\em Bessel potential space} of all $f\in  \Schw'(\R^d;X)$ for which $\mathcal{J}_{s} f\in L^p(\R^d,w;X)$ and set
\[\|f\|_{H^{s,p}(\R^d,w;X)} = \|\mathcal{J}_{s} f\|_{L^p(\R^d,w;X)}.\]

In the following lemma we collect some properties of the operators $\mathcal{J}_s$.
\begin{lemma}\label{lem:propGs}
Fix $s>0$. There exists a function $G_s:\R^d\to [0,\infty)$ such that $G_s\in L^1(\R^d)$ and $\mathcal{J}_{-s} f = G_s*f$ for all $f\in \Schw'(\R^d;X)$. Moreover, $G_s$ has the following properties:
\begin{enumerate}
\item\label{it:expnearinftya} For all $|y|\geq 2$, $G_s(y) \lesssim_{s,d} e^{-\frac{|y|}{2}}$.
\item\label{it:expnearinftyb}
For $|x|\leq 2$,
\[
G_{s}(x) \lesssim_{s,d} \left\{\begin{array}{ll}
|x|^{s-d}, & s\in (0,d), \\
1+\log(\tfrac{2}{|x|}), & s= d, \\
1, & s > d,
\end{array}\right.
\]
\item\label{it:hogered}
for all $s>k\geq 0$ and all $|\alpha|\leq k$, there exists a radially decreasing function $\phi\in L^1(\R^d)$  such that $|\partial^{\alpha} G_{s}|\leq \phi$ pointwise.
\end{enumerate}
In particular, if $d=1$, $p\in (1, \infty)$, $\gamma\in (-1, p-1)$ and $s > \frac{1+\gamma}{p}$, then $G_{s} \in L^{p'}(\R,w_{\gamma}')$.
\end{lemma}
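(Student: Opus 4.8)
The plan is to estimate the norm directly. Recalling that the $p$-dual weight of $w_\gamma$ is $w_\gamma'(t) = |t|^{-\gamma/(p-1)}$, and that $G_s \geq 0$, it suffices to show
\[
\int_{\R} G_s(t)^{p'}\, |t|^{-\gamma/(p-1)}\ud t < \infty .
\]
I would split this integral over the regions $\{|t| \geq 2\}$ and $\{|t| \leq 2\}$ and handle each with the pointwise bounds (\ref{it:expnearinftya}) and (\ref{it:expnearinftyb}) already established.

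On $\{|t| \geq 2\}$, part (\ref{it:expnearinftya}) gives $G_s(t)^{p'} \lesssim_{s,p} e^{-p'|t|/2}$, and since $|t|^{-\gamma/(p-1)}$ is dominated by a polynomial there, this contribution is finite for every choice of the parameters. On $\{|t| \leq 2\}$, using part (\ref{it:expnearinftyb}) with $d=1$, one distinguishes three regimes. If $s > 1$, then $G_s$ is bounded near the origin, so finiteness reduces to $\int_{|t|\leq 2} |t|^{-\gamma/(p-1)}\ud t < \infty$, i.e.\ to $\gamma/(p-1) < 1$, which is exactly the condition $\gamma < p-1$ contained in $\gamma \in (-1,p-1)$. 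If $s = 1$, then $G_s(t) \lesssim_{s} 1 + \log(2/|t|)$, and since $(1+\log(2/|t|))^{p'} \lesssim_{\eps} |t|^{-\eps}$ for every $\eps > 0$, choosing $\eps > 0$ with $\gamma/(p-1) + \eps < 1$ reduces this to the previous case. If $s \in (0,1)$, then $G_s(t)^{p'} \lesssim_{s,p} |t|^{(s-1)p'}$, so the integral converges if and only if $(s-1)p' - \gamma/(p-1) > -1$; multiplying by $p-1 > 0$ and using $p'(p-1) = p$, this is equivalent to $sp > 1+\gamma$, i.e.\ to the standing hypothesis $s > \tfrac{1+\gamma}{p}$. (Note that $\gamma \in (-1,p-1)$ forces $\tfrac{1+\gamma}{p} < 1$, so the cases $s \geq 1$ are automatically consistent with the hypothesis, and the binding constraint indeed occurs only for $s \in (0,1)$.)

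I do not expect a substantial obstacle here: the computation is elementary, and the only point requiring care is verifying that the exponent condition in the range $s \in (0,1)$ matches the borderline hypothesis $s > \tfrac{1+\gamma}{p}$ precisely, together with absorbing the logarithmic factor at $s = 1$ into an arbitrarily small power of $|t|$. One could alternatively deduce the statement from Sobolev-type embeddings for weighted Bessel potential spaces, but the direct estimate above is shorter and keeps the argument self-contained.
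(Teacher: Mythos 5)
Your direct estimate of the final ``in particular'' assertion is correct and matches the paper's argument in structure: you split at $|t|=2$, use \eqref{it:expnearinftya} for the tail, use \eqref{it:expnearinftyb} near the origin, and the exponent arithmetic confirms that $s>\tfrac{1+\gamma}{p}$ is precisely what is needed. The only cosmetic difference is that the paper reduces to $s\in(\tfrac{1+\gamma}{p},1)$ by remarking that the blow-up near zero worsens as $s$ decreases, whereas you treat the three regimes $s<1$, $s=1$, $s>1$ explicitly; both work.

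The gap is one of scope rather than correctness: the lemma also asserts the existence of $G_s$ and parts \eqref{it:expnearinftya}--\eqref{it:hogered}, and your proposal only addresses the final consequence. Parts \eqref{it:expnearinftya}--\eqref{it:expnearinftyb} and the existence of $G_s$ are cited from the standard reference, but part \eqref{it:hogered} needs its own argument and is the one you omit. The paper obtains it from the subordination representation
$G_s(x)=C_{s,d}\int_0^\infty e^{-t}e^{-|x|^2/(4t)}t^{(s-d)/2}\,dt/t$,
observing by induction that $\partial^\alpha G_s$ is a linear combination of terms $x^\beta G_{s-2j}(x)$ with $|\beta|\le j\le k$, whence for $|x|\le 2$ one gets a bound $|x|^{\varepsilon-d}$ for some $\varepsilon\in(0,d)$, for $|x|\ge 2$ exponential decay, and the two pieces are glued into the required radially decreasing $L^1$ majorant $\phi$. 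Without this, assertion \eqref{it:hogered}, which is used later in the paper (e.g.\ in Lemma \ref{lem:compHW}), remains unproved.
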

\begin{proof}
The fact that the positive function $G_s\in L^1(\R^d)$ exists,  together with \eqref{it:expnearinftya} and \eqref{it:expnearinftyb}, follows from \cite[Section 6.1.b]{GrafakosM}.

To prove \eqref{it:hogered}, we use the following representation of $G_s$ (see \cite[Section 6.1.b]{GrafakosM}):
\[G_s(x) = C_{s,d} \int_0^\infty e^{-t} e^{-\frac{|x|^2}{4t}} t^{\frac{s-d}{2}} \frac{\ud t}{t}.\]
By induction one sees that $\partial^{\alpha} G_s(x)$ is a linear combination of functions of the form $G_{s - 2j} (x) |x|^{\beta}$ with $|\beta|\leq j\leq k$. Therefore, by \eqref{it:expnearinftyb} for $|x|\leq 2$, $|\partial^{\alpha} G_s(x)|\lesssim_{s,d,\alpha} |x|^{\varepsilon-d}$ for some $\varepsilon\in (0,d)$. On the other hand for $|x|\geq 2$, $|\partial^{\alpha} G_s(x)|\lesssim_{s,d,\alpha} |x|^{\beta} e^{-\frac{|x|}{2}} \lesssim_{d,s,k}  e^{-\frac{|x|}{4}}$. Now the function $\phi(x) = C_1|x|^{\varepsilon-d}$ for $|x|\leq 2$ and $\phi(x) = C_2e^{-\frac{|x|}{4}}$ for certain constants $C_1, C_2>0$. satisfies the required conditions.

To prove the final assertion for $d=1$, note that the blow-up behaviour near $0$ gets worse as $s$ decreases. Therefore, without loss of generality we may assume that $s \in (\frac{1+\gamma}{p},1)$, in which case \eqref{it:expnearinftyb} yields
\[
|G_{s}(x)|^{p'}w'_{\gamma}(x) \lesssim_{s,p,\gamma} |x|^{\frac{(s-1)p-\gamma}{p-1}} =
|x|^{-1+\frac{p}{p-1}(s-\frac{1+\gamma}{p})} \quad \mbox{for $|x| \leq 2$}.
\]
which is integrable. Integrability, for $|x|>2$, is clear from \eqref{it:expnearinftya}.
\end{proof}

The following result is proved in \cite[Proposition 3.2 and 3.7]{MeyVerpoint} by a direct application of Proposition \ref{prop:Mihlin}.
\begin{proposition}\label{prop:W=H}
Let $X$ be a UMD space, $p\in (1, \infty)$, $k\in \N_0$, $w\in A_p$.
Then $H^{k,p}(\R^d,w;X) = W^{k,p}(\R^d,w;X)$ with norm equivalence only depending on $d$, $X$, $p$, $k$ and $[w]_{A_p}$.
\end{proposition}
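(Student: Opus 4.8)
The plan is to reduce both inclusions to the weighted Mihlin multiplier theorem, Proposition \ref{prop:Mihlin}. As a preliminary step I would record that $A_p$-weights (together with their $p$-dual weights) have at most polynomial growth, so that $L^p(\R^d,w;X)$ --- and hence $W^{k,p}(\R^d,w;X)$ --- embeds continuously into $\Schw'(\R^d;X)$; thus every distribution occurring below has a Fourier transform and all identities are understood in $\Schw'(\R^d;X)$. I will also use repeatedly that any $m\in C^\infty(\R^d)$ which is a classical symbol of order $\leq 0$, i.e.\ $|\partial^\alpha m(\xi)|\lesssim_{\alpha,d}(1+|\xi|)^{-|\alpha|}$ for all $\alpha$, satisfies the hypothesis $C_m<\infty$ of Proposition \ref{prop:Mihlin}; this covers $(1+|\cdot|^2)^{-s/2}$ for $s\geq 0$ and $\xi\mapsto(i\xi)^\alpha(1+|\xi|^2)^{-|\alpha|/2}$.

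For the inclusion $H^{k,p}(\R^d,w;X)\hookrightarrow W^{k,p}(\R^d,w;X)$ I would, for $|\alpha|\leq k$, introduce the symbol $n_\alpha(\xi):=(i\xi)^\alpha(1+|\xi|^2)^{-k/2}$, which is classical of order $|\alpha|-k\leq 0$, and observe that $\partial^\alpha f=T_{n_\alpha}\mathcal{J}_k f$. Proposition \ref{prop:Mihlin} then yields $\|\partial^\alpha f\|_{L^p(\R^d,w;X)}\lesssim\|\mathcal{J}_k f\|_{L^p(\R^d,w;X)}=\|f\|_{H^{k,p}(\R^d,w;X)}$ with a constant depending only on $d,p,k,X,[w]_{A_p}$ (the case $\alpha=0$ shows in particular that $f\in L^p(\R^d,w;X)$), and summing over $|\alpha|\leq k$ gives the embedding.

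For the converse inclusion $W^{k,p}(\R^d,w;X)\hookrightarrow H^{k,p}(\R^d,w;X)$ I would induct on $k$, the case $k=0$ being trivial. For $k=1$ I would use the pointwise identity
\[
(1+|\xi|^2)^{1/2}=(1+|\xi|^2)^{-1/2}+\sum_{j=1}^d\big(-i\xi_j(1+|\xi|^2)^{-1/2}\big)(i\xi_j),
\]
in which $m_0(\xi):=(1+|\xi|^2)^{-1/2}$ and $m_j(\xi):=-i\xi_j(1+|\xi|^2)^{-1/2}$ are classical symbols of order $\leq 0$; hence $\mathcal{J}_1 f=T_{m_0}f+\sum_{j=1}^d T_{m_j}(\partial_j f)$ and Proposition \ref{prop:Mihlin} gives $\|\mathcal{J}_1 f\|_{L^p(\R^d,w;X)}\lesssim\|f\|_{W^{1,p}(\R^d,w;X)}$. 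For the inductive step, given $k\geq 2$ and $f\in W^{k,p}(\R^d,w;X)$, I would note that $f$ and each $\partial_j f$ belong to $W^{k-1,p}(\R^d,w;X)$, so that by the induction hypothesis $\mathcal{J}_{k-1}f\in L^p(\R^d,w;X)$ and $\partial_j\mathcal{J}_{k-1}f=\mathcal{J}_{k-1}\partial_j f\in L^p(\R^d,w;X)$; thus $\mathcal{J}_{k-1}f\in W^{1,p}(\R^d,w;X)$, and applying the case $k=1$ to it together with $\mathcal{J}_1\mathcal{J}_{k-1}=\mathcal{J}_k$ gives $\mathcal{J}_k f\in L^p(\R^d,w;X)$ with the asserted bound, the constant at each stage depending only on $d,p,k,X,[w]_{A_p}$.

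The only step that requires genuine, if entirely routine, care is the symbol bookkeeping behind the preliminary claim: verifying via the Leibniz rule that $(1+|\cdot|^2)^{\pm 1/2}$, $(1+|\cdot|^2)^{-k/2}$ and monomials are classical symbols of the stated orders, so that the operators above fall under Proposition \ref{prop:Mihlin}. An equivalent but slightly longer route for the converse inclusion, which avoids the induction, is to decompose $(1+|\xi|^2)^{k/2}=\sum_{|\beta|\leq k}i^{-|\beta|}(1+|\xi|^2)^{k/2}\xi^\beta P(\xi)^{-1}(i\xi)^\beta$ with $P(\xi):=\sum_{|\beta|\leq k}\xi^{2\beta}\eqsim(1+|\xi|^2)^k$, and to check that each symbol $(1+|\xi|^2)^{k/2}\xi^\beta P(\xi)^{-1}$ is again classical of nonpositive order, using that $1/P$ is classical of order $-2k$.
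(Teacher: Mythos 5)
Your proof is correct and is exactly the ``direct application of Proposition \ref{prop:Mihlin}'' that the paper alludes to when it defers to \cite[Propositions 3.2 and 3.7]{MeyVerpoint}: the forward embedding via the symbols $(i\xi)^\alpha(1+|\xi|^2)^{-k/2}$ and the converse via the $k=1$ identity $\mathcal{J}_1 = T_{m_0}+\sum_j T_{m_j}\partial_j$ together with induction on $k$ are both sound, and the constants only depend on the advertised parameters. The one preliminary point you invoke loosely --- that $L^p(\R^d,w;X)\hookrightarrow\Schw'(\R^d;X)$ so the Fourier-side manipulations make sense --- is indeed a standard consequence of $\Schw(\R^d)\hookrightarrow L^{p'}(\R^d,w')$ and is recorded in the paper's Lemma \ref{lem:densityCcH}, so this is not a gap.
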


The UMD property is necessary in Proposition \ref{prop:W=H}
(see \cite[Theorem 5.6.12]{HNVW1}). Sometimes it can be avoided by instead using the following simple embedding result which holds for any Banach space. The sharper version $W^{k,p}(\R^d,w;X)\hookrightarrow H^{s,p}(\R^d,w;X)$ if $s<k$ and $k\in \N_0$. can be obtained from \cite[Propositions 3.11 and 3.12]{MeyVersharp} but is more complicated.

\begin{lemma}\label{lem:compHW}
Let $X$ be a Banach space, $p\in (1, \infty)$, $k\in \N_0$, $s\in (k,\infty)$ and $w\in A_p$. Then the following continuous embeddings hold
\[W^{2k,p}(\R^d,w;X)\hookrightarrow H^{2k,p}(\R^d,w;X), \ \ H^{s,p}(\R^d,w;X)\hookrightarrow W^{k,p}(\R^d,w;X),\]
with embedding constants which only depend on $d, s, k$ and $[w]_{A_p}$.
\end{lemma}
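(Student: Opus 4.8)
The plan is to prove both embeddings via Fourier multiplier arguments, but crucially using only Proposition \ref{prop:Mihlin} in a form that does \emph{not} require the UMD property—this is possible because the relevant multipliers will turn out to be convolutions with integrable kernels (via Lemma \ref{lem:propGs}) or polynomial-type expressions, rather than genuine singular integrals. For the first embedding $W^{2k,p}(\R^d,w;X) \hookrightarrow H^{2k,p}(\R^d,w;X)$, I would write $\mathcal{J}_{2k} f = (1-\Delta)^k f = \sum_{|\alpha| \le k} c_\alpha \partial^{2\alpha} f$ for suitable real constants $c_\alpha$ coming from the binomial expansion of $(1 + |\xi|^2)^k$; taking $L^p(\R^d,w;X)$-norms and using the triangle inequality bounds $\|f\|_{H^{2k,p}}$ by a constant times $\|f\|_{W^{2k,p}}$, with the constant depending only on $d, k$ (no weight or Banach space geometry needed here at all). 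Note this is exactly why the statement is phrased with the even exponent $2k$: the symbol $(1+|\xi|^2)^k$ is a polynomial precisely when the order is even.

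For the second embedding $H^{s,p}(\R^d,w;X) \hookrightarrow W^{k,p}(\R^d,w;X)$ with $s > k$, I would fix a multi-index $\alpha$ with $|\alpha| \le k$ and estimate $\|\partial^\alpha f\|_{L^p(\R^d,w;X)}$ in terms of $\|f\|_{H^{s,p}} = \|\mathcal{J}_s f\|_{L^p(\R^d,w;X)}$. Write $\partial^\alpha f = \partial^\alpha \mathcal{J}_{-s}(\mathcal{J}_s f) = (\partial^\alpha G_s) * (\mathcal{J}_s f)$, using the representation $\mathcal{J}_{-s} g = G_s * g$ from Lemma \ref{lem:propGs}. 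By part \eqref{it:hogered} of that lemma (applicable since $s > k \ge |\alpha|$), there is a radially decreasing $\phi \in L^1(\R^d)$ with $|\partial^\alpha G_s| \le \phi$ pointwise. Then $\|(\partial^\alpha G_s)*(\mathcal{J}_s f)(x)\| \le (\phi * \|\mathcal{J}_s f\|)(x) \le \|\phi\|_{L^1} \cdot M(\mathcal{J}_s f)(x)$ almost everywhere, invoking the pointwise domination in Lemma \ref{lem:convolution} (condition (2)). Since $w \in A_p$, the Hardy–Littlewood maximal operator $M$ is bounded on $L^p(\R^d,w)$, so $\|\partial^\alpha f\|_{L^p(\R^d,w;X)} \lesssim \|\phi\|_{L^1} \|M\|_{L^p(w)\to L^p(w)} \|\mathcal{J}_s f\|_{L^p(\R^d,w;X)}$. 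Summing over $|\alpha| \le k$ gives the embedding with a constant depending only on $d, s, k, [w]_{A_p}$, as claimed.

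I do not expect a serious obstacle here: the lemma is genuinely elementary, and all the work has been front-loaded into Lemma \ref{lem:propGs}\eqref{it:hogered} and the maximal function bound. The one point requiring a little care is justifying the identity $\partial^\alpha \mathcal{J}_{-s} g = (\partial^\alpha G_s) * g$ and that the convolutions make sense as Bochner integrals—this follows because $L^p(\R^d,w;X) \subseteq L^1_{\mathrm{loc}}(\R^d;X)$ (noted in the Preliminaries) together with the global decay of $\phi$, so convolution with $\phi$ is well-defined and the vector-valued Minkowski/maximal estimate applies. A second minor subtlety is that one should restrict attention to a dense class (say $\Schw(\R^d;X)$, or functions for which both sides are a priori finite) to perform these manipulations and then extend by density; since the target estimate is a norm bound this causes no trouble.
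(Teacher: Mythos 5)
Your proposal is correct and follows essentially the same route as the paper: the first embedding via expanding $(1-\Delta)^k$ into a finite linear combination of derivatives $\partial^{2\alpha}$ of order at most $2k$ and applying the triangle inequality, and the second via $\partial^\alpha f = (\partial^\alpha G_s) * (\mathcal{J}_s f)$, the pointwise domination $|\partial^\alpha G_s| \le \phi$ from Lemma~\ref{lem:propGs}\eqref{it:hogered}, the maximal-function bound $\phi * \|\mathcal{J}_s f\| \le C_\phi M(\|\mathcal{J}_s f\|)$, and the $A_p$-boundedness of $M$. The opening framing about ``Proposition~\ref{prop:Mihlin} without UMD'' is a red herring, since you immediately discard it in favour of the convolution/maximal-function argument, which is exactly what the paper uses; the density remark at the end is a harmless extra precaution.
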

\begin{proof}
The first embedding is immediate from $J_{2k} f = (1-\Delta)^{k} f$ and Leibniz' rule. For the second embedding let $f\in H^{s,p}(\R^d,w;X)$ and write $f_{s} = J_{s} f\in L^p(\R^d,w;X)$. By Lemma \ref{lem:propGs} \eqref{it:hogered} and Lemma \ref{lem:convolution}, for all $|\alpha|\leq k$,
\[\|\partial^{\alpha}f\|_X = \|\partial^{\alpha} G_{s} *f_{s}\|_X\leq \phi*\|f_{s}\|_X\leq C_{\phi} M(\|f_{s}\|_X),\]
where $\phi\in L^1(\R^d)$ is a radially decreasing function depending on $\alpha$, $k$ and $s$.
Therefore, by the boundedness of the Hardy--Littlewood maximal function, we have $\partial^{\alpha}f\in L^p(\R^d,w;X)$ with
\[\|\partial^{\alpha} f\|_{L^p(\R^d,w;X)} \lesssim_{p,[w]_{A_p}}  \|f_{s}\|_{L^p(\R^d,w;X)} = \|f\|_{H^{s,p}(\R^d,w;X)}.\]
Now the result follows by summation over all $\alpha$.
\end{proof}

We proceed with two density results.
\begin{lemma}\label{lem:densityCcH}
Let $X$ be a Banach space, $p\in (1, \infty)$, $s\in \R$ and $w\in A_p$.
Then $\Schw(\R^d;X)\hookrightarrow H^{s,p}(\R^d,w;X) \hookrightarrow \Schw'(\R^d;X)$. Moreover, $C^\infty_c(\R^d)\otimes X$ is dense in $H^{s,p}(\R^d,w;X)$.
\end{lemma}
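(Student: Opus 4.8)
The plan is to establish the two continuous embeddings first and then deduce the density statement by transporting everything through the operators $\mathcal{J}_{\pm s}$, using that $\mathcal{J}_s$ is an isometric isomorphism of $H^{s,p}(\R^d,w;X)$ onto $L^p(\R^d,w;X)$ with inverse $\mathcal{J}_{-s}$ (this is immediate from the definition together with $\mathcal{J}_s\mathcal{J}_{-s}=\mathcal{J}_{-s}\mathcal{J}_s=\mathrm{id}$ on $\Schw'(\R^d;X)$). The key observation for the embeddings is that the symbol $\xi\mapsto(1+|\xi|^2)^{s/2}$ is smooth and, together with all its derivatives, of at most polynomial growth; hence $\mathcal{J}_{\pm s}=\F^{-1}\circ(\text{mult.})\circ\F$ maps $\Schw(\R^d;X)$ continuously into itself and defines a continuous operator on $\Schw'(\R^d;X)$.

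For $\Schw(\R^d;X)\hookrightarrow H^{s,p}(\R^d,w;X)$ it then remains to prove $\Schw(\R^d;X)\hookrightarrow L^p(\R^d,w;X)$ continuously: an $A_p$-weight is doubling, so $w(B(0,R))\lesssim R^{D}$ for some $D>0$, whence $\int_{\R^d}w(x)(1+|x|)^{-N}\ud x<\infty$ for $N$ large and $\|g\|_{L^p(\R^d,w;X)}$ is dominated by a Schwartz seminorm of $g$; composing with the continuity of $\mathcal{J}_s$ on $\Schw(\R^d;X)$ gives the claim. For $H^{s,p}(\R^d,w;X)\hookrightarrow\Schw'(\R^d;X)$, note that $w'=w^{-1/(p-1)}\in A_{p'}$ is likewise doubling, so $\|\phi\|_{L^{p'}(\R^d,w')}$ is controlled by a Schwartz seminorm of $\phi$; together with the weighted Hölder inequality \eqref{eq:fgdualityweight} this yields $L^p(\R^d,w;X)\hookrightarrow\Schw'(\R^d;X)$ continuously, and then $f=\mathcal{J}_{-s}\mathcal{J}_s f$ together with the continuity of $\mathcal{J}_{-s}$ on $\Schw'(\R^d;X)$ finishes the argument.

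For the density, fix $f\in H^{s,p}(\R^d,w;X)$ and $\eps>0$; by the isomorphism it suffices to approximate $g:=\mathcal{J}_s f\in L^p(\R^d,w;X)$ in the $L^p(\R^d,w;X)$-norm by $\mathcal{J}_s$ of an element of $C^\infty_c(\R^d)\otimes X$. One first approximates $g$ in $L^p(\R^d,w;X)$ by some $h\in C^\infty_c(\R^d)\otimes X$: simple functions built from sets of finite $w$-measure are dense in the Bochner space $L^p(\R^d,w;X)$, and any such set may be intersected with a large ball (continuity of the measure) to make it bounded, hence of finite Lebesgue measure; for a bounded set $A$ and $x\in X$ we have $\one_A\otimes x\in L^p(\R^d;X)$, so Lemma \ref{lem:convolution} applied with a fixed $\phi\in C^\infty_c(\R^d)$ with $\int\phi\ud x=1$ yields $\phi_n*(\one_A\otimes x)=(\phi_n*\one_A)\otimes x\in C^\infty_c(\R^d)\otimes X$ converging to $\one_A\otimes x$ in $L^p(\R^d,w;X)$. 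Then $\mathcal{J}_{-s}h$ is $\eps$-close to $f$ in $H^{s,p}(\R^d,w;X)$, but since $\mathcal{J}_{-s}$ only preserves $\Schw(\R^d)\otimes X$ (not $C^\infty_c(\R^d)\otimes X$) one must repair the support: fix $\chi\in C^\infty_c(\R^d)$ with $\chi\equiv 1$ near $0$; a routine Schwartz-seminorm estimate shows $\chi(\cdot/n)v\to v$ in $\Schw(\R^d;X)$ for every $v\in\Schw(\R^d;X)$, so by the embedding just proved $\chi(\cdot/n)\mathcal{J}_{-s}h\to\mathcal{J}_{-s}h$ in $H^{s,p}(\R^d,w;X)$ while $\chi(\cdot/n)\mathcal{J}_{-s}h\in C^\infty_c(\R^d)\otimes X$. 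Choosing $n$ large and combining the two approximations proves the density.

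There is no deep difficulty here; the work lies in assembling standard facts with care. The point that needs the most attention is keeping track of compact support throughout the mollification and cutoff steps, so that the approximants genuinely land in $C^\infty_c(\R^d)\otimes X$ and not merely in $\Schw(\R^d)\otimes X$, together with the polynomial-growth estimates for $A_p$-weights (and for $w'$) and for the symbols of $\mathcal{J}_{\pm s}$ that make the two Schwartz embeddings continuous.
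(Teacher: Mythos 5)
Your proof is correct and follows essentially the same strategy as the paper: reduce the embeddings to $s=0$ via the $\mathcal{J}_{\pm s}$-isomorphism, exploit that $A_p$-weights (and their $p$-duals) grow at most polynomially to control $L^p(w)$- and $L^{p'}(w')$-norms by Schwartz seminorms, and obtain density by approximating in $L^p(w;X)$ and transporting via $\mathcal{J}_{-s}$, repairing compact support with a cutoff. The only cosmetic difference is the order of operations in the density step — the paper first establishes density of $\Schw(\R^d)\otimes X$ in $H^{s,p}(\R^d,w;X)$ (citing density of $\Schw$ in $L^p(w)$) and then invokes density of $C^\infty_c(\R^d)$ in $\Schw(\R^d)$ combined with the first embedding, whereas you approximate by $C^\infty_c(\R^d)\otimes X$ in $L^p(w;X)$ directly and do the cutoff after applying $\mathcal{J}_{-s}$; both routes rely on exactly the same ingredients and the same cutoff mechanism, so this is the same argument with details unpacked where the paper cites references.
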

\begin{proof}
First we prove that $\Schw(\R^d;X)\hookrightarrow H^{s,p}(\R^d,w;X)$. It suffices to prove this in the case $s=0$ by continuity of $\mathcal{J}_{s} = (1-\Delta)^{s/2}$ on $\Schw(\R^d;X)$. In the case $s=0$, the continuity of the embedding follows from
\begin{align*}
\|f\|_{L^p(\R^d,w;X)}& \leq \|(1+|x|^2)^{-n}\|_{L^p(\R^d,w)}  \|(1+|x|^2)^{n} f\|_{L^\infty(\R^d;X)}
\\ & \lesssim_{d,n,p,w} \sum_{|\alpha|\leq 2n} \sup_{x\in \R^d}\|x^{\alpha} f(x)\|
\end{align*}
for $n\in \N$  with $n \geq dp$ (see \cite[Lemma~4.5]{MeyVersharp}).

To prove the density assertion note that $L^p(\R^d,w)\otimes X$ is dense in $L^p(\R^d,w;X)$ and $\Schw(\R^d)$ is dense in $L^p(\R^d,w)$ (see \cite[Exercise 9.4.1]{GrafakosM})
it follows that $\Schw(\R^d)\otimes X$ is dense in $L^p(\R^d,w;X)$. Since $J^{-s}$ leaves $\Schw(\R^d)$ invariant, also $\Schw(\R^d)\otimes X$ is dense in $H^{s,p}(\R^d,w;X)$.
Combining this with $\Schw(\R^d;X)\hookrightarrow H^{s,p}(\R^d,w;X)$ and the fact that $C^\infty_c(\R^d)$ is dense in $\Schw(\R^d)$ (see \cite[Lemma~14.7]{Duistermaat&Kolk_distributies}) we obtain the desired density assertion.

To prove the embedding $H^{s,p}(\R^d,w;X) \hookrightarrow \Schw'(\R^d;X)$ it suffices again to consider $s=0$. In this case from \eqref{eq:fgdualityweight} and $\Schw(\R^d) {\hookrightarrow} L^{p'}(\R^d,w')$ densely, we deduce
\[L^{p}(\R^d,w;X) \hookrightarrow \calL(L^{p'}(\R^d,w'), X)\hookrightarrow \calL(\Schw(\R^d), X) = \Schw'(\R^d;X).\]
\end{proof}

\begin{lemma}\label{lem:densityCcW}
Let $X$ be a Banach space, $p\in (1, \infty)$, $k \in \N$ and $w\in A_p$.
Then $\Schw(\R^d;X)\hookrightarrow W^{k,p}(\R^{d},w;X) \hookrightarrow \Schw'(\R^d;X)$. Moreover, $C^\infty_c(\R^d)\otimes X$ is dense in $W^{k,p}(\R^{d},w;X)$.
\end{lemma}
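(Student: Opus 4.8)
The plan is to mirror the proof of Lemma~\ref{lem:densityCcH}: deduce the two continuous embeddings from the case $s=0$ treated there, and then prove density by truncation, mollification and approximation by simple functions. For $\Schw(\R^d;X)\hookrightarrow W^{k,p}(\R^d,w;X)$ one uses that $f\mapsto\partial^\alpha f$ is continuous on $\Schw(\R^d;X)$ for each $\alpha$; composing with the continuous embedding $\Schw(\R^d;X)\hookrightarrow L^p(\R^d,w;X)$ from the proof of Lemma~\ref{lem:densityCcH} and summing over $|\alpha|\le k$ gives the claim. For $W^{k,p}(\R^d,w;X)\hookrightarrow\Schw'(\R^d;X)$ one combines the trivial inclusion $W^{k,p}(\R^d,w;X)\hookrightarrow L^p(\R^d,w;X)$ with $L^p(\R^d,w;X)\hookrightarrow\Schw'(\R^d;X)$, again from Lemma~\ref{lem:densityCcH}.

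For the density statement, fix $f\in W^{k,p}(\R^d,w;X)$ and $\eps>0$. First truncate: with $\chi\in C_c^\infty(\R^d)$, $\chi\equiv1$ near $0$, and $\chi_R(x)=\chi(x/R)$, the Leibniz rule writes $\partial^\alpha(\chi_R f)-\partial^\alpha f$ as $(\chi_R-1)\partial^\alpha f$ plus a linear combination of terms $(\partial^\beta\chi_R)(\partial^{\alpha-\beta}f)$ with $0\neq\beta\le\alpha$; the first vanishes in $L^p(\R^d,w;X)$ as $R\to\infty$ by dominated convergence, while each remaining term is bounded in $L^p(\R^d,w;X)$ by $CR^{-1}\|\partial^{\alpha-\beta}f\|_{L^p(\{|x|\ge R\},w;X)}\to0$. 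Hence we may assume $f$ has compact support. Next mollify: with $\phi\in C_c^\infty(\R^d)$, $\int\phi\,dx=1$, and $\phi_n(x)=n^d\phi(nx)$, the function $\phi_n*f$ is smooth with compact support, and since $\partial^\alpha(\phi_n*f)=\phi_n*\partial^\alpha f$, Lemma~\ref{lem:convolution} applied to each $\partial^\alpha f\in L^p(\R^d,w;X)$ gives $\phi_n*f\to f$ in $W^{k,p}(\R^d,w;X)$.

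It then remains to approximate, for $n$ fixed, the smooth compactly supported function $\phi_n*f$ by elements of $C_c^\infty(\R^d)\otimes X$. Choose a simple function $s=\sum_l\one_{E_l}\otimes x_l$ with $E_l$ bounded measurable sets such that $\|f-s\|_{L^p(\R^d,w;X)}$ is arbitrarily small (simple functions are dense in the Bochner space $L^p(\R^d,w;X)$). Then $\phi_n*s=\sum_l(\phi_n*\one_{E_l})\otimes x_l\in C_c^\infty(\R^d)\otimes X$, while for every $|\alpha|\le k$ the distribution $\partial^\alpha(\phi_n*(f-s))=(\partial^\alpha\phi_n)*(f-s)$ is pointwise dominated by $C_nM(\|f-s\|_X)$, because $\partial^\alpha\phi_n$ is bounded with compact support (this is the maximal function estimate used in the proof of Lemma~\ref{lem:convolution}); since $w\in A_p$ the Hardy--Littlewood maximal operator is bounded on $L^p(\R^d,w)$, whence $\|\phi_n*(f-s)\|_{W^{k,p}(\R^d,w;X)}\lesssim_{n,[w]_{A_p}}\|f-s\|_{L^p(\R^d,w;X)}$. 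Chaining the three steps produces an element of $C_c^\infty(\R^d)\otimes X$ within $\eps$ of $f$. I do not expect a genuine obstacle, the argument being a routine weighted adaptation of the unweighted case; the one point to watch is the interaction with the weight, namely that convolution against the fixed kernels $\phi_n$ and $\partial^\alpha\phi_n$ is bounded on $L^p(\R^d,w;X)$ (via the pointwise maximal-function domination together with $w\in A_p$), and that the commutator terms in the truncation step decay because the $L^p(\R^d,w;X)$-mass of $\partial^{\alpha-\beta}f$ on $\{|x|\ge R\}$ tends to $0$.
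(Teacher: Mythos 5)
Your two embeddings are obtained exactly as in the paper (differentiate and reduce to the $k=0$ case already handled in Lemma~\ref{lem:densityCcH}), so that part coincides. Your density argument, however, is genuinely different from the paper's and equally correct. The paper mollifies to show $W^{\infty,p}(\R^d,w;X)$ is dense in $W^{k,p}(\R^d,w;X)$, then routes through the Bessel potential scale: by Lemma~\ref{lem:compHW}, $W^{2k+2,p}\hookrightarrow H^{2k+2,p}\hookrightarrow H^{k+1,p}\hookrightarrow W^{k,p}$, so $H^{k+1,p}(\R^d,w;X)$ is densely and continuously embedded in $W^{k,p}(\R^d,w;X)$, and the density of $C^\infty_c(\R^d)\otimes X$ in $H^{k+1,p}$ (Lemma~\ref{lem:densityCcH}, which exploits that $\mathcal{J}_{-s}$ preserves $\Schw(\R^d)$) then transfers to $W^{k,p}$. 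You instead give a direct truncate--mollify--approximate argument staying entirely within the Sobolev scale: cut off by $\chi_R$ (with the Leibniz commutator terms decaying since $\partial^\beta\chi_R$ is $O(R^{-|\beta|})$ and supported where $|x|\gtrsim R$), mollify via Lemma~\ref{lem:convolution}, and then replace $f$ by a compactly supported simple approximant $s$ so that $\phi_n*s\in C^\infty_c(\R^d)\otimes X$, controlling the $W^{k,p}$-error by the $L^p$-error via the maximal-function bound for the fixed kernels $\partial^\alpha\phi_n$ and $w\in A_p$. Your route is more elementary and self-contained (no Bessel potential spaces, no Lemma~\ref{lem:compHW}); the paper's is shorter given that the $H^{s,p}$ machinery is already in place. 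Both arguments are sound.
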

\begin{proof}
The case $k=0$ follows from Lemma~\ref{lem:densityCcH}
and the case $k\geq 1$ follow by differentiation.

Let $\phi\in C^\infty_c(\R^d)$ be such that $\int_{\R^d} \phi \ud x = 1$ and define $\phi_{n}:=n^{d}\phi(n\,\cdot\,)$ for every $n \in \N$.
Then, by Lemma \ref{lem:convolution} and standard properties of convolutions, $f_n:=\phi_n*f\to f$ in $W^{k,p}(\R^d,w;X)$ as $n \to \infty$ with $\phi_{n} * f \in W^{\infty,p}(\R^{d},w;X) = \bigcap_{l \in \N}W^{l,p}(\R^{d},w;X)$.
In particular, $W^{2k+2,p}(\R^d,w;X)$ is dense in $W^{k,p}(\R^{d},w;X)$.
This yields $H^{k+1,p}(\R^d,w;X) \stackrel{d}{\hookrightarrow} W^{k,p}(\R^{d},w;X)$ by Lemma~\ref{lem:compHW}.
The density of $C^\infty_c(\R^d)\otimes X$ in $W^{k,p}(\R^{d},w;X)$ now follows from Lemma~\ref{lem:densityCcH}.
\end{proof}

\begin{lemma}\label{lem:convolution;Bessel_pot_space}
Let $X$ be a Banach space, $p\in (1, \infty)$, $s\in \R$ and $w\in A_p$.
Assume $\phi\in C^{\infty}_{c}(\R)$ with $\int \phi \ud x = 1$. Let $\phi_n(x) = n^d\phi(nx)$.
Then, for all $f\in H^{s,p}(\R^d,w;X)$,
\[\|\phi_n*f\|_{H^{s,p}(\R^d,w;X)} \lesssim_{s,p,[w],d} \|f\|_{H^{s,p}(\R^d,w;X)}\]
with $\phi_n*f\to f$ in $H^{s,p}(\R^d,w;X)$ as $n\to \infty$ with $\phi_{n}*f \in H^{\infty,p}(\R^d,w;X)= \bigcap_{t \in \R} H^{t,p}(\R^d,w;X)$.
\end{lemma}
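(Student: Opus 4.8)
The plan is to reduce every claim to the corresponding statement for convolutions in $L^{p}(\R^{d},w;X)$ — that is, to Lemma \ref{lem:convolution} — by commuting the Bessel potential operator past the mollification. The key algebraic fact is that, since $\phi_{n}\in C^{\infty}_{c}(\R^{d})\subseteq\Schw(\R^{d})$, for every $g\in\Schw'(\R^{d};X)$ the convolution $\phi_{n}*g$ is a tempered distribution with $\F(\phi_{n}*g)=\wh{\phi_{n}}\,\F g$, and multiplication by the Schwartz function $\wh{\phi_{n}}$ commutes with multiplication by the smooth, polynomially bounded function $(1+|\cdot|^{2})^{t/2}$. Hence, for $f\in\Schw'(\R^{d};X)$,
\[
\mathcal{J}_{t}(\phi_{n}*f)=(\mathcal{J}_{t-s}\phi_{n})*\mathcal{J}_{s}f\qquad\text{in }\Schw'(\R^{d};X),\quad t\in\R,
\]
and in particular $\mathcal{J}_{s}(\phi_{n}*f)=\phi_{n}*\mathcal{J}_{s}f$.

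First I would treat the norm bound and the convergence. Set $g:=\mathcal{J}_{s}f\in L^{p}(\R^{d},w;X)$, so that $\|f\|_{H^{s,p}(\R^{d},w;X)}=\|g\|_{L^{p}(\R^{d},w;X)}$ and $\mathcal{J}_{s}(\phi_{n}*f)=\phi_{n}*g$. Since $\phi$ is bounded and compactly supported (and so admits a radially decreasing $L^{1}$-majorant), Lemma \ref{lem:convolution} applies to $g$ and yields the pointwise estimate $\|\phi_{n}*g\|_{X}\leq C\,M(\|g\|_{X})$ a.e.\ together with $\phi_{n}*g\to g$ in $L^{p}(\R^{d},w;X)$; for the latter I would note that $L^{p}(\R^{d},w;X)\subseteq L^{1}_{\mathrm{loc}}$, so $\phi_{n}*g\to g$ a.e., while $C\,M(\|g\|_{X})+\|g\|_{X}$ is an $L^{p}(\R^{d},w)$-majorant, whence dominated convergence. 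Using that $M$ is bounded on $L^{p}(\R^{d},w)$ because $w\in A_{p}$, the pointwise estimate gives $\|\phi_{n}*f\|_{H^{s,p}}=\|\phi_{n}*g\|_{L^{p}(\R^{d},w;X)}\lesssim_{p,[w]_{A_{p}},d}\|g\|_{L^{p}(\R^{d},w;X)}=\|f\|_{H^{s,p}}$, and since $\mathcal{J}_{s}$ is an isometry from $H^{s,p}(\R^{d},w;X)$ onto $L^{p}(\R^{d},w;X)$, also $\|\phi_{n}*f-f\|_{H^{s,p}}=\|\phi_{n}*g-g\|_{L^{p}(\R^{d},w;X)}\to0$. (The constant obtained is in fact independent of $s$ and depends only on $d$, $p$, $[w]_{A_{p}}$ and $\phi$.)

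Next I would establish $\phi_{n}*f\in H^{\infty,p}(\R^{d},w;X)$. Fix $t\in\R$; by the commutation identity, $\mathcal{J}_{t}(\phi_{n}*f)=(\mathcal{J}_{t-s}\phi_{n})*g$. Since $\phi_{n}\in\Schw(\R^{d})$ and $\mathcal{J}_{t-s}$ maps $\Schw(\R^{d})$ into itself, $\mathcal{J}_{t-s}\phi_{n}$ is again a Schwartz function, hence dominated by a radially decreasing function in $L^{1}(\R^{d})$ (e.g.\ a multiple of $(1+|\cdot|)^{-d-1}$), so the second hypothesis of Lemma \ref{lem:convolution} is satisfied. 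That lemma then gives $\|(\mathcal{J}_{t-s}\phi_{n})*g\|_{X}\leq C_{t}\,M(\|g\|_{X})$ a.e., so $\mathcal{J}_{t}(\phi_{n}*f)\in L^{p}(\R^{d},w;X)$ by boundedness of $M$, i.e.\ $\phi_{n}*f\in H^{t,p}(\R^{d},w;X)$. As $t$ was arbitrary, $\phi_{n}*f\in H^{\infty,p}(\R^{d},w;X)$.

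I do not expect a genuine obstacle. The only points requiring care are the bookkeeping on the Fourier side — checking that $\mathcal{J}_{t}(\phi_{n}*f)=(\mathcal{J}_{t-s}\phi_{n})*\mathcal{J}_{s}f$ really holds in $\Schw'(\R^{d};X)$ — and the observation that $\mathcal{J}_{t-s}\phi_{n}$ stays Schwartz, which is exactly what lets Lemma \ref{lem:convolution} be applied at the level of $H^{t,p}$ for every $t$; everything else is the $A_{p}$-boundedness of the Hardy--Littlewood maximal operator and dominated convergence. One small caveat: Lemma \ref{lem:convolution} is stated with hypothesis $f\in L^{p}(\R^{d};X)$, but its proof (a.e.\ convergence, a maximal majorant, dominated convergence) goes through verbatim for $g\in L^{p}(\R^{d},w;X)$, which is all that is used here.
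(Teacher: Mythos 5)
Your argument is correct and follows essentially the same route as the paper: the first part is exactly the commutation $\mathcal{J}_s(\phi_n*f)=\phi_n*\mathcal{J}_s f$ combined with Lemma~\ref{lem:convolution}, as in the paper. For the $H^{\infty,p}$ membership the paper instead writes $\phi_n*f=\mathcal{J}_{-s}[\phi_n*\mathcal{J}_s f]$ and passes through $W^{\infty,p}$ via Lemma~\ref{lem:compHW}, whereas you apply the commutation identity $\mathcal{J}_t(\phi_n*f)=(\mathcal{J}_{t-s}\phi_n)*\mathcal{J}_s f$ directly for each $t$ — a cosmetically cleaner variant relying on the same maximal-function fact (though note that for fixed $n$ you are really invoking the single-kernel estimate $\|\psi*g\|_X\leq C_\psi M\|g\|_X$ underlying Lemma~\ref{lem:convolution}, not the dilation structure of the mollifier sequence).
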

\begin{proof}
The first part of the statement follows from Lemma~\ref{lem:convolution} and $\mathcal{J}_{s} (\phi_n*f) = \phi_n*\mathcal{J}_{s} f$.
For the last part, note that $\phi_n*f = \mathcal{J}_{-s}[\phi_n*\mathcal{J}_{s} f] \in H^{\infty,p}(\R^d,w;X)$ by basic properties of convolutions in combination with Lemma~\ref{lem:compHW}.
\end{proof}

The following version of the Hardy inequality will be needed (see \cite[Corolllary 1.4]{MeyVersharp} for a related result). The result can be deduced from \cite[Theorem 1.3 and Proposition 4.3]{MeyVerCh} but for convenience we include an elementary proof.
\begin{lemma}[Hardy inequality with power weights]\label{lem:Hardypp}
Let $\gamma\in (-1, p-1)$ and $s\in (0,1)$. Let $w_{\gamma}(t,x) = |t|^{\gamma}$ for $t\in \R$ and $x\in \R^{d-1}$. Then
$H^{s,p}(\R^{d},w_{\gamma};X) \hookrightarrow L^p(\R^{d},w_{\gamma-s p};X)$.
\end{lemma}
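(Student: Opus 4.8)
The plan is to reduce the $d$-dimensional statement to the one-dimensional case in the weighted variable, and then to prove the one-dimensional Hardy-type embedding by combining a pointwise domination of $f$ by a maximal-type average on scales near $t=0$ with the weighted boundedness of a Hardy operator. First I would observe that since $w_\gamma$ only depends on the first coordinate $t$, and since Bessel potential spaces behave well under the Fourier transform in the tangential variables, it suffices to establish the estimate for $d=1$; alternatively, one can work directly in $\R^d$ using that $H^{s,p}(\R^d,w_\gamma;X)\hookrightarrow W^{1,p}$-type control is not available for $s<1$, so instead I would stay with the integral representation $f = G_s * f_s$ where $f_s = \mathcal{J}_s f \in L^p(\R^d,w_\gamma;X)$, valid by Lemma~\ref{lem:propGs}. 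The key point is that for $s\in(0,1)$ the kernel $G_s$ has the blow-up $G_s(x)\lesssim |x|^{s-d}$ near the origin and exponential decay at infinity.

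The heart of the argument is the following: writing $f(t,x) = (G_s * f_s)(t,x)$, I want to bound $\| |t|^{-s} f(t,x)\|_X$ in $L^p(\R^d,w_\gamma)$ by $\|f_s\|_{L^p(\R^d,w_\gamma;X)}$. The natural route is to split $G_s = G_s \one_{\{|y|\le |t|\}} + G_s \one_{\{|y| > |t|\}}$ relative to the point $(t,x)$ at which we evaluate, or more cleanly to dominate $|t|^{-s}(G_s * \|f_s\|_X)(t,x)$ by a sum of a Hardy-type operator applied to the average of $\|f_s\|_X$ over balls centered near the $t=0$ hyperplane and a term controlled by the Hardy--Littlewood maximal function $M(\|f_s\|_X)(t,x)$. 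The contribution where $|y|\gtrsim |t|$ is the essential one: there $|t|^{-s} G_s(y) \lesssim |t|^{-s}|y|^{s-d}\lesssim |y|^{-d}$ on the relevant region, which after integration produces (a multiple of) $M\|f_s\|_X$, hence is bounded on $L^p(\R^d,w_\gamma)$ since $w_\gamma\in A_p$ by the weighted maximal inequality. The region $|y|\lesssim |t|$ is handled by the decay/size bounds of $G_s$ together with the fact that on this region $|t|$ and $|t-y_1|$ are comparable, so the weight $w_\gamma$ essentially does not see the translation, reducing matters to an unweighted convolution estimate plus Lemma~\ref{lem:convolution}.

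An alternative and perhaps cleaner strategy, which I would pursue in parallel, is to use the difference-norm / Gagliardo-type characterization implicitly available through the singular integral representation of $(-\Delta)^{s/2}$ highlighted in the introduction and Section~\ref{sec:pointwise}: one has a pointwise bound of the form $\|f(t,x)\|_X \lesssim |t|^s \, \big( M_t \text{-average of difference quotients} \big)$ valid for functions vanishing at $t=0$ in the appropriate trace sense, and then $|t|^{-s}\|f(t,x)\|_X$ is dominated by a genuine Hardy operator in the $t$-variable applied to a function whose $L^p(w_\gamma)$-norm is controlled by $[f]_{H^{s,p}}$; the weighted $L^p$-boundedness of the one-dimensional Hardy operator $t\mapsto \tfrac1t\int_0^t g$ on $L^p(|t|^{\gamma-sp}\, dt)$ holds precisely when $\gamma - sp \in$ the admissible range, which matches $\gamma\in(-1,p-1)$ and $s\in(0,1)$ after checking the exponent bookkeeping.

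The step I expect to be the main obstacle is making the splitting near $t=0$ precise while keeping uniform control of the constant in terms of $[w_\gamma]_{A_p}$ only: one must be careful that the "inner" region $|y|\lesssim |t|$ does not interact badly with the weight's singularity on $\{t=0\}$, and that the Hardy operator appearing in the "outer" region is exactly of a form whose weighted bound is classical (Muckenhoupt's $A_p$ condition for the Hardy operator, which is equivalent to the power-weight condition $\gamma-sp+ \text{(shift)}$ lying in $(-1,p-1)$). Verifying that the scalar weighted Hardy inequality applies with the claimed range of parameters, and that no UMD assumption is needed (consistent with the statement), is the bookkeeping that makes or breaks the proof; everything else is assembling Lemmas~\ref{lem:propGs}, \ref{lem:convolution}, and the weighted maximal inequality from Proposition~\ref{prop:Mihlin}'s circle of ideas.
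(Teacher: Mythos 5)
Your starting point matches the paper: reduce to bounding $\|G_s * f_s\|_{L^p(w_{\gamma-sp};X)}$ by $\|f_s\|_{L^p(w_\gamma;X)}$, where $f_s = \mathcal{J}_s f$, and use the size bounds for $G_s$ from Lemma~\ref{lem:propGs}. But your central estimate for the region $|y|\gtrsim|t|$ does not hold. You claim that $|t|^{-s}G_s(y)\lesssim |y|^{-d}$ there and that integration gives a multiple of $M\|f_s\|_X$. This fails because $|y|^{-d}\one_{\{|t|\lesssim|y|\lesssim 1\}}$ is \emph{not} integrable uniformly in $t$: its $L^1$-norm is of order $\log(1/|t|)$, so a pointwise domination of the convolution by the maximal function picks up a divergent logarithmic factor as $t\to 0$, exactly where $w_\gamma$ is singular. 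There is no way to absorb this log into the weight at the endpoint; the outer piece is genuinely a Riesz-potential/Hardy-type operator, not a maximal operator, and must be treated by a weighted convolution inequality, not the $A_p$ maximal bound. Your hedged acknowledgment that "bookkeeping makes or breaks the proof" does not identify this as the failure point, and your "alternative" via the singular-integral representation of $(-\Delta)^{s/2}$ is too vague to assess.

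The paper sidesteps the splitting entirely. After reducing to scalar $f\geq 0$ via $G_s\geq 0$, it applies Minkowski's inequality and Young's inequality in the tangential variables $\tilde{x}\in\R^{d-1}$ to collapse the problem to a genuine one-dimensional convolution: with $g_s(t):=\|G_s(t,\cdot)\|_{L^1(\R^{d-1})}$ and $\phi(\tau):=\|f(\tau,\cdot)\|_{L^p(\R^{d-1})}$, one has $\|G_s*f(t,\cdot)\|_{L^p(\R^{d-1})}\leq g_s*\phi(t)$. Using the kernel bounds, $g_s(t)\lesssim |t|^{s-1}$ for $|t|\leq 2$ (here $s<1$ is what makes the tangential integral converge) with exponential decay for $|t|>2$. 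The claimed embedding then becomes the one-dimensional weighted Young's inequality $\|g_s*\phi\|_{L^p(\R,w_{\gamma-sp})}\lesssim \|\phi\|_{L^p(\R,w_\gamma)}$, which follows from Kerman's convolution theorem with weights and the fact that $\sup_t |t|^{1-s}g_s(t)<\infty$. This is the key ingredient your proposal is missing: a weighted Young/Stein--Weiss-type inequality, applied after the tangential reduction, in place of the maximal function bound.
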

\begin{proof}
It suffices to prove $\|G_s*f\|_{L^p(w_{\gamma-s p};X)} \lesssim_{p,s,d,\gamma} \|f\|_{L^p(w_{\gamma};X)}$, where $G_s$ is as in Lemma \ref{lem:propGs} and $f\in L^p(w_{\gamma};X)$. Since $G_s\geq 0$, by the triangle inequality it suffices to consider the case of scalar functions $f$ with $f\geq 0$.

To prove the result we first apply Minkowski's and Young's inequality in $\R^{d-1}$:
\begin{align*}
\|G_s*f(t,\cdot)\|_{L^p(\R^{d-1})}\leq \int_{\R} \|G_s(t-\tau, \cdot)\|_{L^1(\R^{d-1})} \|f(\tau,\cdot)\|_{L^p(\R^{d-1})} d\tau = g_s*\phi(\tau).
\end{align*}
Here $g_s(t) = \|G_s(t, \cdot)\|_{L^1(\R^{d-1})}$ and $\phi(\tau) = \|f(\tau,\cdot)\|_{L^p(\R^{d-1})}$. Then for $|t|\leq 2$, by Lemma \ref{lem:propGs} \eqref{it:expnearinftya} and \eqref{it:expnearinftyb},
\[g_s(t) \lesssim_{s,d} \int_{\R^{d-1}}  (|t|+|x|)^{s-d} dx = |t|^{s-1} \int_{\R^{d-1}} (1+|x|)^{s-d}  dx =C |t|^{s-1},\]
where we used $s<1$. For $|t|>2$, by Lemma \ref{lem:propGs} \eqref{it:expnearinftyb} and $|(t,x)| \eqsim |t|+|x|$, we find
\[g_s(t) \lesssim_{s,d} e^{- \frac{|t|}{2}} \int_{\R^d} e^{-\frac{|x|}{2}} dx \eqsim_d e^{-\frac{|t|}{2}}.\]

Finally by the weighted version of Young's inequality (see and \cite[Theorem 3.4(3.7)]{Kerman}) in dimension one, we find that
\begin{align*}
\|G_s*f\|_{L^p(\R^d,w_{\gamma-sp})} \leq  \|g_s*\phi\|_{L^p(\R,w_{\gamma-s p})} \leq C \|\phi\|_{L^p(\R,w_{\gamma})} = C\|f\|_{L^p(\R^d,w_{\gamma})},
\end{align*}
where $C = \sup_{t\in \R} |t|^{1-s} g_s(t)<\infty$.
\end{proof}

We end this section with a weighted version of the classical Hardy--Hilbert inequality.
\begin{lemma}[Hardy--Hilbert inequality with power weights]\label{lem:HardyHilbert}
Let $p\in (1, \infty)$ and $\gamma\in (-1,p-1)$. Let $w_{\gamma}(x_1,\tilde{x}) = |x_1|^{\gamma}$ and $k(x,y) = \frac{1}{((|x_1|+|y_1|)^2+ |\tilde{x}-\tilde{y}|^2)^{d/2}}$, where $x = (x_1, \tilde{x})$ and $y = (y_1, \tilde{y})$.
Then the formula
\[I_k h(x) := \int_{\R^d} k(x,y) h(y) \ud y\]
yields a well-defined bounded linear operator $I_k$ on $L^p(\R^d, w_{\gamma})$.
\end{lemma}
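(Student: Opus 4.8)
The plan is to integrate out the transversal variables $\tilde{x}$ first and thereby reduce the statement to the classical one-dimensional Hardy--Hilbert inequality with a power weight. Since $k\geq 0$ and $h$ is scalar-valued, it suffices to prove the a priori estimate $\|I_k|h|\|_{L^p(\R^d,w_{\gamma})}\lesssim_{p,\gamma,d}\|h\|_{L^p(\R^d,w_{\gamma})}$ for $h\geq 0$; this simultaneously shows that the defining integral converges for a.e.\ $x$.

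The first ingredient is the elementary scaling identity
\[
\int_{\R^{d-1}} \big((|x_1|+|y_1|)^2+|\tilde{x}-\tilde{y}|^2\big)^{-d/2}\,\ud\tilde{y} = \frac{c_d}{|x_1|+|y_1|}, \qquad c_d:=\int_{\R^{d-1}}(1+|\tilde{z}|^2)^{-d/2}\,\ud\tilde{z}<\infty,
\]
obtained from the substitution $\tilde{y}=\tilde{x}+(|x_1|+|y_1|)\tilde{z}$; here $c_d<\infty$ because the integrand decays like $|\tilde{z}|^{-d}$ and $d>d-1$. For fixed $x_1,y_1$ the $\tilde{y}$-integral in $I_kh$ is a convolution on $\R^{d-1}$ against a kernel of $L^1$-norm $c_d/(|x_1|+|y_1|)$, so Minkowski's integral inequality in the $\tilde{x}$-variable followed by Young's inequality gives, with $H(t):=\|h(t,\cdot)\|_{L^p(\R^{d-1})}$,
\[
\|I_kh(x_1,\cdot)\|_{L^p(\R^{d-1})}\leq c_d\int_{\R}\frac{H(y_1)}{|x_1|+|y_1|}\,\ud y_1.
\]
Taking the $L^p(\R,|x_1|^{\gamma}\,\ud x_1)$-norm of both sides and using $\|h\|_{L^p(\R^d,w_{\gamma})}=\|H\|_{L^p(\R,|t|^{\gamma})}$ (Fubini), we are reduced to the one-dimensional claim that $T_1H(x):=\int_{\R}(|x|+|y|)^{-1}H(y)\,\ud y$ is bounded on $L^p(\R,|t|^{\gamma})$.

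For the one-dimensional inequality, the kernel and the weight are even, so splitting the integral into $\{y>0\}$ and $\{y<0\}$ reduces matters to the boundedness of $G\mapsto\int_0^\infty(x+y)^{-1}G(y)\,\ud y$ on $L^p((0,\infty),x^{\gamma}\,\ud x)$ for $G\geq 0$. Here I would exploit the $(-1)$-homogeneity of the kernel: the substitution $y=xs$ gives $\int_0^\infty(x+y)^{-1}G(y)\,\ud y=\int_0^\infty(1+s)^{-1}G(xs)\,\ud s$, and Minkowski's integral inequality in $L^p((0,\infty),x^{\gamma}\,\ud x)$ together with the scaling identity $\|G(s\,\cdot)\|_{L^p((0,\infty),x^{\gamma}\,\ud x)}=s^{-(\gamma+1)/p}\|G\|_{L^p((0,\infty),x^{\gamma}\,\ud x)}$ bounds this by $\|G\|_{L^p(x^{\gamma})}\int_0^\infty s^{-(\gamma+1)/p}(1+s)^{-1}\,\ud s$.

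I do not expect a genuine obstacle; the one thing to check is that the last integral $\int_0^\infty s^{-(\gamma+1)/p}(1+s)^{-1}\,\ud s$ is finite, which holds precisely when $0<(\gamma+1)/p<1$, i.e.\ when $-1<\gamma<p-1$ — exactly the hypothesis on $\gamma$ — with value $\pi/\sin(\pi(\gamma+1)/p)$. Thus the role of the weight condition is entirely visible in this single elementary integral. Alternatively, the whole statement follows from a weighted Schur test with a suitable power of $|y_1|$ as test function, but the reduction above is more transparent and keeps track of the constant.
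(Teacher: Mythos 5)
Your proof is correct. The outer reduction is essentially the same as in the paper: Minkowski's integral inequality in the transversal variables $\tilde{x}$, together with a scaling argument (computing the $L^1$-norm of the $(d-1)$-dimensional kernel, which is what the paper does implicitly by substituting $u=\tilde{x}/r$, $v=\tilde{y}/r$ with $r=x_1+y_1$) and Young's inequality, collapse the problem to boundedness of $H\mapsto\int_\R(|x|+|y|)^{-1}H(y)\,\ud y$ on $L^p(\R,|t|^\gamma)$. Where you diverge is in the one-dimensional core. The paper first absorbs the weight into the kernel, replacing $k$ by $k_\beta(x,y)=|x|^\beta|y|^{-\beta}/(|x|+|y|)$ with $\beta=\gamma/p$, and then applies Schur's test with the test functions $s(x)=t(x)=x^{-1/(pp')}$. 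You instead exploit the $(-1)$-homogeneity of the kernel directly: substitute $y=xs$, apply Minkowski's integral inequality in $L^p((0,\infty),x^\gamma\,\ud x)$, and use the dilation identity $\|G(s\,\cdot)\|_{L^p(x^\gamma\,\ud x)}=s^{-(\gamma+1)/p}\|G\|_{L^p(x^\gamma\,\ud x)}$ to reduce everything to the convergence of $\int_0^\infty s^{-(\gamma+1)/p}(1+s)^{-1}\,\ud s$. Both are standard elementary proofs of the weighted Hilbert inequality; your route has the minor advantage of making the necessity of $-1<\gamma<p-1$ manifest in a single Beta-integral and of producing the sharp constant $\pi/\sin(\pi(\gamma+1)/p)$, while the Schur-test route is slightly more flexible if one later wants to treat more general kernels or weights. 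Your even/odd splitting to pass from $\R$ to $(0,\infty)$ is fine and implicitly uses $\|H(\cdot)+H(-\cdot)\|_{L^p((0,\infty),x^\gamma)}\lesssim\|H\|_{L^p(\R,|t|^\gamma)}$, which holds by the triangle inequality; the paper handles this by the symmetry reduction "it is enough to consider $x_1,y_1>0$" at the outset, which is the same idea.
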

\begin{proof}
It suffices to consider $h \geq 0$.
Moreover, by symmetry it is enough to consider $x_1,y_1>0$. Thus we need to show that
\[
\| x \mapsto \int_{\R^d_{+}} k(x,y) h(y) \ud y \|_{L^{p}(\R^{d}_{+},w_{\gamma})} \lesssim_{p, d, \gamma} \|h\|_{L^{p}(\R^{d}_{+},w_{\gamma})},
\qquad h \in L^{p}(\R^{d}_{+},w_{\gamma}), h \geq 0.
\]

{\em Step I.} \emph{The case $d=1$.} Replacing $k$ by
\[k_{\beta}(x,y) =\frac{w_{\gamma}(x)^{1/p} w_{\gamma}(y)^{-1/p}}{(|x|+|y|)} = \frac{|x|^{\beta} |y|^{-\beta}}{|x|+|y|},\]
with $\beta = \gamma/p$, it suffices to consider the unweighted case.

To prove the required result we apply Schur's test in the same way as in \cite[Theorem 5.10.1]{Garling}. Let $s(x) = t(x) = x^{-\frac{1}{pp'}}$. Then since $-1<\beta-\frac{1}{p'}<0$
\begin{align*}
\int_0^\infty s(x)^{p} k_{\beta}(x,y) \ud x = \int_0^\infty \frac{x^{\beta-\frac{1}{p'}} y^{-\beta}}{x+y} \ud x = t(y)^{p} \int_0^\infty \frac{z^{\beta-\frac1p}}{z+1} \ud z = C_{p,\beta} t(y)^p.
\end{align*}
Similarly, since $-1<-\beta-\frac{1}{p}<0$
\begin{align*}
\int_0^\infty t(y)^{p'} k_{\beta}(x,y) \ud y = \int_0^\infty \frac{x^{\beta} y^{-\beta - \frac1p}}{x+y} \ud y = s(x)^{p'} \int_0^\infty \frac{z^{-\beta-\frac1p}}{1+z} \ud z = C_{p,\beta} s(x)^{p'}.
\end{align*}

{\em Step II.} \emph{The general case.}
By Minkowski's inequality we find
\begin{align*}
\|I_k f(x_1, \cdot)\|_{L^p(\R^{d-1})} \leq \int_{0}^\infty \Big(\int_{\R^{d-1}}\Big(\int_{\R^{d-1}} \frac{f(y_1, \tilde{y})}{((x_1+y_1)^2 + |\tilde{x}-\tilde{y}|^2)^{d/2}} \ud \tilde{y}\Big)^p \ud \tilde{x}\Big)^{1/p} \ud y_1.
\end{align*}
Fix $y_1>0$ and let $g_r(\tilde{y}) = f(y_1, r\tilde{y})$. Setting $r = x_1+y_1$ and substituting $u:=\tilde{x}/r$ and $v:=\tilde{y}/r$ we can write
\begin{align*}
\int_{\R^{d-1}}&\Big(\int_{\R^{d-1}} \frac{f(y_1, \tilde{y})}{(|x_1+y_1|^2 + |\tilde{x}-\tilde{y}|^2)^{d/2}} \ud \tilde{y}\Big)^p \ud \tilde{x} \\ & =
r^{-p+d-1} \int_{\R^{d-1}}\Big(\int_{\R^{d-1}} \frac{g_r(v)}{(1 + |u-v|^2)^{d/2}} \ud v\Big)^p \ud u
\\ & \leq r^{-p+d-1} \|g_r\|_{L^p(\R^{d-1})}^p \|(1 + |\cdot|^2)^{-d/2}\|_{L^1(\R^{d-1})}^p
= C_{d,p} r^{-p} \|g_1\|_{L^p(\R^{d-1})}^p,
\end{align*}
where we applied Young's inequality for convolutions.  Therefore,
\begin{align*}
\|I_k f(x_1, \cdot)\|_{L^p(\R^{d-1})} \leq C_{d,p} \int_0^\infty \frac{\|f(y_1, \cdot)\|_{L^p(\R^{d-1})}}{x_1+y_1} \ud y_1.
\end{align*}
Taking $L^p((0,\infty),w_{\gamma})$-norms in $x_1$ and applying Step I yields the required result.
\end{proof}

\begin{remark}
Actually, the kernel $k$ of Lemma \ref{lem:HardyHilbert} is a standard Calder\'on--Zygmund kernel, because $k$ is a.e. differentiable and
\[|\nabla_x k(x,y)|  + |\nabla_y k(x,y)| \leq |x-y|^{-d-1}, \ \ \ x\neq y. \]
Although we will not need it below let us note that
\cite[Corollary 2.10]{HH14} implies that $I_k$ is bounded on $L^p(\R^d,w)$ for any $w\in A_p$
\end{remark}

\section{Pointwise multiplication with $\one_{\R^d_+}$\label{sec:pointwise}}

In this section we prove the pointwise multiplier result, which is central in the characterization of the complex interpolation spaces of Sobolev spaces with boundary conditions in Section \ref{sec:intbdr}. Let $w_{\gamma}(x_1, \tilde{x}) = |x_{1}|^{\gamma}$, where $x_1\in \R$ and $\tilde{x}\in \R^{d-1}$.
 \begin{theorem}\label{thm:pointwise_multiplier}
Let $X$ be a UMD space, $p\in (1, \infty)$, $\gamma\in (-1,p-1)$, $\gamma' = -\gamma/(p-1)$, and assume $-\frac{\gamma'+1}{p'}<s <\frac{\gamma+1}{p}$. Then for all $f\in H^{s,p}(\R^{d},w_{\gamma};X) \cap L^{p}(\R^{d},w_{\gamma};X)$, we have $\one_{\R^{d}_{+}} f\in H^{s,p}(\R^{d},w_{\gamma};X)$  and
\[\|\one_{\R^{d}_{+}} f\|_{H^{s,p}(\R^{d},w_{\gamma};X)} \lesssim_{X,p,\gamma,s} \|f\|_{ H^{s,p}(\R^{d},w_{\gamma};X)},\]
and therefore, pointwise multiplication by $\one_{\R^{d}_{+}}$ extends to a bounded linear operator on $H^{s,p}(\R^{d},w_{\gamma};X)$.
\end{theorem}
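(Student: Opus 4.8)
The plan is to reduce the multiplier estimate to a singular-integral bound on $L^p(\R^d, w_\gamma; X)$ by representing $(1-\Delta)^{s/2}$—or rather, after a harmless reduction, the homogeneous fractional Laplacian $(-\Delta)^{s/2}$—as a singular integral, and then to control the commutator between $(-\Delta)^{s/2}$ and multiplication by $\one_{\R^d_+}$ using the Hardy--Hilbert inequality of Lemma \ref{lem:HardyHilbert}. Concretely, since $0<s<1$ in the relevant range (note $\frac{\gamma+1}{p}<1$ because $\gamma<p-1$), I would use the classical pointwise formula
\[
(-\Delta)^{s/2} g(x) = c_{d,s}\int_{\R^d} \frac{g(x) - g(y)}{|x-y|^{d+s}}\ud y
\]
valid for nice $g$, together with the fact (Lemma \ref{lem:compHW}, Lemma \ref{lem:densityCcH}) that it suffices to prove the bound for $f\in C_c^\infty(\R^d)\otimes X$ and then pass to the limit. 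The first reduction step is to replace $\mathcal{J}_s=(1-\Delta)^{s/2}$ by $(-\Delta)^{s/2}$ up to lower-order terms: writing $(1-\Delta)^{s/2} = (-\Delta)^{s/2} + R$ where $R$ has symbol $(1+|\xi|^2)^{s/2} - |\xi|^s$, one checks $R$ is an order-zero type operator (bounded on $L^p(w_\gamma;X)$ by Proposition \ref{prop:Mihlin} after a dyadic decomposition, since the symbol difference is $O(|\xi|^{s-2})$ near infinity and $O(1)$ near zero, which is Mihlin away from the origin and can be handled by splitting high/low frequencies), so that $\|\one_{\R^d_+}f\|_{H^{s,p}}\eqsim \|(-\Delta)^{s/2}(\one_{\R^d_+}f)\|_{L^p(w_\gamma;X)} + \|\one_{\R^d_+}f\|_{L^p(w_\gamma;X)}$; the second term is trivially bounded, so everything comes down to estimating $(-\Delta)^{s/2}(\one_{\R^d_+}f)$ in $L^p(\R^d,w_\gamma;X)$.

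The core computation is then the following. For $x\in\R^d_+$ one writes
\[
(-\Delta)^{s/2}(\one_{\R^d_+}f)(x) = c_{d,s}\int_{\R^d}\frac{\one_{\R^d_+}(x)f(x) - \one_{\R^d_+}(y)f(y)}{|x-y|^{d+s}}\ud y
= (-\Delta)^{s/2}f(x) + c_{d,s}\int_{\R^d_-}\frac{f(y)}{|x-y|^{d+s}}\ud y,
\]
and symmetrically for $x\in\R^d_-$ the difference is $c_{d,s}\int_{\R^d_+}\frac{f(y)}{|x-y|^{d+s}}\ud y$. Since $(-\Delta)^{s/2}f\in L^p(w_\gamma;X)$ with the right bound (again by the reduction above applied to $f$ itself), it remains to bound the ``reflection'' term
\[
x\mapsto \int_{\R^d_-}\frac{\|f(y)\|_X}{|x-y|^{d+s}}\ud y
\]
in $L^p(\R^d_+,w_\gamma)$ by $\|f\|_{L^p(w_\gamma;X)}$—but that norm is not available; only $\|f\|_{H^{s,p}(w_\gamma;X)}$ is. The trick is that for $x_1>0$ and $y_1<0$ we have $|x-y|^2 = (x_1-y_1)^2 + |\tilde x-\tilde y|^2 = (x_1 + |y_1|)^2 + |\tilde x-\tilde y|^2$, so after substituting $y\mapsto (-y_1,\tilde y)$ the kernel becomes exactly $((x_1+y_1)^2+|\tilde x-\tilde y|^2)^{-(d+s)/2}$ on $\R^d_+\times\R^d_+$. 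Dominating $((x_1+y_1)^2+|\tilde x-\tilde y|^2)^{-(d+s)/2}\le ((x_1+y_1)^2+|\tilde x-\tilde y|^2)^{-d/2}\,(x_1+y_1)^{-s}$ is too lossy; instead I would split: use Lemma \ref{lem:HardyHilbert}'s kernel $k$ as the ``angular/radial'' part and absorb the remaining $(x_1+y_1)^{-s}$ factor against a weight shift, which is precisely what the Hardy inequality Lemma \ref{lem:Hardypp} is for. That is, one estimates the reflection operator as (constant times) $I_k$ composed with multiplication by $|x_1|^{-s}$ and $|y_1|^s$-type factors so that it maps $L^p(w_{\gamma-sp};X)\to L^p(w_\gamma;X)$, and then invokes $H^{s,p}(w_\gamma;X)\hookrightarrow L^p(w_{\gamma-sp};X)$ from Lemma \ref{lem:Hardypp} to conclude. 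One must check the two-sided weight constraints $-\frac{\gamma'+1}{p'} < s < \frac{\gamma+1}{p}$ are exactly what make both the $I_k$ boundedness (needs $\gamma\in(-1,p-1)$, automatic) and the Hardy step (needs $\gamma - sp > -1$, i.e. $s<\frac{\gamma+1}{p}$) work, with the lower bound on $s$ entering through the dual formulation / the $\R^d_-$-to-$x\in\R^d_+$ direction by symmetry.

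The main obstacle I anticipate is making the kernel bookkeeping in the reflection term fully rigorous: the naive factorization $|x-y|^{-d-s}\le k(x,y)\cdot(x_1+y_1)^{-s}$ followed by Hölder-type juggling has to be done so that the intermediate weight is $w_{\gamma-sp}$ and not something outside the $A_p$ range, and one needs the Hardy--Hilbert operator $I_k$ to act on the correctly-weighted space—Lemma \ref{lem:HardyHilbert} is stated on $L^p(w_\gamma)$, so I would either restate/reprove it with a shifted weight (it still holds as long as the shifted exponent lies in $(-1,p-1)$, which it does not in general!) or, better, keep $I_k$ on $L^p(w_\gamma)$ and instead write the reflection kernel as $(x_1+y_1)^{-s}(\ldots)$ with the $(\ldots)$ being $I_{k}$-controlled and the $(x_1+y_1)^{-s}$ converted into a gain via $y_1^s$ pulled from the $H^{s,p}$-norm of $f$ through Lemma \ref{lem:Hardypp} applied to $f$ directly (giving $\|\,|y_1|^{-s}f\|_{L^p(w_\gamma;X)}\lesssim\|f\|_{H^{s,p}(w_\gamma;X)}$, since $w_{\gamma}\cdot |y_1|^{-sp}=w_{\gamma-sp}$). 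Getting this chain of estimates to land in the right space with the right constant, and correctly handling the passage from the dense class $C_c^\infty\otimes X$ to general $f$ using the a priori finiteness of both $\|f\|_{H^{s,p}(w_\gamma;X)}$ and $\|f\|_{L^p(w_\gamma;X)}$ in the hypothesis, is where the care is needed; the harmonic-analytic inputs themselves (singular integral representation, Mihlin, Hardy, Hardy--Hilbert) are all already in place.
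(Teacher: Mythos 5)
Your outline matches the paper's own proof almost step by step for the case $s>0$: you reduce to the homogeneous fractional Laplacian, split $(-\Delta)^{s/2}(\one_{\R^d_+}f)$ on $\R^d_+$ into $(-\Delta)^{s/2}f$ plus a ``reflection'' term with kernel $((x_1+y_1)^2+|\tilde x-\tilde y|^2)^{-(d+s)/2}$ on $\R^d_+\times\R^d_+$, and then peel off $(x_1+y_1)^{-s}\le |y_1|^{-s}$ so that $I_k$ (Lemma \ref{lem:HardyHilbert}) acts on $\phi=|y_1|^{-s}\|f\|_X$ and Lemma \ref{lem:Hardypp} converts $\|\phi\|_{L^p(w_\gamma)}$ into $\|f\|_{H^{s,p}(w_\gamma;X)}$. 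The paper's split into $G_{1,j}$ and $G_{2,j}$ via the set $S$ in \eqref{eq:thm:pointwise_multiplier;formula_case_split} is exactly your decomposition, written with truncated integrals. Your alternative reduction $(1-\Delta)^{s/2}=(-\Delta)^{s/2}+R$ with $R$ a Mihlin multiplier is a perfectly workable substitute for Lemma \ref{lem:fractionalLaplace} when $s>0$.

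There are two genuine gaps. First, you silently restrict to $0<s<1$: the hypothesis allows $-\frac{\gamma'+1}{p'}<s\le 0$, and for $s\le 0$ neither the symbol subtraction (the symbol $(1+|\xi|^2)^{s/2}-|\xi|^s$ is unbounded near $\xi=0$ when $s<0$) nor the pointwise singular-integral formula applies. The paper dispatches $s<0$ by a one-line duality argument using the self-duality of the multiplier $\one_{\R^d_+}$ and $H^{s,p}(\R^d,w_\gamma;X)^*=H^{-s,p'}(\R^d,w_{\gamma'};X^*)$, and you would need something like that. Second, and more substantially, the step ``$(-\Delta)^{s/2}(\one_{\R^d_+}f)(x)=c_{d,s}\int\frac{g(x)-g(y)}{|x-y|^{d+s}}dy$'' is not a priori meaningful for $g=\one_{\R^d_+}f$, which is discontinuous and not known to lie in any $H^{\sigma,p}$ with $\sigma>0$. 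The classical formula is valid for nice $g$; for this rough $g$ one must produce a representation of $(-\Delta)^{s/2}$ on distributions, show the truncated singular integrals $\int_{A(r,R)}\delta_h g/|h|^{d+s}\,dh$ converge both in $\mathcal{S}'$ (to $(-\Delta)^{s/2}g$) and in $L^p$-type spaces, and then identify the limits. This is precisely the content of Proposition \ref{prop:eq_norm_differences}, which the paper proves for exactly this purpose. You flag ``making the kernel bookkeeping rigorous'' as the main obstacle but do not supply this tool; without it the core identity you compute from is unjustified. Once those two points are filled in, the rest of your plan is correct and coincides with the paper.
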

To prove this the UMD property will only be used through the norm equivalence of Lemma \ref{lem:fractionalLaplace} below.

\begin{lemma}\label{lem:fractionalLaplace}
Let $X$ be a UMD space, $p\in (1, \infty)$, $s\in \R$, $\sigma\geq 0$, $w\in A_p$. Then
\[
(-\Delta)^{\sigma/2}: \mathcal{S}(\R^{d};X) \longrightarrow \mathcal{S}'(\R^{d};X),\,
f \mapsto \mathscr{F}^{-1}[(\xi \mapsto |\xi|^{\sigma})\wh{f}]
\]
defines for each $r \in \R$ (by extension by density) a bounded linear operator from $H^{r+\sigma,p}(\R^{d},w;X)$ to $H^{r,p}(\R^{d},w;X)$, independent of $r$ and $w$ (in the sense of compatibility), which we still denote by $(-\Delta)^{\sigma/2}$.
Moreover, $f \in H^{s+\sigma,p}(\R^{d},w;X)$ if and only if $f,(-\Delta)^{\sigma/2}f\in H^{s,p}(\R^{d},w;X)$, in which case
\[
\|f\|_{H^{s,p}(\R^{d},w;X)} \eqsim_{s,p,w,d,\sigma,X}
\|f\|_{H^{s-\sigma,p}(\R^{d},w;X)} + \|(-\Delta)^{\sigma/2}f\|_{H^{s-\sigma,p}(\R^{d},w;X)}.
\]
\end{lemma}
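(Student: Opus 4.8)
The plan is to reduce everything to the classical fact that the operators relating Bessel potential scales are Mihlin multipliers, and then to invoke Proposition~\ref{prop:Mihlin} together with the density of $\Schw(\R^d;X)$ in the relevant spaces (Lemma~\ref{lem:densityCcH}). First I would define $(-\Delta)^{\sigma/2}$ on $\Schw(\R^d;X)$ by the given Fourier formula; the only subtlety is that $|\xi|^{\sigma}$ is not smooth at the origin, so I would factor through $\mathcal{J}_{s}$: writing $m_{\sigma}(\xi) = |\xi|^{\sigma}(1+|\xi|^2)^{-\sigma/2}$, one has $(-\Delta)^{\sigma/2} = \mathcal{J}_{-\sigma}\circ T_{m_{\sigma}}$ as operators on $\Schw(\R^d;X)$. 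Hence for $f\in\Schw(\R^d;X)$,
\[
\|(-\Delta)^{\sigma/2}f\|_{H^{r,p}(\R^d,w;X)} = \|T_{m_{\sigma}}(\mathcal{J}_{-\sigma}f)\|_{H^{r,p}(\R^d,w;X)} = \|T_{m_{\sigma}}f\|_{H^{r+\sigma,p}(\R^d,w;X)},
\]
using that $\mathcal{J}_{t}$ is an isometry $H^{a,p}\to H^{a-t,p}$ for every $a$ and commutes with $T_{m_\sigma}$. So the boundedness claim is equivalent to boundedness of $T_{m_{\sigma}}$ on $H^{r+\sigma,p}(\R^d,w;X)$, and by applying $\mathcal{J}_{r+\sigma}$ this is in turn equivalent to boundedness of $T_{m_\sigma}$ on $L^p(\R^d,w;X)$. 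Now $m_{\sigma}\in C^{\infty}(\R^d\setminus\{0\})$ and a direct check shows $\sup_{|\alpha|\le d+2}\sup_{\xi\ne 0}|\xi|^{|\alpha|}|\partial^{\alpha}m_{\sigma}(\xi)| < \infty$ (the factor $|\xi|^{\sigma}$ has bounded logarithmic derivatives of all orders on $\R^d\setminus\{0\}$ by homogeneity, and $(1+|\xi|^2)^{-\sigma/2}$ is a genuine $C^{d+2}$ Mihlin symbol), so Proposition~\ref{prop:Mihlin} applies. This gives the bounded extension and, since $\Schw(\R^d;X)$ is dense in each $H^{r+\sigma,p}(\R^d,w;X)$ (Lemma~\ref{lem:densityCcH}) and the extensions all agree on $\Schw(\R^d;X)$, the independence of $r$ and $w$ in the sense of compatibility is automatic.

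Next I would establish the equivalence and the norm estimate. For the nontrivial direction, suppose $f,(-\Delta)^{\sigma/2}f\in H^{s,p}(\R^d,w;X)$. Consider the Fourier multiplier with symbol $n(\xi) = (1+|\xi|^2)^{\sigma/2}(1+|\xi|^{\sigma})^{-1}$; again $n\in C^{\infty}(\R^d\setminus\{0\})$ but is merely continuous (not smooth) at $0$, which is harmless for the Mihlin condition, and one checks $n$ satisfies the hypothesis of Proposition~\ref{prop:Mihlin}. The identity $(1+|\xi|^2)^{\sigma/2} = n(\xi)\bigl(1 + |\xi|^{\sigma}\bigr)$ on $\R^d\setminus\{0\}$, combined with the decomposition $\mathcal{J}_{\sigma} = T_n\circ(I + (-\Delta)^{\sigma/2})$ (valid on $\Schw(\R^d;X)$ and then by density), yields
\[
\|f\|_{H^{s+\sigma,p}(\R^d,w;X)} = \|\mathcal{J}_{\sigma}f\|_{H^{s,p}(\R^d,w;X)} \lesssim \|f\|_{H^{s,p}(\R^d,w;X)} + \|(-\Delta)^{\sigma/2}f\|_{H^{s,p}(\R^d,w;X)},
\]
where the implicit constant depends only on $s,p,d,\sigma,X,[w]_{A_p}$. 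The converse inequality, $\|f\|_{H^{s,p}}\le\|f\|_{H^{s+\sigma,p}}$ (trivial, as $\mathcal{J}_{-\sigma}$ maps $H^{s,p}$ to $H^{s+\sigma,p}$ contractively... actually $\mathcal{J}_{-\sigma}$ smoothens) and $\|(-\Delta)^{\sigma/2}f\|_{H^{s,p}} = \|(-\Delta)^{\sigma/2}f\|_{H^{(s+\sigma)-\sigma,p}} \lesssim \|f\|_{H^{s+\sigma,p}}$ by the boundedness part already proved, complete the two-sided estimate. Finally the displayed equivalence in the statement with $H^{s-\sigma,p}$-norms is just the previous equivalence with $s$ replaced by $s-\sigma$.

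The only genuine point requiring care — and thus the "main obstacle" — is the treatment of the singularity of $|\xi|^{\sigma}$ (and of $n$) at the origin: one must make sure that the objects being differentiated in the Mihlin estimate are smooth away from $0$ and that the finite order of differentiability required ($d+2$) is enough, which it is since the bad factors are either $C^{\infty}$ away from $0$ with scale-invariant bounds, or $C^{d+2}$ everywhere. Everything else is a routine combination of the isometry property of $\mathcal{J}_t$ on the Bessel scale, commutation of Fourier multipliers, the weighted Mihlin theorem (Proposition~\ref{prop:Mihlin}), and density of Schwartz functions (Lemma~\ref{lem:densityCcH}).
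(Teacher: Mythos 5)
Your proof is essentially the same as the paper's: the lemma is reduced to the weighted Mihlin theorem (Proposition~\ref{prop:Mihlin}) applied to the symbols $|\xi|^\sigma(1+|\xi|^2)^{-\sigma/2}$, $(1+|\xi|^2)^{-\sigma/2}$, and $(1+|\xi|^2)^{\sigma/2}(1+|\xi|^\sigma)^{-1}$, combined with the isometric action of $\mathcal{J}_t$ along the Bessel scale and density of $\Schw(\R^d;X)$. (The exponents $2/\sigma$ appearing in the paper's one-line proof are a typo for $\sigma/2$, which is what you correctly use.)

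One sign slip to fix: with $m_\sigma(\xi) = |\xi|^\sigma(1+|\xi|^2)^{-\sigma/2}$ the correct factorisation is $(-\Delta)^{\sigma/2} = \mathcal{J}_{\sigma}\circ T_{m_\sigma}$, not $\mathcal{J}_{-\sigma}\circ T_{m_\sigma}$ (the latter has symbol $|\xi|^\sigma(1+|\xi|^2)^{-\sigma}$). The middle term in your displayed chain should therefore read $\|T_{m_\sigma}(\mathcal{J}_{\sigma}f)\|_{H^{r,p}}$; the final equality $\|(-\Delta)^{\sigma/2}f\|_{H^{r,p}} = \|T_{m_\sigma}f\|_{H^{r+\sigma,p}}$ is still correct (via $\mathcal{J}_r\mathcal{J}_\sigma = \mathcal{J}_{r+\sigma}$), so nothing downstream is affected.
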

\begin{proof}
All assertions follow from the fact that the symbols
\[
\xi \mapsto \frac{|\xi|^{\sigma}}{(1+|\xi|^{2})^{2/\sigma}}, \quad
\xi \mapsto \frac{1}{(1+|\xi|^{2})^{2/\sigma}}, \quad
\xi \mapsto \frac{(1+|\xi|^{2})^{2/\sigma}}{1+|\xi|^{\sigma}}
\]
satisfy the conditions of Proposition \ref{prop:Mihlin}.
\end{proof}

In the proof of Theorem~\ref{thm:pointwise_multiplier} we will use the norm equivalence of the above lemma via (a variant of) a well known representation for $(-\Delta)^{\sigma/2}$ as a singular integral.
For $f \in H^{\sigma,p}(\R^{d})$ this representation reads as follows:
\[
(-\Delta)^{\sigma/2}f = \lim_{r \to 0^{+}}C_{d,\sigma} \int_{\R^{d} \setminus B(0,r)}\frac{T_{h}f-f}{h} \ud h,
\]
with limit in $L^{p}(\R^{d})$ (see \cite[Theorem 1.1(e)]{kwasnicki2015ten}); here $T_{h}$ denotes the left translation and $C_{d,\sigma}$ is a constant only depending on $d$ and $\sigma$.

In the proof we want to use a formula as above for $f$ replaced by $\one_{\R^{d}_{+}}f$,  which in general is an irregular function even if $f$ is smooth; in particular, a priori it is not clear that $\one_{\R^{d}_{+}}f \in H^{\sigma,p}(\R^{d})$.  We overcome this technical obstacle by Proposition~\ref{prop:eq_norm_differences} below, which provides a (non sharp) representation formula for $(-\Delta)^{\sigma/2}$ in spaces of distributions.

For the proof of Proposition~\ref{prop:eq_norm_differences} we need the following simple identity.
\begin{lemma}\label{lem:eq_norm_differences;simple_identity}
For each $\sigma \in (0,1)$ there exists a constant $c_{d,\sigma} \in (-\infty,0)$ such that
\[
|\xi|^{\sigma} = c_{d,\sigma} \int_{\R^{d}} \frac{e^{\imath h \cdot \xi}-1}{|h|^{d+\sigma}} \ud h,
\quad\quad \xi \in \R^{d}.
\]
Moreover, for all $\phi\in \mathcal{S}(\R^d)$
\begin{align}\label{eq:lem:eq_fractional_derivative&integral;claim_integral_formula_S'}
[\xi \mapsto |\xi|^{\sigma}](\phi) := \int_{\R^{d}}|\xi|^{\sigma}\phi(\xi) \ud\xi
& = c_{d,\sigma}\int_{\R^{d}}\int_{\R}\frac{e^{\imath h\xi}-1}{|h|^{d+\sigma}}\phi(\xi) \ud\xi \ud h
\\ & =:c_{d,\sigma}\int_{\R^{d}}\left[\xi \mapsto \frac{e^{\imath h\xi}-1}{|h|^{d+\sigma}}\right](\phi) \ud h.
\nonumber
\end{align}
\end{lemma}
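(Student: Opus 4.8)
\textbf{Plan for the proof of Lemma~\ref{lem:eq_norm_differences;simple_identity}.}

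The strategy is to establish the scalar identity $|\xi|^\sigma = c_{d,\sigma}\int_{\R^d}(e^{\imath h\cdot\xi}-1)|h|^{-d-\sigma}\ud h$ by a direct computation, and then upgrade it to the tempered-distribution statement \eqref{eq:lem:eq_fractional_derivative&integral;claim_integral_formula_S'} by a Fubini argument. First I would observe that the integral on the right-hand side converges absolutely for no single $h$, so one must read it as an improper (principal-value-type) integral or exploit cancellation; the clean way is to note that by symmetry $h \mapsto -h$ the imaginary parts cancel, so
\[
\int_{\R^d}\frac{e^{\imath h\cdot\xi}-1}{|h|^{d+\sigma}}\ud h = \int_{\R^d}\frac{\cos(h\cdot\xi)-1}{|h|^{d+\sigma}}\ud h,
\]
and the integrand is now $O(|h|^{2-d-\sigma})$ near $0$ (integrable since $\sigma<1<2$) and $O(|h|^{-d-\sigma})$ near infinity (integrable since $\sigma>0$), so the right-hand integral is absolutely convergent. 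For the evaluation I would use rotation invariance: replacing $h$ by $Rh$ for a rotation $R$ with $R^\top\xi = |\xi|e_1$ shows the integral equals $g(|\xi|)$ for some function $g$, and then the scaling $h \mapsto h/|\xi|$ gives $g(|\xi|) = |\xi|^\sigma g(1)$. Setting $c_{d,\sigma} = \big(\int_{\R^d}(\cos(h_1)-1)|h|^{-d-\sigma}\ud h\big)^{-1}$, which is a negative finite number (the integrand is $\le 0$ and not identically zero, and the integral is finite by the estimates above), yields the claimed formula with $c_{d,\sigma}\in(-\infty,0)$.

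For the distributional identity \eqref{eq:lem:eq_fractional_derivative&integral;claim_integral_formula_S'}, the plan is to apply Fubini's theorem to interchange the $\xi$- and $h$-integrals in $c_{d,\sigma}\int_{\R^d}\int_{\R^d}\frac{e^{\imath h\cdot\xi}-1}{|h|^{d+\sigma}}\phi(\xi)\ud\xi\ud h$. The justification requires an integrable majorant for $(h,\xi)\mapsto \frac{|e^{\imath h\cdot\xi}-1|}{|h|^{d+\sigma}}|\phi(\xi)|$ on $\R^d\times\R^d$. Here I would split into $|h|\le 1$ and $|h|>1$: on $|h|\le 1$ use $|e^{\imath h\cdot\xi}-1|\le |h\cdot\xi|\le |h|\,|\xi|$, giving the bound $|h|^{1-d-\sigma}|\xi|\,|\phi(\xi)|$, which is integrable because $\int_{|h|\le1}|h|^{1-d-\sigma}\ud h<\infty$ (as $\sigma<1$) and $\xi\mapsto|\xi||\phi(\xi)|\in L^1$; on $|h|>1$ use $|e^{\imath h\cdot\xi}-1|\le 2$, giving $2|h|^{-d-\sigma}|\phi(\xi)|$, integrable since $\sigma>0$ and $\phi\in L^1$. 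Once Fubini applies, the inner $\xi$-integral of the left-hand side of the first equality is exactly $\int_{\R^d}|\xi|^\sigma\phi(\xi)\ud\xi$ by the pointwise identity just proved, and the notational rewriting into the pairing $\big[\xi\mapsto\frac{e^{\imath h\xi}-1}{|h|^{d+\sigma}}\big](\phi)$ is immediate. (Note the statement writes $\int_\R$ and $e^{\imath h\xi}$ in the middle expression; I would keep the $d$-dimensional notation $\int_{\R^d}$, $h\cdot\xi$ consistently, treating the one-dimensional display as a typo.)

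The main obstacle is purely a matter of care rather than depth: making precise in what sense the non-absolutely-convergent-looking integral $\int_{\R^d}(e^{\imath h\cdot\xi}-1)|h|^{-d-\sigma}\ud h$ is to be understood, and ensuring that the symmetrization trick (pairing $h$ with $-h$) is legitimate before one knows absolute convergence — the cleanest route is to define everything from the start via the symmetrized, manifestly absolutely convergent integrand $\cos(h\cdot\xi)-1$, prove the identity there, and only afterward remark that the $e^{\imath h\cdot\xi}-1$ form agrees with it by oddness of the sine part. With that convention fixed, both the scalar evaluation (rotation plus scaling) and the Fubini step for the $\mathcal{S}'$-statement are routine, and the sign $c_{d,\sigma}<0$ drops out automatically from $\cos(h\cdot\xi)-1\le 0$.
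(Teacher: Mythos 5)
Your proof is correct and follows essentially the same route as the paper: rotation invariance plus scaling for the scalar identity, and Fubini with the split $|h|\le 1$ versus $|h|>1$ for the tempered-distribution statement. The one small inaccuracy is your claim that $\int_{\R^d}\frac{e^{\imath h\cdot\xi}-1}{|h|^{d+\sigma}}\ud h$ fails to converge absolutely: since $\sigma\in(0,1)$, the elementary bound $|e^{\imath h\cdot\xi}-1|\le\min(|h|\,|\xi|,\,2)$ already yields absolute convergence (this is precisely the majorant both you and the paper use for Fubini), so the detour through $\cos(h\cdot\xi)-1$ is harmless but unnecessary.
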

\begin{proof}
Let $\xi \in \R^{d} \setminus \{0\}$ and choose $R \in \mathrm{O}(n)$ with $R\xi = |\xi|e_{1}$.
Then $h \cdot \xi = Rh \cdot R\xi = |\xi|Rh \cdot e_{1}$ and the substitution $y=|\xi|Rh$ yields
\[
\int_{\R^{d}} \frac{e^{\imath h \cdot \xi}-1}{|h|^{d+\sigma}} = |\xi|^{\sigma} \int_{\R^{d}}\frac{e^{\imath y_{1}}-1}{|y|^{d+\sigma}} \ud y.
\]
Observing that the integral on the right is a number in $(-\infty,0)$, the first identity follows.

Next we show \eqref{eq:lem:eq_fractional_derivative&integral;claim_integral_formula_S'}.
Given $\phi \in \mathcal{S}(\R^{d})$, the first identity gives
\[
[\xi \mapsto |\xi|^{\sigma}](\phi) = \int_{\R^{d}}|\xi|^{\sigma}\phi(\xi) \ud\xi
= c_{d,\sigma}\int_{\R^{d}}\int_{\R^{d}}\frac{e^{\imath h\xi}-1}{|h|^{d+\sigma}} \ud h\,\phi(\xi) \ud\xi.
\]
Since $\phi \in \mathcal{S}(\R^{d})$ and
\[
\frac{|e^{\imath h\xi}-1|}{|h|^{d+\sigma}} \leq 1_{|h| \leq 1}h^{-(d-1+\sigma)} |\xi| + 2\,\cdot\,1_{|h|>1}|h|^{-(d+\sigma)},
\]
we may invoke Fubini's theorem in order to get
\begin{align*}
[\xi \mapsto |\xi|^{\sigma}](\phi)
&= c_{d,\sigma}\int_{\R^{d}}\int_{\R}\frac{e^{\imath h\xi}-1}{|h|^{d+\sigma}}\phi(\xi) \ud\xi \ud h = c_{d,\sigma}\int_{\R^{d}}\left[\xi \mapsto \frac{e^{\imath h\xi}-1}{|h|^{d+\sigma}}\right](\phi) \ud h,
\end{align*}
as desired.
\end{proof}

For $f\in \Schw'(\R^d;X)$ let $\delta_{h} f= T_h f - f$, where $T_{h}$ denotes the left translation by $h$. For $0<r<R$ let $A(r,R) := \{ x \in \R^{d} : r < |x| < R \}$ be an annulus.

\begin{proposition}[Representation of $(-\Delta)^{\frac{\sigma}{2}}$]\label{prop:eq_norm_differences}
Let $p \in (1,\infty)$ and $\sigma \in (0,1)$.
For all $s \geq 0$ and $f \in H^{s,p}(\R^{d}) \otimes X \subset L^{p}(\R^{d};X)$ we have
\begin{equation*}
(-\Delta)^{\frac{\sigma}{2}} f = \frac{1}{c_{d,\sigma}}
\lim_{r \searrow 0, R \nearrow \infty} \left[ x \mapsto \int_{A(r,R)}\frac{\delta_{h}f(x)}{|h|^{d+\sigma}} \ud h \right]
\quad\quad \mbox{in} \quad H^{s-2,p}(\R^{d};X),
\end{equation*}
where $c_{d,\sigma}$ is the constant of Lemma \ref{lem:eq_norm_differences;simple_identity}.
\end{proposition}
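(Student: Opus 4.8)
The plan is to reduce the statement to the identity \eqref{eq:lem:eq_fractional_derivative&integral;claim_integral_formula_S'} of Lemma~\ref{lem:eq_norm_differences;simple_identity} tested against Schwartz functions, using the density of $H^{s,p}(\R^{d}) \otimes X$ and the mapping properties from Lemma~\ref{lem:fractionalLaplace}. First I would fix $f \in H^{s,p}(\R^{d}) \otimes X \subset L^{p}(\R^{d};X)$; by linearity and the tensor structure it suffices to treat the scalar case $f \in H^{s,p}(\R^{d})$, the $X$-valued case following by applying a scalar result coordinatewise on the finite tensor. For $0 < r < R$ set $g_{r,R}(x) = \int_{A(r,R)} \frac{\delta_{h}f(x)}{|h|^{d+\sigma}}\,\ud h$. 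The first task is to check $g_{r,R} \in H^{s-2,p}(\R^{d})$ with a uniform bound: split the annulus into $\{r < |h| \leq 1\}$ and $\{1 < |h| < R\}$. On the inner part, since $\sigma < 1$, the first-order Taylor estimate $\|\delta_{h}f\|_{H^{s-2,p}} \lesssim |h|\,\|f\|_{H^{s-1,p}} \leq |h|\,\|f\|_{H^{s,p}}$ (from boundedness of $\partial_{j}$ and translation invariance of the spaces) makes $\int_{r < |h| \leq 1} |h|^{1-d-\sigma}\,\ud h$ finite uniformly in $r$; on the outer part, $\|\delta_{h}f\|_{H^{s-2,p}} \leq 2\|f\|_{H^{s-2,p}} \leq 2\|f\|_{H^{s,p}}$ and $\int_{|h|>1} |h|^{-d-\sigma}\,\ud h < \infty$. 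Hence the limit $g := \lim_{r \searrow 0,\, R \nearrow \infty} g_{r,R}$ exists in $H^{s-2,p}(\R^{d})$ (Cauchy in $r,R$ by the same tail estimates), and it remains only to identify $g$ with $c_{d,\sigma}\,(-\Delta)^{\sigma/2}f$.

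To make the identification I would test against an arbitrary $\phi \in \mathcal{S}(\R^{d})$ via the duality $H^{s-2,p}(\R^{d}) \hookrightarrow \mathcal{S}'(\R^{d})$ (Lemma~\ref{lem:densityCcH}). Since convergence in $H^{s-2,p}$ implies convergence in $\mathcal{S}'$, we have $g(\phi) = \lim_{r,R} g_{r,R}(\phi)$. Now $g_{r,R}(\phi) = \int_{\R^{d}} \int_{A(r,R)} \frac{\delta_{h}f(x)}{|h|^{d+\sigma}}\,\ud h\, \phi(x)\,\ud x$; since $f \in L^{p}$, $\phi \in \mathcal{S}$, and the $h$-integral is over a bounded region away from $0$, Fubini applies, and using $\int \delta_{h}f(x)\phi(x)\,\ud x = \int f(x)(T_{-h}\phi - \phi)(x)\,\ud x$ together with Plancherel/the Fourier-transform definition of the pairing, one rewrites $g_{r,R}(\phi)$ as a spectral-side integral $\int_{A(r,R)} [\xi \mapsto \frac{e^{\imath h \xi}-1}{|h|^{d+\sigma}}](\,\wh{f}\,\wh{\phi}\,\text{-type pairing})\,\ud h$. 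Passing to the limit and invoking \eqref{eq:lem:eq_fractional_derivative&integral;claim_integral_formula_S'} (whose dominated-convergence justification over all of $\R^{d}$ is exactly the content of that lemma) identifies the limit as $c_{d,\sigma}$ times the pairing of $|\xi|^{\sigma}\wh{f}$, i.e.\ $c_{d,\sigma}\,(-\Delta)^{\sigma/2}f$, tested against $\phi$. Since $\phi \in \mathcal{S}$ was arbitrary and both sides lie in $H^{s-2,p}(\R^{d})$, the two distributions coincide.

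The main obstacle is organizing the interchange of limits cleanly: one must simultaneously control the $h \to 0$ truncation (handled by the $O(|h|)$ difference estimate, which crucially needs $\sigma < 1$ for the weak singularity $|h|^{1-d-\sigma}$ to be integrable near $0$) and the $h \to \infty$ truncation, and then transfer the resulting convergence from $H^{s-2,p}(\R^{d})$ down to the $\mathcal{S}'$ pairing where Lemma~\ref{lem:eq_norm_differences;simple_identity} can be applied verbatim. A minor subtlety is that $(-\Delta)^{\sigma/2}f$ for $f \in H^{s,p}$ lives a priori only in $H^{s-\sigma,p} \supseteq H^{s-2,p}$ (as $\sigma < 1 < 2$), so the target space $H^{s-2,p}$ in the statement is the natural common home for both sides and no regularity is lost. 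The passage to the $X$-valued case is purely formal since everything is tested against scalar Schwartz functions and $f$ ranges over a finite tensor $H^{s,p}(\R^{d}) \otimes X$.
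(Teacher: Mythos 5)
Your proposal is correct and follows essentially the paper's route: you split the annulus at $|h|=1$, control the inner part with a first-order difference estimate (using $\sigma<1$ for integrability of $|h|^{1-d-\sigma}$) and the outer part trivially, and identify the limit with $c_{d,\sigma}(-\Delta)^{\sigma/2}f$ via Lemma~\ref{lem:eq_norm_differences;simple_identity} by testing against Schwartz functions after reducing to the scalar case; this matches the paper's three-step proof (whose step~(1) packages your Cauchy estimate as $L^1$-in-$h$ boundedness and whose step~(3) performs the same Fourier-side identification after a density reduction to $\mathcal{S}(\R^d)$, which you should also invoke explicitly rather than pair $\wh f$ informally with $\phi$). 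One nuance: your estimate $\|\delta_h f\|_{H^{s-2,p}}\lesssim|h|\|f\|_{H^{s-1,p}}$ via boundedness of $\partial_j$ on the Bessel scale is UMD-dependent in the vector-valued setting, which the paper avoids by reducing to $s=2$ and using the UMD-free embedding $H^{2,p}\hookrightarrow W^{1,p}$ of Lemma~\ref{lem:compHW}; since you reduce to the scalar case first (which suffices for $f\in H^{s,p}(\R^d)\otimes X$), this is harmless, though note the inclusion near the end should read $H^{s-\sigma,p}\subseteq H^{s-2,p}$, not $\supseteq$.
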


The weights are left out on purpose, because translations are not well-behaved on weighted $L^p$-spaces. Moreover, no UMD is required  in the result above.

\begin{proof}
We prove this proposition by proving the following three statements:
\begin{enumerate}
\item The linear operator
\[
f \mapsto \left[ h \mapsto \frac{\delta_{h}f}{|h|^{d+\sigma}} \right]
\]
is bounded from $H^{s,p}(\R^{d};X)$ to $L^{1}(\R^{d};H^{s-2,p}(\R^{d};X))$ for all $s \in \R$ and thus gives rise to a bounded linear operator
\[
\mathcal{I}_{\sigma} : H^{s,p}(\R^{d};X) \longrightarrow H^{s-2,p}(\R^{d};X),\,f \mapsto \int_{\R^{d}}\frac{\delta_{h}f}{|h|^{d+\sigma}} \ud h,
\]

\item For all $s \geq 0$ we have
\[
\mathcal{I}_{\sigma} f =
\lim_{r \searrow 0, R \nearrow \infty} \left[ x \mapsto \int_{A(r,R)}\frac{\delta_{h}f(x)}{|h|^{d+\sigma}} \ud h \right]
\quad\quad \mbox{in} \quad H^{s-2,p}(\R^{d};X)
\]
for every $f \in H^{s,p}(\R^{d};X) \subset L^{p}(\R^{d};X)$.

\item For all $f\in H^{-\infty,p}(\R^{d})\otimes X$,
\begin{equation}\label{eq:identityfract}
\mathcal{I}_\sigma f = c_{d,\sigma}(-\Delta)^{\frac{\sigma}{2}} f \ \ \ \text{in} \ \ \ \Schw'(\R^d;X),
\end{equation}
where $c_{d,\sigma}$ is the constant of Lemma \ref{lem:eq_norm_differences;simple_identity}.
Here $H^{-\infty,p}(\R^{d}) = \bigcup_{s \in \R}H^{s,p}(\R^{d})$.
\end{enumerate}

(1): To prove this it is enough to establish the boundedness from $H^{s,p}(\R^{d};X)$ to $L^{1}(\R^{d};H^{s-2,p}(\R^{d};X))$.
As the Bessel potential operator $\mathcal{J}_{s}$ commutes with $\delta_{h}$, we may restrict ourselves to the case $s=2$.
Since by Lemma \ref{lem:compHW} $H^{2,p}(\R^{d};X)\hookrightarrow W^{1,p}(\R^d;X)$, we only need to estimate
\begin{equation}\label{eq:lem:eq_norm_differences;L1_estimate}
\int_{\R^{d}}\frac{\|\delta_{h}f\|_{L^p(\R^{d};X)}}{|h|^{d+\sigma}} \ud h \lesssim_{d,\sigma, p} \|f\|_{W^{1,p}(\R^{d};X)},
\quad\quad f \in W^{1,p}(\R^{d};X).
\end{equation}
To this end, let $f \in W^{1,p}(\R^{d};X)$.
Then
\[
\frac{\delta_{h}f}{h^{1+\sigma}}
= 1_{|h| \leq 1}|h|^{-(d-1+\sigma)}\int_{0}^{1} T_{th}\left[\nabla f \cdot \frac{h}{|h|}\right] \ud t + 1_{|h| > 1}|h|^{-(d+\sigma)}( T_{h}f-f ),
\]
where the integral is an $L^{p}(\R^{d};X)$-valued Bochner integral.
It follows that
\begin{align*}
\frac{\|\delta_{h}f\|_{L^p(\R^{d};X)}}{|h|^{d+\sigma}}
&\leq 1_{|h| \leq 1}|h|^{-(d-1+\sigma)} \int_{0}^{1}\| T_{th}\|\nabla f\|_{X^{d}}\|_{L^{p}(\R^{d})} \ud t \\
& \quad\quad +\: 1_{|h| > 1}|h|^{-(d+\sigma)}\left( \|T_{h}f\|_{L^{p}(\R;X)} + \|f\|_{L^{p}(\R;X)} \right) \\
&= 1_{|h| \leq 1}h^{-(d-1+\sigma)} \|\nabla f\|_{L^{p}(\R;X^{d})} +
2\,\cdot\,1_{|h| > 1}|h|^{-(d+\sigma)}\|f\|_{L^{p}(\R;X)}.
\end{align*}
Integrating over $h$ gives \eqref{eq:lem:eq_norm_differences;L1_estimate}.

(2): Let $s \geq 0$ and $f \in H^{s,p}(\R^{d};X) \subset L^{p}(\R^{d};X)$.
By the first assertion and the Lebesgue dominated convergence theorem,
\begin{equation}\label{eq:lem:eq_norm_differences;convergence_integrals}
\mathcal{I}_{\sigma} f =
\lim_{r \searrow 0, R \nearrow \infty} \int_{A(r,R)}\frac{\delta_{h}f}{|h|^{d+\sigma}} \ud h
\quad\quad \mbox{in} \quad H^{s-2,p}(\R^{d};X),
\end{equation}
where the integrals $\int_{A(r,R)}\frac{\delta_{h}f}{|h|^{d+\sigma}} \ud h $ are Bochner integrals in $H^{s-2,p}(\R^{d};X)$.
As $f \in L^{p}(\R^{d};X)$, $h \mapsto \frac{\delta_{h}f}{|h|^{d+\sigma}}$ is in $L^{1}(A(r,R);L^{p}(\R^{d};X))$ for every $0<r<R<\infty$. Since $L^{p}(\R^{d};X),H^{s-2,p}(\R^{d};X) \hookrightarrow \mathcal{S}'(\R^{d};X)$, it follows that the integrals $\int_{A(r,R)}\frac{\delta_{h}f}{|h|^{d+\sigma}} \ud h $ in \eqref{eq:lem:eq_norm_differences;convergence_integrals} can also be considered as Bochner integrals in $L^{p}(\R^{d};X)$, implying that $\int_{A(r,R)}\frac{\delta_{h}f}{|h|^{d+\sigma}} \ud h = \left[ x \mapsto \int_{A(r,R)}\frac{\delta_{h}f(x)}{|h|^{d+\sigma}} \ud h \right]$ (see \cite[Proposition~1.2.25]{HNVW1}).

(3) By linearity it suffices to consider the scalar case $f\in H^{s,p}(\R^d)$ for some $s\in \R$. By the density of $\mathcal{S}(\R^{d})\subseteq H^{s,p}(\R^d)$ (see Lemma \ref{lem:densityCcH}) it suffices to consider $f\in \mathcal{S}(\R^{d})$. Indeed, this follows from the boundedness of $\mathcal{I}_{\sigma}$ and $(-\Delta)^{\sigma/2}$ (see (1). Now \eqref{eq:identityfract} follows from well-known results (see \cite[Theorem 1.1(e)]{kwasnicki2015ten}). For convenience we include a direct proof.
Using Lemma \ref{lem:eq_norm_differences;simple_identity}, for each $f \in \mathcal{S}(\R^{d};X)$ we find
\begin{align*}
(-\Delta)^{\sigma/2}f
&= \mathscr{F}^{-1}[(\xi \mapsto |\xi|^{\sigma})\wh{f}]
 = \mathscr{F}^{-1}\left[ c_{d,\sigma}\int_{\R^{d}}\left[ \xi \mapsto \frac{e^{\imath h\xi}-1}{|h|^{d+\sigma}}\wh{f}(\xi) \right] \ud h  \right] \\
&= c_{d,\sigma}\int_{\R^{d}} \mathscr{F}^{-1}\left[ \xi \mapsto \frac{e^{\imath h\xi}-1}{|h|^{d+\sigma}}\wh{f}(\xi) \right] \ud h
 = c_{d,\sigma}\int_{\R^{d}} \frac{\delta_{h}f}{|h|^{d+\sigma}} \ud h,
\end{align*}
where all integrals are in $\mathcal{S}'(\R^{d};X)$.
By (1), for every $f \in \mathcal{S}(\R^{d};X) \subset H^{0,p}(\R^{d};X)$ we have $\mathcal{I}_{\sigma}f = \int_{\R^{d}} \frac{\delta_{h}f}{|h|^{d+\sigma}} \ud h$, where the integral is taken in $H^{-1,p}(\R^{d};X) \hookrightarrow \mathcal{S}'(\R^{d};X)$.
This proves \eqref{eq:identityfract}, as desired.
\end{proof}

Finally we are in position to prove the pointwise multiplier result.

\begin{proof}[Proof of Theorem \ref{thm:pointwise_multiplier}]
We only consider $s\geq 0$. The case $s<0$ follows from a duality argument using \cite[Proposition 3.5]{MeyVerpoint}.

By Lemma \ref{lem:densityCcH} it is enough to prove $\|\one_{\R^{d}_{+}}f\|_{H^{s,p}(\R^{d},w_{\gamma};X)} \lesssim_{s,p,d,\gamma,X} \|f\|_{H^{s,p}(\R^{d},w_{\gamma};X)}$ for an arbitrary $f \in \mathcal{S}(\R^{d})\otimes X$. Let $g:=\one_{\R^{d}_{+}}f \in L^{p}(\R^{d})\otimes X$. By Lemma \ref{lem:fractionalLaplace}, we have
\[\|g\|_{H^{s,p}(\R^{d},w_{\gamma};X)}\lesssim_{s,p,d,\gamma,X} \|g\|_{L^p(\R^{d},w_{\gamma};X)} + \|(-\Delta)^{s/2}g\|_{L^{p}(\R^{d},w_{\gamma};X)}.\]
Clearly, $\|g\|_{L^p(\R^{d},w_{\gamma};X)}\leq \|f\|_{L^p(\R^{d},w_{\gamma};X)}$
from which we see that it suffices to show
\begin{equation}\label{eq:suffestg}
\|(-\Delta)^{s/2}g\|_{L^{p}(\R^{d},w_{\gamma};X)} \lesssim_{s,p,d,\gamma} \|f\|_{H^{s,p}(\R^{d},w_{\gamma};X)}.
\end{equation}
By Proposition~\ref{prop:eq_norm_differences},
\[
\mathcal{I}_{s,j}g  :=
\left[ x \mapsto \int_{A(\frac{1}{j},j)}\frac{\delta_{h}g(x)}{|h|^{d+s}} \ud h \right] \stackrel{j \to \infty}{\longrightarrow} (-\Delta)^{s/2}g
\quad\quad \mbox{in} \quad H^{s-2,p}(\R^{d};X) \hookrightarrow \mathcal{S}'(\R^{d};X).
\]
In order to finish the proof, it is thus enough to show that $\mathcal{I}_{s,j}g$ converges in $L^{p}(\R^{d},w_{\gamma};X) + L^{p}(\R^{d};X) \hookrightarrow \mathcal{S}'(\R^{d};X)$ to some $G$ satisfying
\begin{equation}\label{eq:toproveGsum}
\|G\|_{L^{p}(\R^{d},w_{\gamma};X)} \lesssim_{s,p,d,\gamma,X} \|f\|_{H^{s,p}(\R^{d},w_{\gamma};X)}.
\end{equation}
Indeed, then $(-\Delta)^{s/2}g = G$ and \eqref{eq:suffestg} holds.

Defining
\[
S := \left\{ (y,z) \in \R^{2} : [ z < -y \:\mbox{and}\: y>0] \:\:\mbox{or}\:\: [ z > -y \:\mbox{and}\: y<0] \right\}
\]
we have
\begin{align}
\mathcal{I}_{s,j}g
&= G_{1,j} + G_{2,j} \nonumber \\
&:= \one_{\R^{d}_{+}}\mathcal{I}_{s,j}f + \left[ x \mapsto -\mathrm{sgn}(x_{1})\int_{A(\frac{1}{j},j)}\one_{S}(x_{1},h_{1})\frac{f(x+h)}{|h|^{d+s}} \ud h \right],
\label{eq:thm:pointwise_multiplier;formula_case_split}
\end{align}
where $\mathcal{I}_{s,j}f$ is defined analogously to $\mathcal{I}_{s,j}g$:
\[
\mathcal{I}_{s,j}f  :=
\left[ x \mapsto \int_{A(\frac{1}{j},j)}\frac{\delta_{h}f(x)}{|h|^{d+s}} \ud h \right].
\]

We first consider $\{G_{1,j}\}_{j \in \N}$.
Since $\mathcal{I}_{s,j}f  \stackrel{j \to \infty}{\longrightarrow} (-\Delta)^{s/2}f$ in $L^{p}(\R^{d};X)$ by Proposition~\ref{prop:eq_norm_differences},
it follows that $G_{1} := \one_{\R^{d}_{+}}(-\Delta)^{s/2}f = \lim_{j \to \infty}G_{1,j}$ in $L^{p}(\R^{d};X)$.
By Proposition Lemma~\ref{lem:fractionalLaplace},
\begin{align*}
\|G_{1}\|_{L^{p}(\R^{d},w_{\gamma};X)} & \leq \|(-\Delta)^{s/2}f\|_{L^{p}(\R^{d},w_{\gamma};X)} \lesssim_{s,p,d,\gamma,X} \|f\|_{H^{s,p}(\R^{d},w_{\gamma};X)}.
\end{align*}

We next consider $\{G_{2,j}\}_{j \in \N}$.
Observing that
\[
|h| = (|h_{1}|^{2}+|\tilde{h}|^{2})^{1/2} = ((|t|+|h_{1}+t|)^{2}+|\tilde{h}|^{2})^{1/2}
\]
for all $h=(h_{1},\tilde{h}) \in \R^{d}$ and $t \in \R$ with $(t,h_{1}) \in S$, we find
\begin{align*}
\int_{A(\frac{1}{j},j)}\one_{S}(x_{1},h_{1})\frac{\|f(x+h)\|_{X}}{|h|^{d+s}} \ud h
&\leq \int_{\R^{d}}\frac{\|f(x+h)\|_{X}}{((|x_{1}|+|h_{1}+x_{1}|)^{2}+|\tilde{h}|^{2})^{\tfrac{d+s}{2}}} \ud h \\
&= \int_{\R^{d}}\frac{\|f(y)\|_{X}}{((|x_{1}|+|y_{1}|)^{2}+|\tilde{y}-\tilde{x}|^{2})^{\tfrac{d+s}{2}}} \ud y \\
&\leq \int_{\R^{d}}k(x,y)|y_{1}|^{-s}\|f(y)\|_{X} \ud y,
\end{align*}
where $k(x,y) = ((|x_{1}|+|y_{1}|)^{2}+|\tilde{y}-\tilde{x}|^{2})^{\tfrac{d}{2}}$.
Applying Lemma~\ref{lem:HardyHilbert} to the function $\phi(y) = |y_{1}|^{-s} \|f(y)\|_{X}$
we thus obtain
\begin{align*}
\Big\| x \mapsto \int_{A(\frac{1}{j},j)}\one_{S}(x_{1},h_{1})\frac{\|f(x+h)\|_{X}}{|h|^{d+s}} \ud h \Big\|_{L^{p}(\R^{d},w_{\gamma})}
&\leq \|I_k \phi\|_{L^p(\R^{d},w_{\gamma})} \\
&\lesssim_{p,d,\gamma} \|\phi\|_{L^p(\R^{d},w_{\gamma})} \\
&= \|f\|_{L^p(\R^{d},w_{\gamma-s p};X)}.
\\ & \lesssim_{p,d,\gamma} \|f\|_{H^{s,p}(\R^{d},w_{\gamma};X)},
\end{align*}
where in the last step we applied Lemma \ref{lem:Hardypp}. It follows that the limit $G_{2} := \lim_{j \to \infty}G_{2,j}$ exists in $L^{p}(\R^{d},w_{\gamma};X)$ and, moreover,
\[\|G_{2}\|_{L^{p}(\R^{d},w_{\gamma};X)} \lesssim_{p,d,\gamma} \|f\|_{H^{s,p}(\R^{d},w_{\gamma};X)}.\]

Finally, combining the just obtained results for $\{G_{1,j}\}_{j \in \N}$ and $\{G_{2,j}\}_{j \in \N}$, we see that
$G:= G_{1} + G_{2} = \lim_{j \to \infty }\mathcal{I}_{s,j}g$ in $L^{p}(\R^{d},w_{\gamma};X) + L^{p}(\R^{d};X) \hookrightarrow \mathcal{S}'(\R;X)$
and \eqref{eq:toproveGsum} holds as desired.
\end{proof}

\section{Interpolation theory without boundary conditions}\label{sec:interpo}

For details on interpolation theory we refer the reader to \cite{BeLo,Tr1}. In this section we present some weighted and vector-valued versions of known results.

The following extension operator will allow us to reduce the half space case $\R^{d}_+$ to the full space $\R^d$.
\begin{lemma}[Extension operator]\label{lem:extension}
Let $X$ be a Banach space. Let $p\in (1, \infty)$, and $m\in \N_0$.
Let $w\in A_p$ be such that $w(-x_1,\tilde{x}) = w(x_1, \tilde{x})$ for $x_1\in \R$ and $\tilde{x}\in \R^{d-1}$. Then there exists an operator $\mathcal{E}_+^m:L^{p}(\R^d_+,w;X)\to L^{p}(\R^d,w;X)$ such that
\begin{enumerate}
\item For all $f\in L^p(\R^d_+,w;X)$, $(\mathcal{E}_+^m f)|_{\R^d_+} = f$;
\item for all $k\in \{0,\ldots, ,m\}$, $\mathcal{E}_+^m:W^{k,p}(\R_+^d,w;X)\to W^{k,p}(\R^d,w;X)$ is bounded,
\end{enumerate}
Moreover, if $f\in L^p(\R^d_+,w;X)\cap C^{m}(\R^d_+;X)$, then $\mathcal{E}_+^m f$ is $m$-times continuous differentiable on $\R^d$.
\end{lemma}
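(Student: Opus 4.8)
The plan is to construct $\mathcal{E}_+^m$ by the classical Hestenes--Seeley reflection, i.e., a finite linear combination of reflections in the $x_1$-variable. Explicitly, for $f:\R^d_+\to X$ and $x_1<0$ we set
\[
(\mathcal{E}_+^m f)(x_1,\tilde{x}) := \sum_{k=0}^{m} a_k\, f(-\lambda_k x_1, \tilde{x}),
\]
where $\lambda_0,\dots,\lambda_m>0$ are fixed distinct numbers (e.g. $\lambda_k = k+1$, or $\lambda_k = 2^{-k}$) and the coefficients $a_0,\dots,a_m$ are chosen so that $\sum_k a_k(-\lambda_k)^j = 1$ for $j=0,1,\dots,m$. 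This Vandermonde system is solvable because the $\lambda_k$ are distinct. The matching condition at $x_1=0$ then guarantees that, for $f\in C^m(\overline{\R^d_+};X)$, all derivatives $\partial_1^j$ up to order $m$ glue continuously across the hyperplane $\{x_1=0\}$, which gives the final ``moreover'' assertion; more precisely, one first checks the statement for such smooth $f$ and then notes that a function in $L^p(\R^d_+,w;X)\cap C^m(\R^d_+;X)$ whose derivatives up to order $m$ extend continuously to the boundary is covered, while the general $C^m(\R^d_+;X)$ case is handled by the $C^m$-gluing lemma applied on each half.

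First I would verify property (1), which is immediate since the formula reproduces $f$ on $\R^d_+$ by definition. Next I would prove boundedness on $L^p(\R^d_+,w;X)$: each summand is the composition of $f$ with the dilation $x_1\mapsto -\lambda_k x_1$ in one variable, and a change of variables shows $\|f(-\lambda_k\,\cdot\,,\tilde\cdot)\|_{L^p(\R^d_-,w;X)} = \lambda_k^{-1/p}\big(\int_{\R^d_+} \|f(y_1,\tilde y)\|^p w(y_1/\lambda_k,\tilde y)\,dy\big)^{1/p}$, so I need the weight to be comparable under the dilation $y_1\mapsto y_1/\lambda_k$. For a general $A_p$ weight this is \emph{not} automatic, so the clean approach is to restrict the dilation factors to $\lambda_k\in\{1\}\cup$ a set on which $w$ behaves, or — better — to observe that for the intended applications $w=w_\gamma$ is a power weight, which is exactly dilation-covariant: $w_\gamma(y_1/\lambda,\tilde y) = \lambda^{-\gamma} w_\gamma(y_1,\tilde y)$. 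I would therefore either add the hypothesis that $w$ is a power weight $w_\gamma$ (consistent with everything downstream), or restrict to $\lambda_k$ rational and use the doubling property of $A_p$ weights to get the comparison $w(y_1/\lambda_k,\tilde y)\lesssim w(y_1,\tilde y)$ up to a constant depending on $[w]_{A_p}$ and $\lambda_k$; since here we only need the weighted $L^p$-\emph{norm} to be dominated, the one-sided comparison together with the fact that $A_p$ is stable under the linear change of variables suffices.

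Then for (2) I would differentiate the reflection formula: for $x_1<0$, $\partial_1^j (\mathcal{E}_+^m f)(x_1,\tilde x) = \sum_k a_k(-\lambda_k)^j (\partial_1^j f)(-\lambda_k x_1,\tilde x)$ and similarly for tangential derivatives $\partial_{\tilde x}^{\tilde\alpha}$, which commute with the reflection. Hence $\partial^\alpha \mathcal{E}_+^m f$ on $\R^d_-$ is again a finite combination of dilated copies of $\partial^\alpha f$ for $|\alpha|\le m$, and the $L^p(w)$-bound from the previous step applies termwise. To see that $\mathcal{E}_+^m f\in W^{k,p}(\R^d,w;X)$ — and not merely that it has the right derivatives on each open half-space — I would approximate $f$ by functions in $C^\infty_c(\overline{\R^d_+})\otimes X$ using Lemmas \ref{lem:densityCcW} and \ref{lem:convolution} (a density argument on the half-space), apply the smooth case where the $C^k$-matching at $x_1=0$ makes $\mathcal{E}_+^m f$ genuinely $C^k$ hence weakly differentiable with the expected derivatives, and pass to the limit using the uniform bounds just established. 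The main obstacle, and the place to be careful, is precisely this interaction between the reflection dilations and the weight: ensuring the change-of-variables estimate $w(y_1/\lambda_k,\tilde y)\lesssim_{\lambda_k,[w]_{A_p}} w(y_1,\tilde y)$ holds (trivial for power weights, a short doubling argument for general $A_p$), after which everything else is the classical Seeley construction together with routine density.
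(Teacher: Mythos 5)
Your approach is the same one the paper implicitly uses: the paper's proof is a one-line reference to the Seeley--Hestenes reflection of \cite[Theorem 5.19]{AF03}, declared to carry over ``to the weighted setting,'' and you reconstruct exactly that reflection from scratch. You are right to flag the weight/dilation compatibility as the one genuinely non-trivial point; the paper's proof does not address it at all. Your first suggested resolution --- restricting to weights of the form $w_\gamma(x)=|x_1|^\gamma$ (or more generally $|x_1|^\gamma v(\tilde x)$ with $v\in A_p(\R^{d-1})$), which are exactly covariant under dilations in $x_1$ --- is correct and covers every downstream use in the paper.

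Your second suggested resolution for general symmetric $A_p$ weights, however, has a real gap. You invoke ``doubling'' and ``stability of $A_p$ under linear changes of variables'' to conclude a pointwise comparison $w(y_1/\lambda_k,\tilde y)\lesssim_{\lambda_k,[w]_{A_p}} w(y_1,\tilde y)$. Neither ingredient gives a pointwise bound: $A_p$-invariance under $x_1\mapsto\lambda x_1$ is a statement about $A_p$ \emph{constants}, and doubling compares the cube averages $w(\lambda Q)$ and $w(Q)$; both are averaged, not pointwise, properties. The pointwise comparison is in fact false for general symmetric $A_p$ weights. For example, take $w(x)=|x_1^2-1|^{\alpha}$ with $\alpha\in(0,p-1)$ (suitably truncated at infinity); this is a symmetric $A_p$ weight with a power-type zero along $\{x_1=\pm 1\}$, and for any fixed $\lambda\neq 1$ the ratio $w(y_1/\lambda,\tilde y)/w(y_1,\tilde y)$ blows up as $y_1\to 1$. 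Testing on $f$ concentrated near $\{y_1=1\}$ shows that the single reflection term $x\mapsto f(-\lambda x_1,\tilde x)$ is already \emph{unbounded} on $L^p(\R^d,w;X)$, and since boundedness of $\mathcal{E}_+^m$ on $L^p(w)$ is precisely the statement that $\|g(\cdot/\lambda)\|_{L^p(w)}\lesssim\|g\|_{L^p(w)}$ for each reflection dilation, the finite Seeley sum cannot be bounded for such $w$. So for arbitrary symmetric $A_p$ weights the construction would have to be replaced by something like Stein's extension operator, which superposes a continuum of dilations against a rapidly decaying kernel; its relevant terms are dominated by the Hardy--Littlewood maximal function and hence are $L^p(w)$-bounded for all $w\in A_p$. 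In short: your worry is the right one and your power-weight fix is correct, but the $A_p$-doubling argument does not close the gap --- and, strictly speaking, neither does the paper's one-line proof as stated for general symmetric $A_p$ weights.
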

By a reflection argument the same holds for $\R^d_-$. The corresponding operator will be denoted by $\mathcal{E}_-^m$.
\begin{proof}
The result is a simple extension of the classical construction given in \cite[Theorem 5.19]{AF03} to the weighted setting.
The final assertion is clear from the construction of $\mathcal{E}_+^m$.
\end{proof}

To define Bessel potential spaces on domains, we proceed in an abstract way using factor spaces.
\begin{definition}\label{def:factor}
Let $\mathbb{F} \hookrightarrow \Distr'(\R^d;X)$ be a Banach space.
Define the {\em restricted space/factor space} to an open set $\Omega\subseteq \R^d$ as
\[\mathbb{F}(\Omega) := \{ f \in \Distr'(\R^{d};X) : \exists g\in \mathbb{F}, f = g|_{\Omega} \} \] and let
\[\|f\|_{\mathbb{F}(\Omega)} = \inf\{\|g\|_{\mathbb{F}}: g|_{\Omega} = f\}.\]
We say that $\mathcal{E}$ is an {\em extension operator for $\mathbb{F}(\Omega)$} if
\begin{enumerate}
\item for all $f\in \mathbb{F}(\Omega)$, $(\mathcal{E} f)|_{\Omega} = f$;
\item $\mathcal{E}:\mathbb{F}(\Omega)\to \mathbb{F}$ is bounded.
\end{enumerate}
\end{definition}

For $p \in (1,\infty)$, $w \in A_{p}$ and an open set $\Omega \subset \R^{d}$, we define the Bessel potential space $H^{s,p}(\Omega,w;X)$ as the factor space
\[
H^{s,p}(\Omega,w;X) := [H^{s,p}(\R^{d},w;X)](\Omega).
\]

By Lemma \ref{lem:extension} and for $w$ as stated there, we find that $W^{k,p}(\R_+^d,w;X)$ can be identified (up to an equivalent norm) with the factor space $[W^{k,p}(\R^{d},w;X)](\R^{d}_{+})$, where an extension operator can also be found.
Indeed, let $W^{k,p}_{\rm factor}(\R_+^d,w;X) = [W^{k,p}(\R^{d},w;X)](\R^{d}_{+})$ denote the factor space. For $f\in W^{k,p}_{\rm factor}(\R^d,w;X)$ let $g\in W^{k,p}(\R^d,w;X)$ be such that $g|_{\R^d_+} = f$. Then
\[\|f\|_{W^{k,p}(\R_+^d,w;X)} \leq \|g\|_{W^{k,p}(\R^d,w;X)}.\]
Taking the infimum over all of the above $g$, we find
\[\|f\|_{W^{k,p}(\R_+^d,w;X)} \leq \|f\|_{W^{k,p}_{\rm factor}(\R^d_+,w;X)}.\]
Next let $f\in W^{k,p}(\R_+^d,w;X)$. Then $\mathcal{E}_+ f\in W^{k,p}(\R^d,w;X)$ and
\[\|f\|_{W^{k,p}_{\rm factor}(\R^d_+,w;X)} \leq \|\mathcal{E}_+ f\|_{W^{k,p}(\R^d,w;X)} \leq C\|f\|_{W^{k,p}(\R^d_+,w;X)}.\]

Next we present two abstract lemmas to identify factor spaces in the complex interpolation scale. The result is a straightforward consequence of \cite[Theorem~1.2.4]{Tr1}. We include the short in order to be able to track the constants. For details on complex interpolation theory we refer to \cite[Section~1.9.3]{Tr1}.

\begin{lemma}\label{lem:coretract}
Let $(X_0, X_1)$ and $(Y_0, Y_1)$ be interpolation couples and let $X_{\theta} = [X_0, X_1]_{\theta}$ and $Y_{\theta} = [Y_0, Y_1]_{\theta}$ for a given $\theta\in (0,1)$. Assume $R:X_0+X_1\to Y_0+Y_1$ and $S:Y_0+Y_1\to X_0+X_1$ are linear operators such that $S\in \calL(Y_j, X_j)$, $R\in \calL(X_j, Y_j)$ and $RS$ is the identity operator on $Y_j$ for $j\in \{0,1\}$. Then
$SR$ defines a projection on $X_{\theta}$ and
$R$ is an isomorphism from $SR(X_\theta)$ onto $Y_{\theta}$ with inverse $S$. Moreover, the following estimates hold:
\begin{align*}
C_S^{-1} \|Sy\|_{X_{\theta}} \leq \|y\|_{Y_{\theta}}  & \leq C_R \|Sy\|_{X_{\theta}}, \ \ \ y\in Y_{\theta},
\\ \|Rx\|_{Y_{\theta}} &\leq C_R \|x\|_{X_{\theta}}, \ \ \ x\in X_{\theta},
\\ \|x\|_{X_{\theta}} &\leq C_S \|Rx\|_{Y_{\theta}}, \ \ \ x\in SR(X_{\theta}),
\end{align*}
where $C_R = \max_{j\in \{0,1\}} \|R\|_{\calL(X_j,Y_j)}$ and $C_S = \max_{j\in \{0,1\}} \|S\|_{\calL(X_j,Y_j)}$.
\end{lemma}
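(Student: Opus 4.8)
The plan is to follow the standard ``retract/coretract'' argument from \cite[Theorem~1.2.4]{Tr1}, being careful to extract explicit constants from each step. First I would observe that since $RS = \mathrm{id}_{Y_j}$ for $j\in\{0,1\}$, the operator $RS$ is also the identity on $Y_0+Y_1$, and hence $P := SR$ satisfies $P^2 = S(RS)R = SR = P$ on $X_0 + X_1$. Since $S\in\calL(Y_j,X_j)$ and $R\in\calL(X_j,Y_j)$ compatibly, interpolation gives $S\in\calL(Y_\theta,X_\theta)$ and $R\in\calL(X_\theta,Y_\theta)$ with the stated norm bounds $C_S$ and $C_R$ (these are the two middle-to-last displayed inequalities once one also uses $\|x\|_{X_\theta}\le C_S\|Rx\|_{Y_\theta}$ on the range of $P$). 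Consequently $P = SR$ is a bounded projection on $X_\theta$.

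Next I would identify $R$ as the claimed isomorphism. On $P(X_\theta) = SR(X_\theta)$, every element has the form $x = Sy$ for some $y\in Y_\theta$ (indeed $x = SRx' = S(Rx')$), and $Rx = RSy = y$, so $R$ maps $P(X_\theta)$ onto $Y_\theta$; conversely $S$ maps $Y_\theta$ into $P(X_\theta)$ because $PSy = SRSy = Sy$, and $R(Sy) = y$, $S(Rx) = SRx = Px = x$ for $x \in P(X_\theta)$. Hence $R|_{P(X_\theta)}$ and $S$ are mutually inverse. The two-sided estimate for $\|y\|_{Y_\theta}$ then follows: the upper bound $\|y\|_{Y_\theta} = \|RSy\|_{Y_\theta} \le C_R\|Sy\|_{X_\theta}$ is immediate from boundedness of $R$ on $X_\theta$, and the lower bound $\|Sy\|_{X_\theta} \le C_S\|y\|_{Y_\theta}$ is boundedness of $S$ on $Y_\theta$; rearranging gives $C_S^{-1}\|Sy\|_{X_\theta}\le \|y\|_{Y_\theta}$.

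The only genuinely non-formal point — and the step I would flag as the main obstacle, though it is mild — is the justification that complex interpolation of the \emph{compatible} families $(S,S)$ and $(R,R)$ produces bounded operators on $X_\theta$ and $Y_\theta$ with operator norms bounded by the \emph{maxima} $C_S = \max_{j}\|S\|_{\calL(Y_j,X_j)}$ and $C_R = \max_j\|R\|_{\calL(X_j,Y_j)}$ rather than by some interpolated (geometric-mean) quantity. This is exactly the interpolation property of the complex method \cite[Section~1.9.3]{Tr1}: if $T\in\calL(A_0,B_0)\cap\calL(A_1,B_1)$ then $T\in\calL([A_0,A_1]_\theta,[B_0,B_1]_\theta)$ with $\|T\|\le \|T\|_{\calL(A_0,B_0)}^{1-\theta}\|T\|_{\calL(A_1,B_1)}^{\theta}\le\max_{j}\|T\|_{\calL(A_j,B_j)}$. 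Apart from invoking this, everything else is the purely algebraic projection bookkeeping above, so no further estimates are needed. I would write the proof in about one short paragraph using exactly this structure, citing \cite[Theorem~1.2.4]{Tr1} for the overall scheme and \cite[Section~1.9.3]{Tr1} for the interpolation of operator norms.
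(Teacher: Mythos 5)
Your proposal is correct and follows essentially the same route as the paper's own proof: interpolate $S$ and $R$ to obtain the bounds $C_S$ and $C_R$, verify the projection identity $(SR)^2 = SR$, and read off the displayed estimates from $\|y\|_{Y_\theta} = \|RSy\|_{Y_\theta}$ and $\|x\|_{X_\theta} = \|S(Rx)\|_{X_\theta}$ on $SR(X_\theta)$. The only difference is cosmetic---you spell out the mutual-inverse algebra on $SR(X_\theta)$ a bit more explicitly than the paper does.
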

\begin{proof}
By complex interpolation we know
\[\|S\|_{\calL(Y_\theta,X_\theta)}\leq C_S, \ \ \text{and}  \ \   \|R\|_{\calL(X_\theta,Y_\theta)}\leq C_R\]
and $RS$ is the identity operator on $Y_{\theta}$. This proves the upper estimates for $S$ and $R$. To see that $SR$ is a projection note that $(SR)(SR) = SR$. The lower estimate for $S$ follows from
\begin{align*}
\|y\|_{Y_{\theta}} & = \|R Sy\|_{Y_{\theta}} \leq C_{R} \|Sy\|_{X_{\theta}}, \ \ \ y\in Y_{\theta}.
\end{align*}
To prove the lower estimate for $R$ note that for $x := SR u\in SR(X_{\theta})$
\[\|x\|_{X_{\theta}} =  \|S R SR u\|_{X_{\theta}}  \leq C_S \|R SR u\|_{Y_{\theta}} = C_S \|R x\|_{X_{\theta}}.\]
\end{proof}

\begin{lemma}\label{lem:abstractinterpdomain}
Let $\mathbb{F}^0,\mathbb{F}^1 \hookrightarrow \Distr'(\R^d;X)$ be two Banach spaces. For $\theta\in (0,1)$, let
\[\mathbb{F}^\theta = [\mathbb{F}^0, \mathbb{F}^1]_{\theta}.\]
Let $\Omega\subseteq \R^d$ be an open set, and define $\mathbb{F}^\theta(\Omega)$ as in Definition \ref{def:factor}, and assume there is an extension operator $\mathcal{E}$ for $\mathbb{F}^s(\Omega)$ for $s\in \{0,1\}$. Then
$[\mathbb{F}^0(\Omega), \mathbb{F}^1(\Omega)]_{\theta} = \mathbb{F}^\theta(\Omega)$ and
\[ C^{-1} \|f\|_{\mathbb{F}^\theta(\Omega)} \leq\|f\|_{[\mathbb{F}^0(\Omega), \mathbb{F}^1(\Omega)]_{\theta}} \leq \|f\|_{\mathbb{F}^\theta(\Omega)}\]
where $C$ only depends on the norms of the extension operator. Moreover, $\mathcal{E}$ is an extension operator for $\mathbb{F}^\theta(\Omega)$.
\end{lemma}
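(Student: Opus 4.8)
The plan is to apply the abstract co-retract principle of Lemma~\ref{lem:coretract} with the restriction map and the extension operator playing the roles of $R$ and $S$. Concretely, set $X_j = \mathbb{F}^j(\R^d;X)$ (so $X_\theta = \mathbb{F}^\theta$ by definition) and $Y_j = \mathbb{F}^j(\Omega)$ (so $Y_\theta = [\mathbb{F}^0(\Omega),\mathbb{F}^1(\Omega)]_\theta$, the space we want to identify). Let $S = \mathcal{E}$ be the common extension operator, which by hypothesis maps $\mathbb{F}^s(\Omega)\to \mathbb{F}^s$ boundedly for $s\in\{0,1\}$, and let $R$ be the restriction map $g\mapsto g|_\Omega$ from $\mathbb{F}^0+\mathbb{F}^1$ to $\mathbb{F}^0(\Omega)+\mathbb{F}^1(\Omega)$. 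Then $R\in\calL(\mathbb{F}^j,\mathbb{F}^j(\Omega))$ with norm $\le 1$ by the very definition of the factor-space norm, and $RS$ is the identity on $\mathbb{F}^j(\Omega)$ by property (1) of an extension operator.

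With these identifications, Lemma~\ref{lem:coretract} immediately gives that $SR$ is a bounded projection on $\mathbb{F}^\theta$, that $R$ restricts to an isomorphism from $SR(\mathbb{F}^\theta)$ onto $[\mathbb{F}^0(\Omega),\mathbb{F}^1(\Omega)]_\theta$ with inverse $S$, and the two-sided norm estimate
\[
C_{\mathcal{E}}^{-1}\|\mathcal{E}f\|_{\mathbb{F}^\theta} \le \|f\|_{[\mathbb{F}^0(\Omega),\mathbb{F}^1(\Omega)]_\theta} \le C_{\mathcal{E}}\|\mathcal{E}f\|_{\mathbb{F}^\theta},
\]
where $C_{\mathcal{E}} = \max_{j\in\{0,1\}}\|\mathcal{E}\|_{\calL(\mathbb{F}^j(\Omega),\mathbb{F}^j)}$. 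It remains to translate this into the stated inequality involving $\|f\|_{\mathbb{F}^\theta(\Omega)}$ rather than $\|\mathcal{E}f\|_{\mathbb{F}^\theta}$. For the lower bound: since $\mathcal{E}f$ is one particular extension of $f$, we have $\|f\|_{\mathbb{F}^\theta(\Omega)} \le \|\mathcal{E}f\|_{\mathbb{F}^\theta} \le C_{\mathcal{E}}\|f\|_{[\mathbb{F}^0(\Omega),\mathbb{F}^1(\Omega)]_\theta}$, which is the left inequality (with $C = C_{\mathcal{E}}$, or its square if one is careless about constants — I would keep track and absorb factors into $C$). For the upper bound one argues that $R\colon \mathbb{F}^\theta \to \mathbb{F}^\theta(\Omega)$ is bounded with norm $\le 1$: this is the interpolated version of the norm-$\le 1$ bounds $R\colon \mathbb{F}^j\to\mathbb{F}^j(\Omega)$, obtained from the exact interpolation property of the complex method. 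Given any extension $g\in\mathbb{F}^\theta$ of $f$ we then get $\|f\|_{[\mathbb{F}^0(\Omega),\mathbb{F}^1(\Omega)]_\theta} = \|Rg\|_{[\mathbb{F}^0(\Omega),\mathbb{F}^1(\Omega)]_\theta} \le \|g\|_{\mathbb{F}^\theta}$; taking the infimum over all such $g$ yields $\|f\|_{[\mathbb{F}^0(\Omega),\mathbb{F}^1(\Omega)]_\theta}\le\|f\|_{\mathbb{F}^\theta(\Omega)}$, the right inequality. Finally, the claim that $\mathcal{E}$ is an extension operator for $\mathbb{F}^\theta(\Omega)$ is exactly the content of $S\in\calL(Y_\theta,X_\theta)$ together with $RS = \mathrm{id}$, both already furnished by Lemma~\ref{lem:coretract}.

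I do not expect a serious obstacle here; the only point requiring a little care is the bookkeeping of the embeddings $\mathbb{F}^j\hookrightarrow\Distr'(\R^d;X)$ so that the restriction map $R$ and the complex interpolation functor are applied to genuine interpolation couples (one should note that $\mathbb{F}^0\cap\mathbb{F}^1$ and $\mathbb{F}^0(\Omega)\cap\mathbb{F}^1(\Omega)$ are dense in the respective interpolation spaces, or simply invoke the abstract framework of \cite[Section~1.9.3]{Tr1} which does not need density for the retract argument). The mild subtlety is that the constant $C$ in the statement should be recorded as depending only on $C_{\mathcal{E}}$; tracing through the two inequalities above shows $C = C_{\mathcal{E}}$ suffices for each, so one may take $C = C_{\mathcal{E}} = \max_{j\in\{0,1\}}\|\mathcal{E}\|_{\calL(\mathbb{F}^j(\Omega),\mathbb{F}^j)}$.
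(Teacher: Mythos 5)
Your proposal is correct and follows essentially the same route as the paper: define $R$ as restriction with $\|R\|\le 1$, $S=\mathcal{E}$ with norm $\le C_{\mathcal E}$, verify $RS=\mathrm{id}$, and then invoke Lemma~\ref{lem:coretract} for the lower bound and direct interpolation of $R$ plus an infimum over extensions for the upper bound, with the final assertion about $\mathcal{E}$ extracted from the retract structure. No gap; the bookkeeping of constants and the translation between $\|\mathcal{E}f\|_{\mathbb{F}^\theta}$ and $\|f\|_{\mathbb{F}^\theta(\Omega)}$ is handled the same way the paper does it.
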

\begin{proof}
Define $R: \mathbb{F}^j\to \mathbb{F}^j(\Omega)$ by $R f = f|_{\Omega}$ and $S:\mathbb{F}^j(\Omega)\to \mathbb{F}^j$ as $S = \mathcal{E}$. Then $\|R\|\leq 1$,$\|S\|\leq C$ and $RS = I$. From Lemma \ref{lem:coretract} we conclude that for all $f\in [\mathbb{F}^0(\Omega), \mathbb{F}^1(\Omega)]_{\theta}$
\[C^{-1}\|f\|_{\mathbb{F}^\theta(\Omega)} \leq C^{-1}\|\mathcal{E} f\|_{\mathbb{F}^\theta}\leq \|f\|_{[\mathbb{F}^0(\Omega), \mathbb{F}^1(\Omega)]_{\theta}}.\]

Conversely, let $f\in \mathbb{F}^{\theta}(\Omega)$. Choose, $g\in \mathbb{F}^{\theta}$ such that $R g = g|_{\Omega} = f$. Since $\|R\|\leq 1$, by complex interpolation we find
\[\|f\|_{[\mathbb{F}^0(\Omega), \mathbb{F}^1(\Omega)]_{\theta}}\leq \|g\|_{[\mathbb{F}^0, \mathbb{F}^1]_{\theta}} = \|g\|_{\mathbb{F}^{\theta}}\]
Taking the infimum over all $g$ as above, the result follows.

To show the final assertion, note that $\mathcal{E}\in \calL(\mathbb{F}^{\theta}(\Omega), \mathbb{F}^{\theta})$ by the above. Moreover, for $f\in \mathbb{F}^0(\Omega)\cap \mathbb{F}^1(\Omega)$,   $(\mathcal{E}f)|_{\Omega} = f$. By density (see \cite[Theorem 1.9.3]{Tr1}) this extends to all $f\in \mathbb{F}^{\theta}(\Omega)$.
\end{proof}

\begin{proposition}\label{prop:W=H2}
Let $X$ be a UMD space, $p\in (1, \infty)$, $k\in \N_0$ and assume $w\in A_p$ is such that $w(x_1, \tilde{x}) = w(-x_1, \tilde{x})$ for $x_1\in \R$ and $\tilde{x}\in \R^{d-1}$. Then $H^{k,p}(\R^d_+,w;X) = W^{k,p}(\R^d_+,w;X)$
\end{proposition}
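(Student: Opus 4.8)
The plan is to deduce Proposition~\ref{prop:W=H2} from its full-space counterpart, Proposition~\ref{prop:W=H}, by transferring along the extension and restriction operators of Lemma~\ref{lem:extension}, exactly as in the abstract factor-space formalism of Definition~\ref{def:factor}. First I would recall that, by the discussion following Lemma~\ref{lem:extension}, the weighted Sobolev space $W^{k,p}(\R^d_+,w;X)$ is (up to an equivalent norm) the factor space $[W^{k,p}(\R^d,w;X)](\R^d_+)$, with $\mathcal{E}_+^k$ serving as an extension operator; and by definition $H^{k,p}(\R^d_+,w;X)=[H^{k,p}(\R^d,w;X)](\R^d_+)$. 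Since $w$ is even in the first variable, Lemma~\ref{lem:extension} applies, and the hypothesis $w\in A_p$ guarantees the Sobolev spaces and Bessel potential spaces are well-defined.

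Next I would invoke Proposition~\ref{prop:W=H}, which gives $H^{k,p}(\R^d,w;X)=W^{k,p}(\R^d,w;X)$ with equivalent norms depending only on $d,X,p,k,[w]_{A_p}$. This identity of spaces on $\R^d$ immediately passes to the factor spaces: for any $f\in\Distr'(\R^d;X)$ that is a restriction to $\R^d_+$ of some $g$ in one space, the same $g$ lies in the other, so $[H^{k,p}(\R^d,w;X)](\R^d_+)=[W^{k,p}(\R^d,w;X)](\R^d_+)$ with equivalent factor-space norms (the equivalence constants are inherited from those in Proposition~\ref{prop:W=H}). Chaining these identifications:
\[
W^{k,p}(\R^d_+,w;X)=[W^{k,p}(\R^d,w;X)](\R^d_+)=[H^{k,p}(\R^d,w;X)](\R^d_+)=H^{k,p}(\R^d_+,w;X),
\]
all with norm equivalences depending only on $d,X,p,k,[w]_{A_p}$ and the norm of the extension operator $\mathcal{E}_+^k$.

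The only point requiring care — and the mild obstacle here — is the first equality, i.e. that the intrinsically defined $W^{k,p}(\R^d_+,w;X)$ really coincides with the factor space $[W^{k,p}(\R^d,w;X)](\R^d_+)$. One inclusion (restriction decreases the norm) is trivial; the other is precisely what the extension operator $\mathcal{E}_+^k$ of Lemma~\ref{lem:extension} provides, since it maps $W^{k,p}(\R^d_+,w;X)$ boundedly into $W^{k,p}(\R^d,w;X)$ and restricts back to the identity. This is already spelled out in the paragraph after Lemma~\ref{lem:extension}, so in the write-up I would simply cite that discussion rather than repeat it. With that in hand the proposition follows in two lines, and the UMD hypothesis enters only through its use in Proposition~\ref{prop:W=H}.
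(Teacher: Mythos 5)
Your argument is exactly the paper's: the paper's proof states that the result is immediate from Proposition~\ref{prop:W=H} together with the identification of $W^{k,p}(\R^d_+,w;X)$ with the factor space $[W^{k,p}(\R^d,w;X)](\R^d_+)$, which is precisely the chain of identifications you give. Correct and same approach.
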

\begin{proof}
This is immediate from Proposition \ref{prop:W=H} and the fact that $W^{k,p}(\R^d_+,w;X)$ coincides with the factor space $[W^{k,p}(\R^d,w;X)](\R^{d}_{+})$.
\end{proof}

Next we identify the complex interpolation spaces of $H^{s,p}(\Omega,w;X)$. Here the UMD property is needed to obtain bounded imaginary powers of $-\Delta$.
\begin{proposition}\label{prop:extensionH}
Let $X$ be a UMD space and $p\in (1, \infty)$.
Let $w\in A_p$ be such that $w(-x_1, \tilde{x}) = w(x_1, \tilde{x})$ for all $x_1\in \R$ and $\tilde{x}\in \R^{d-1}$.
\begin{enumerate}[$(1)$]
\item\label{it:extensionH1} Let $\theta\in [0,1]$ and $s_0, s_1,s\in \R$ be such that $s = s_0 (1-\theta) + s_1 \theta$. Then for $\Omega = \R^d$ or $\Omega = \R^d_+$ one has
\[[H^{s_0,p}(\Omega,w;X), H^{s_1,p}(\Omega,w;X)]_{\theta} = H^{s,p}(\Omega,w;X)\]
\item\label{it:extensionH2} For each $m\in \N_0$ there exists an $\mathcal{E}_+^m\in \calL(H^{-m,p}(\R^d_+,w;X), H^{-m,p}(\R^d,w;X))$ such that
\begin{itemize}
\item for all $|s|\leq m$, $\mathcal{E}_+\in \calL(H^{s,p}(\R_+^d,w;X), H^{s,p}(\R^d,w;X))$,
\item for all $|s|\leq m$, $f\mapsto (\mathcal{E}_+ f)|_{\R^d_+}$ equals the identity operator on $H^{s,p}(\R_+,w;X)$.
    \end{itemize}
Moreover, if $f\in L^p(\R^d_+,w;X)\cap C^{m}(\overline{\R^d_+};X)$, then $\mathcal{E}_+^m f\in C^{m}(\R^d;X)$.
\end{enumerate}
\end{proposition}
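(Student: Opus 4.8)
The plan is to prove the two parts in the order: (1) for $\Omega=\R^{d}$, then (2), then (1) for $\Omega=\R^{d}_{+}$, since the half-space interpolation identity will be read off from the full-space one together with the extension operator produced in (2). For (1) on $\R^{d}$, I would first observe that $\mathcal{J}_{\tau}=(1-\Delta)^{\tau/2}$ is an isometric isomorphism $H^{\sigma,p}(\R^{d},w;X)\to H^{\sigma-\tau,p}(\R^{d},w;X)$ for all $\sigma,\tau\in\R$ (immediate from $\mathcal{J}_{a}\mathcal{J}_{b}=\mathcal{J}_{a+b}$ and the definition of the norms), and that complex interpolation commutes with isomorphisms; applying $\mathcal{J}_{\min\{s_{0},s_{1}\}}$ and, if needed, the symmetry $[\,\cdot\,,\,\cdot\,]_{\theta}=[\,\cdot\,,\,\cdot\,]_{1-\theta}$, the claim reduces to $[L^{p}(\R^{d},w;X),H^{\sigma,p}(\R^{d},w;X)]_{\theta}=H^{\theta\sigma,p}(\R^{d},w;X)$ with $\sigma>0$. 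Now $A:=1-\Delta$, realized on $L^{p}(\R^{d},w;X)$, is invertible and sectorial with $D(A^{\tau/2})=H^{\tau,p}(\R^{d},w;X)$ up to equivalent norms for $\tau\geq 0$, and it has bounded imaginary powers: the multiplier symbol $\xi\mapsto(1+|\xi|^{2})^{it}$ of $A^{it}$ satisfies the hypotheses of Proposition~\ref{prop:Mihlin} with constant $\lesssim_{d}(1+|t|)^{d+2}$, whence $\|A^{it}\|\lesssim_{d,p,X,[w]_{A_{p}}}(1+|t|)^{d+2}$. The classical description of the complex interpolation spaces of the fractional domains of an operator with bounded imaginary powers (\cite[Theorem~1.15.3]{Tr1}) then gives $[L^{p}(\R^{d},w;X),D(A^{\sigma/2})]_{\theta}=D(A^{\theta\sigma/2})$, which is the claim.

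For (2) I would fix an integer $N=N(m)\geq m$ large enough and take $\mathcal{E}:=\mathcal{E}_{+}^{N}$ from Lemma~\ref{lem:extension}: it is bounded $W^{k,p}(\R^{d}_{+},w;X)\to W^{k,p}(\R^{d},w;X)$ for $0\leq k\leq N$, satisfies $(\mathcal{E}f)|_{\R^{d}_{+}}=f$, and maps $L^{p}(\R^{d}_{+},w;X)\cap C^{N}(\overline{\R^{d}_{+}};X)$ into $C^{N}(\R^{d};X)$. By Propositions~\ref{prop:W=H} and \ref{prop:W=H2}, $\mathcal{E}$ is bounded $H^{k,p}(\R^{d}_{+},w;X)\to H^{k,p}(\R^{d},w;X)$ for the integers $0\leq k\leq N$; in particular it is an extension operator for the factor spaces $H^{0,p}(\R^{d}_{+},w;X)$ and $H^{N,p}(\R^{d}_{+},w;X)$, so Lemma~\ref{lem:abstractinterpdomain} together with part (1) on $\R^{d}$ identifies $[H^{0,p}(\R^{d}_{+},w;X),H^{N,p}(\R^{d}_{+},w;X)]_{\theta}=H^{N\theta,p}(\R^{d}_{+},w;X)$ and shows that $\mathcal{E}$ is an extension operator for the latter. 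Hence $\mathcal{E}:H^{s,p}(\R^{d}_{+},w;X)\to H^{s,p}(\R^{d},w;X)$ is a bounded extension operator for every $0\leq s\leq N$, in particular for $0\leq s\leq m$.

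The negative orders are the crux, and I would handle them by duality. Running the construction with $(p',w',X^{*})$ in place of $(p,w,X)$ — legitimate since $X^{*}$ is UMD, $w'\in A_{p'}$, and $w'$ is even in $x_{1}$ — shows $\mathcal{E}$ is a bounded extension operator on $H^{\sigma,p'}(\R^{d}_{+},w';X^{*})$ for $0\leq\sigma\leq N$. Using the duality $[H^{\sigma,p'}(\R^{d},w';X^{*})]^{*}=H^{-\sigma,p}(\R^{d},w;X)$ ($X$ reflexive; \cite[Proposition~3.5]{MeyVerpoint}) and the identification of the dual of the half-space factor space $H^{\sigma,p'}(\R^{d}_{+},w';X^{*})$ with the subspace of $H^{-\sigma,p}(\R^{d},w;X)$ of distributions supported in $\overline{\R^{d}_{+}}$, the adjoint of $\mathcal{E}$ — provided the reflection coefficients defining $\mathcal{E}$ are chosen as in \cite{Se}, so that the adjoint is again an extension operator of the same type with the requisite moment conditions — yields a bounded operator $H^{-\sigma,p}(\R^{d}_{+},w;X)\to H^{-\sigma,p}(\R^{d},w;X)$ for $0\leq\sigma\leq N$ that still coincides with the pointwise Seeley reflection, hence with $\mathcal{E}$, on the dense subspace $L^{p}(\R^{d}_{+},w;X)\cap C^{N}(\overline{\R^{d}_{+}};X)$. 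This single operator, restricted to the scale $|s|\leq m$ (using $H^{s,p}(\R^{d}_{+},w;X)\hookrightarrow H^{-m,p}(\R^{d}_{+},w;X)$ for $|s|\leq m$), is the required $\mathcal{E}_{+}^{m}$: boundedness on each $H^{s,p}(\R^{d}_{+},w;X)$ with $|s|\leq m$ has just been shown, the identity $(\mathcal{E}_{+}^{m}f)|_{\R^{d}_{+}}=f$ follows by density from smooth $f$, and the $C^{m}$-regularity is the last assertion of Lemma~\ref{lem:extension}. Finally, part (1) for $\Omega=\R^{d}_{+}$ follows by applying Lemma~\ref{lem:abstractinterpdomain} with $\mathbb{F}^{i}=H^{s_{i},p}(\R^{d},w;X)$, $\Omega=\R^{d}_{+}$, and extension operator $\mathcal{E}_{+}^{m}$ for $m\geq\max\{|s_{0}|,|s_{1}|\}$, using part (1) on $\R^{d}$ to evaluate $[H^{s_{0},p}(\R^{d},w;X),H^{s_{1},p}(\R^{d},w;X)]_{\theta}=H^{s,p}(\R^{d},w;X)$.

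The only genuinely delicate step is the negative-order bound in (2). In the non-negative range the identity part of $\mathcal{E}$ and its reflection part can be treated separately, but for $s<0$ one cannot split off $\one_{\R^{d}_{+}}g$, since pointwise multiplication by $\one_{\R^{d}_{+}}$ is unbounded on $H^{-m,p}$ once $m$ exceeds the critical exponent (cf.\ Theorem~\ref{thm:pointwise_multiplier}); one must exploit the cancellation built into Seeley's reflection, equivalently push everything through the duality above together with a careful description of the duals of the half-space factor spaces. Everything else is bookkeeping resting on Propositions~\ref{prop:Mihlin}, \ref{prop:W=H}, \ref{prop:W=H2}, Lemmas~\ref{lem:extension} and \ref{lem:abstractinterpdomain}, and the abstract interpolation-of-domains theorem.
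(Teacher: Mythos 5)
Your argument for part (1) on $\R^d$ via $\mathcal{J}_\tau$-reduction, bounded imaginary powers of $1-\Delta$, and \cite[Theorem 1.15.3]{Tr1} is fine and is in the spirit of the result the paper cites from \cite{MeyVerpoint}. The reduction of (1) on $\R^d_+$ to (2) via Lemma~\ref{lem:abstractinterpdomain} is also exactly what the paper does. The problem is the negative-order step in (2): there is a genuine gap in the duality bookkeeping.

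You propose to take the extension operator $\mathcal{E}:H^{\sigma,p'}(\R^d_+,w';X^*)\to H^{\sigma,p'}(\R^d,w';X^*)$ and dualize. But the dual of the \emph{factor} space $H^{\sigma,p'}(\R^d_+,w';X^*)$ is the \emph{subspace} $H^{-\sigma,p}_{\R^d_+}(\R^d,w;X)$ of distributions supported in $\overline{\R^d_+}$ (you note this yourself). Hence the adjoint $\mathcal{E}^*$ maps $H^{-\sigma,p}(\R^d,w;X)$ into $H^{-\sigma,p}_{\R^d_+}(\R^d,w;X)$ — the domain and codomain come out in the wrong order and of the wrong type; it is not a map $H^{-\sigma,p}(\R^d_+)\to H^{-\sigma,p}(\R^d)$. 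Your claim that ``the adjoint is again an extension operator of the same type'' is also false on the level of formulas: writing the Seeley reflection as $\mathcal{E}f = \one_{\R^d_+}f+\one_{\R^d_-}\sum_j b_j T_{\lambda_j}f$, a change of variables gives $\mathcal{E}^*g = \one_{\R^d_+}\bigl(g+\sum_j b_j\lambda_j^{-1}T_{\lambda_j^{-1}}g\bigr)$, which for $x_1>0$ samples $g$ at $-\lambda_j^{-1}x_1<0$ and therefore depends on $g|_{\R^d_-}$. It is a projection-type operator onto $H_{\R^d_+}$, not an extension of data from the half-line, and it cannot ``coincide with $\mathcal{E}$ on smooth functions'' — the two act on different sides of the duality.

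What makes the duality argument go through, and what the paper does, is to refuse to work with an operator from the factor space at all until the very last step. One defines a single operator $\widetilde{\mathcal{E}}$ acting on distributions on all of $\R^d$ — the Seeley reflection $\widetilde{\mathcal{E}}f=\one_{\R^d_+}f+\one_{\R^d_-}\sum_j b_jT_{\lambda_j}f$ — together with the concrete candidate adjoint $\widetilde{E}^m_+g = \one_{\R^d_+}\bigl(g+\sum_j b_j\lambda_j^{-1}T_{\lambda_j^{-1}}g\bigr)$. Both are bounded on $W^{m,p}(\R^d,w;X)$ resp.\ $W^{m,p'}(\R^d,w';X^*)$ by the moment conditions, hence (via $\R^d$-interpolation and $W=H$) on $H^{s,\,\cdot\,}(\R^d,\cdot)$ for $0\le s\le m$. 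The pointwise duality relation $\langle\widetilde{\mathcal{E}}f,g\rangle=\langle f,\widetilde{E}^m_+g\rangle$ then extends $\widetilde{\mathcal{E}}$ to a bounded operator on $H^{s,p}(\R^d,w;X)$ for every $s\in[-m,0]$. Only now does one pass to the factor space: $\widetilde{\mathcal{E}}$ satisfies $\widetilde{\mathcal{E}}f|_{\R^d_+}=f|_{\R^d_+}$ and, crucially, $\widetilde{\mathcal{E}}f=0$ whenever $f|_{\R^d_+}=0$ (proved by mollifying from the left), so $\mathcal{E}^m_+f:=\widetilde{\mathcal{E}}\tilde f$ is well defined and bounded from $H^{s,p}(\R^d_+,w;X)$ to $H^{s,p}(\R^d,w;X)$ for all $|s|\le m$. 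This vanishing property is the device that replaces the broken adjoint step in your sketch, and it is not a formality: it is what turns a bounded operator on $H^{-m,p}(\R^d,w;X)$ into an extension operator on the factor space. Your proposal omits this and cannot be completed as written.
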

By a reflection argument the same holds for $\R^d_-$. The corresponding operator will be denoted by $\mathcal{E}_-^m$.
\begin{proof}
(1): For $\Omega= \R^d$, the result follows from \cite[Proposition 3.2 and 3.7]{MeyVerpoint} (see \cite[Theorem 5.6.9]{HNVW1} for the unweighed case).

(2): Fix $m\in \N$. We first construct $\wtE\in \calL(H^{-m,p}(\R^d,w;X))$ such that
\begin{enumerate}[(i)]
\item $\wtE\in \calL(H^{s,p}(\R^d,w;X))$ for all $|s|\leq m$;
\item $\wtE f|_{\R_+^d} = f|_{\R^d_+}$;
\item $\wtE f = 0$ if $f|_{\R_+^d} = 0$;
\end{enumerate}
Given $\wtE$ we can define $\mathcal{E}^m_+:H^{s,p}(\R_+^d,w;X)\to H^{s,p}(\R^d,w;X)$
by $\mathcal{E}^m_+ f = \wtE \wt{f}$ where $\wt{f}\in H^{s,p}(\R^d,w;X)$ satisfies $\wt{f}|_{\R^d_+} = f$. This is well-defined by (iii).

In order to construct $\wtE$ let $0<\lambda_1<\ldots < \lambda_{2m+2}<\infty$ and $b_1, \ldots, b_{2m+2}\in \R$ be as in \cite[2.9.3]{Tr1}. For $\lambda\in \R\setminus\{0\}$ we write $T_{\lambda} f(x) = f(-\lambda x_1, \tilde{x})$. Let $\wtE\in \calL(L^p(\R^d,w;X))$ and $\wtEr\in \calL(L^{p'}(\R^d,w';X^*))$ be defined by \begin{align*}
\wtE f &= \one_{\R^d_+} f + \one_{\R^d_-} \sum_{j=1}^{2m+2} b_j T_{\lambda_j}f, \qquad  \wtEr g
= \one_{\R^d_+} \Big(g + \sum_{j=1}^{2m+2} b_j \lambda_j^{-1} T_{\lambda_j^{-1}}g\Big).
\end{align*}
Then one can check that
\begin{equation}\label{eq:wtEduality}
\lb \wtE f,g\rb = \lb  f,\wtEr g\rb, \ \ \ f\in L^p(\R^d,w;X),  \ \ g\in L^{p'}(\R^d,w';X^*).
\end{equation}
Moreover, by the special choice of $b_1, \ldots, b_{2m+2}$ it is standard to check that $\wtE\in \calL(W^{m,p}(\R^d,w;X))$ and $\wtEr\in \calL(W^{m,p'}(\R^d,w';X^*))$. In view of \eqref{it:extensionH1} for $\Omega = \R^d$ and Proposition \ref{prop:W=H}, complex interpolation gives $\wtE\in \calL(H^{s,p}(\R^d,w;X))$ and $\wtEr\in \calL(H^{s,p'}(\R^d,w';X^*))$ for all $0\leq s\leq m$.

Recall that $H^{s,p}(\R^d,w;X)= (H^{-s,p'}(\R^d,w';X^*))^*$ (see \cite[Proposition 3.5]{MeyVerpoint}), $X$ being reflexive as a UMD space (see \cite[Theorem 4.3.3]{HNVW1}). By the duality relation \eqref{eq:wtEduality} we find that $\wtE$ extends to a bounded linear operator on $H^{s,p}(\R^d,w;X)$ for each $s\in [-m,0]$. Therefore, (i) follows and moreover (ii) follows by a density argument.
To check (iii) let $f\in H^{-m,p}(\R^d,w;X)$ with $f|_{\R_+^d} = 0$ be given.  Let $\phi\in C^\infty_c(\R^d_-)$ be such that $\int \phi \ud x  =1$ and set $\phi_n:= n^{-d} \phi(n\cdot)$ for $n\in \N$. Then, by Lemma \ref{lem:convolution;Bessel_pot_space}, $\phi_n*f\to f$ in $H^{-m,p}(\R^d,w;X)$ and $\phi_n*f\in L^p(\R^d,w;X)$. Now since $\phi_n*f|_{\R^d_+} = 0$ it follows that $\wtE f|_{\R^d_+} = \limn \wtE \phi_n*f|_{\R^d_+} = 0$.

Finally, note that for $f\in L^p(\R^d_+,w;X)\cap C^{m}(\overline{\R^d_+};X)$, $\mathcal{E}^+_m f \in C^{m}(\overline{R}^{d}_{-};X) \oplus C^{m}(\overline{R}^{d}_{+};X)$ with
\[\mathcal{E}^+_m f|_{\R^d_+} =  f \ \ \ \text{and} \ \ \  \mathcal{E}^+_m f|_{\R^d_-} = \sum_{j=1}^{2m+2} b_j T_{\lambda_j}f\]
and by the special choice of $b_1, \ldots, b_{2m+2}$, one can check that $f\in C^{m}(\R^d;X)$.

Now (1) for $\Omega = \R^d_+$ follows from Lemma \ref{lem:abstractinterpdomain} and (2).
\end{proof}

For an open set $\Omega\subseteq \R^d$, and $s\in \R$ let $H^{s,p}_{\Omega}(\R^d,w_{\gamma};X)$ be the closed subspace of $H^{s,p}(\R^d,w_{\gamma};X)$ of functions with support in $\overline{\Omega}$.

\begin{proposition}\label{prop:interpH}
Let $X$ be a UMD space, $p\in (1, \infty)$, $k\in \N$, $w(-x_1, \tilde{x}) = w(x_1, \tilde{x})$ for all $x_1\in \R$ and $\tilde{x}\in \R^{d-1}$. Let $\theta\in [0,1]$ and $s_0, s_1,s\in \R$ be such that $s = s_0 (1-\theta) + s_1 \theta$.
Then the following identity holds with equivalence of norms
\[[H^{s_0,p}_{\R_{\pm}^d}(\R^d,w;X), H^{s_1,p}_{\R_{\pm}^d}(\R^d,w;X)]_{\theta} = H^{s,p}_{\R_{\pm}^d}(\R^d,w;X).\]
\end{proposition}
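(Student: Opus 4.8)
The plan is to deduce this from Proposition~\ref{prop:extensionH} together with the pointwise multiplier Theorem~\ref{thm:pointwise_multiplier}, via the co-retract technique of Lemma~\ref{lem:coretract}. The natural idea is to realize $H^{s,p}_{\R^d_+}(\R^d,w;X)$ as a complemented subspace of $H^{s,p}(\R^d_+,w;X)$, whose interpolation spaces we already understand by Proposition~\ref{prop:extensionH}\eqref{it:extensionH1}. First I would set up, for each fixed $m\in\N$ with $m\geq \max(|s_0|,|s_1|)$, the operator $\mathcal{E}_+^m$ from Proposition~\ref{prop:extensionH}\eqref{it:extensionH2} as the "extension" map $S$, and the restriction map $R:f\mapsto f|_{\R^d_+}$ as the other half of the pair. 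By construction $R\mathcal{E}_+^m = \mathrm{id}$ on each $H^{s_j,p}(\R^d_+,w;X)$, so $\mathcal{E}_+^m R$ is a projection. The point is to identify its range: a distribution $g$ on $\R^d$ is fixed by $\mathcal{E}_+^m R$ iff it is determined by its restriction to $\R^d_+$ in the rigid way dictated by the reflection formula. This is \emph{not} quite $H^{s,p}_{\R^d_+}(\R^d;X)$, so a direct application of Lemma~\ref{lem:coretract} identifies the wrong space, and that is the main obstacle to address.

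To get the space $H^{s,p}_{\R^d_+}(\R^d,w;X)$ itself, the right co-retract pair uses the characteristic function $\one_{\R^d_+}$. Define $S:H^{s,p}_{\R^d_+}(\R^d,w;X)\hookrightarrow H^{s,p}(\R^d,w;X)$ as the inclusion, and $R:H^{s,p}(\R^d,w;X)\to H^{s,p}_{\R^d_+}(\R^d,w;X)$ by $Rg=\one_{\R^d_+} g$. For the range restriction $|s|<\tfrac{\gamma+1}{p}$ (and the dual condition), Theorem~\ref{thm:pointwise_multiplier} shows $R$ is bounded on $H^{s_0,p}$ and $H^{s_1,p}$, and clearly $RS=\mathrm{id}$ on the subspace with support in $\overline{\R^d_+}$ since $\one_{\R^d_+}g=g$ there (up to the null set $\{x_1=0\}$, which is irrelevant because $w$ is a power weight so $H^{s,p}$ functions do not charge it — this needs a line of justification using $s<\tfrac{\gamma+1}{p}$). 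Then Lemma~\ref{lem:coretract} with $X_j = H^{s_j,p}(\R^d,w;X)$ and $Y_j = H^{s_j,p}_{\R^d_+}(\R^d,w;X)$ gives $[Y_0,Y_1]_\theta = H^{s,p}_{\R^d_+}(\R^d,w;X)$ once we know $[X_0,X_1]_\theta = H^{s,p}(\R^d,w;X)$, which is Proposition~\ref{prop:extensionH}\eqref{it:extensionH1} for $\Omega=\R^d$.

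So the actual steps are: (i) fix $m$ large enough and invoke Proposition~\ref{prop:extensionH}\eqref{it:extensionH1} to get $[H^{s_0,p}(\R^d,w;X), H^{s_1,p}(\R^d,w;X)]_\theta = H^{s,p}(\R^d,w;X)$ with uniform constants; (ii) check that $\one_{\R^d_+}$ acts boundedly on $H^{s_j,p}(\R^d,w;X)$ for $j=0,1$ — here one must verify that the hypotheses $-\tfrac{\gamma'+1}{p'}<s_j<\tfrac{\gamma+1}{p}$ hold, which should be part of the statement's assumptions (or a normalization reducing to it); (iii) observe $\one_{\R^d_+}g = g$ for $g\in H^{s_j,p}_{\R^d_+}(\R^d,w;X)$, using that the hyperplane $\{x_1=0\}$ is $w$-null and that the two sides agree as distributions — here one can approximate $g$ by mollified functions supported in $\overline{\R^d_+}$ using Lemma~\ref{lem:convolution;Bessel_pot_space}; (iv) apply Lemma~\ref{lem:coretract} with the pair $(S,R)$ above. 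The only genuinely delicate point is step (iii): one needs that multiplication by $\one_{\R^d_+}$ genuinely restricts to the identity on the support-constrained subspace, not merely up to something supported on the boundary. Since we are in the Bessel-potential (not distributional-order-zero) regime with $s_j<\tfrac{\gamma+1}{p}$, distributions in $H^{s_j,p}_{\R^d_+}$ carry no mass on $\{x_1=0\}$, and a mollification argument via Lemma~\ref{lem:convolution;Bessel_pot_space} makes this rigorous. The $\R^d_-$ case is identical by the reflection $x_1\mapsto -x_1$, using the corresponding bound for $\one_{\R^d_-} = 1 - \one_{\R^d_+}$.
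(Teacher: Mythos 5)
There is a genuine gap, and it is structural. Your proposed retraction $R g = \one_{\R^d_+} g$ relies on Theorem~\ref{thm:pointwise_multiplier}, which is available only for power weights $w_\gamma$ and, crucially, only in the restricted range $-\tfrac{\gamma'+1}{p'} < s < \tfrac{\gamma+1}{p}$. You acknowledge this and hope that these assumptions are ``part of the statement's assumptions (or a normalization reducing to it)'', but they are not: Proposition~\ref{prop:interpH} allows an \emph{arbitrary} symmetric $A_p$ weight $w$ (not necessarily a power weight) and \emph{arbitrary} $s_0, s_1 \in \R$. There is no normalization that reduces large $s$ to the multiplier range; indeed for $s > \tfrac{\gamma+1}{p}$ multiplication by $\one_{\R^d_+}$ is genuinely unbounded on $H^{s,p}(\R^d,w_\gamma;X)$, since elements there have well-defined nonzero traces on $\{x_1=0\}$. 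So your route proves only a strict special case.

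You correctly set up the co-retract machinery and correctly dismissed the naive pairing where $S=\mathcal{E}_+^m$ and $R$ is restriction to $\R^d_+$ (the image of $\mathcal{E}_+^m R$ is indeed the wrong space). What you missed is a third option, and it is the one the paper uses: take $S$ to be the inclusion $H^{t,p}_{\R^d_+}(\R^d,w;X)\hookrightarrow H^{t,p}(\R^d,w;X)$ and take
\[
R f := f - \mathcal{E}^m_-\bigl(f|_{\R^d_-}\bigr),
\]
where $\mathcal{E}^m_-$ is the reflected extension operator from Proposition~\ref{prop:extensionH}\eqref{it:extensionH2} with $m \ge \max\{|s_0|,|s_1|\}$. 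Since $\mathcal{E}^m_-(f|_{\R^d_-})$ agrees with $f$ on $\R^d_-$, the difference $Rf$ has support in $\overline{\R^d_+}$, so $R$ maps into the subspace; and if $f$ is already supported in $\overline{\R^d_+}$ then $f|_{\R^d_-} = 0$ and $Rf = f$, so $RS = \mathrm{id}$. This $R$ is bounded on $H^{t,p}(\R^d,w;X)$ for the full range $|t|\le m$ and for every symmetric $A_p$ weight, because it only uses the boundedness of $\mathcal{E}^m_-$ (which comes from Proposition~\ref{prop:extensionH}, not from the multiplier theorem). With this $(R,S)$ pair, Lemma~\ref{lem:coretract} together with Proposition~\ref{prop:extensionH}\eqref{it:extensionH1} on $\R^d$ gives the claim at once, with no smallness restriction on $s_0, s_1$ and no restriction to power weights. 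The conceptual point your argument misses is that the projection onto the support-constrained subspace need not be the sharp operator $\one_{\R^d_+}\cdot$; any bounded operator that coincides with the identity there and annihilates a complement suffices, and subtracting the extension of the trace on the other half-space achieves this robustly across all orders $s$.
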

\begin{proof}
To show this we consider the case of $\R_+^d$. The other case can be proved in the same way. Let $\mathcal{E}_{-}^m$ be the (reflected) extension operator of Proposition~\ref{prop:extensionH} with $m$ the least integer above $\max\{|s_0|, |s_1|\}$.
Define $R:H^{s_{0} \wedge s_{1},p}(\R^{d},w;X) \to H^{s_{0} \wedge s_{1},p}_{\R^{d}_{+}}(\R^{d},w;X)$ by
\[R f := f- \mathcal{E}^{m}_{-} (f|_{\R_-^d})\]
and let $S:H^{s_{0} \wedge s_{1},p}_{\R^d_+}(\R^d,w;X)\to H^{s_{0} \wedge s_{1},p}(\R^d,w;X)$ be the inclusion operator.
For each $t \in [s_{0} \wedge s_{1},m]$, $R$ and $S$ restrict to bounded linear operators $R:H^{t,p}(\R^{d},w;X) \to H^{t,p}_{\R^{d}_{+}}(\R^{d},w;X)$ and $S:H^{t,p}_{\R^d_+}(\R^d,w;X)\to H^{t,p}(\R^d,w;X)$ with the property that $SR(H^{t,p}(\R^d,w;X))=H^{t,p}_{\R^d_+}(\R^d,w;X)$.
Using Lemma~\ref{lem:coretract} in combination with Proposition~\ref{prop:extensionH} we find that $R$ restricts to an isomorphism from $H^{s,p}_{\R^d_+}(\R^d,w;X) = SR(H^{s,p}(\R^d,w;X))$ to $[H^{s_0,p}_{\R_{+}^d}(\R^d,w;X), H^{s_1,p}_{\R_{+}^d}(\R^d,w;X)]_{\theta}$.
Since $R f = f$ for all $f\in H^{s,p}_{\R^d_+}(\R^d,w;X)$, this proves the required identity for the interpolation space. The norm equivalence follows from the estimates in Lemma~\ref{lem:coretract} as well.
\end{proof}

To end this section we present a variation of a classical interpolation inequality.
The result can be deduced from the weighted Gagliardo-Nirenberg type inequality \cite[Proposition 5.1]{MeyVersharp}. We provide a more direct proof which also yields additional information. The unweighted and scalar-valued case can be found in \cite[Theorem 1.5.1]{Kry08}. However, the proof given there does not extend to the weighted setting. The lemma can also be deduced from Proposition \ref{prop:Mihlin}, but this would require $X$ to be a UMD space (cf.\ the proof of \cite[Corollary 5.3]{GV17}).

\begin{lemma}[Gagliardo-Nirenberg inequality]\label{lem:normequivalenceSobolevhighestorder}
Let $X$ be a Banach space and $k\in \N$. Let $\Omega = \R^d$ or $\Omega = \R_+^d$. Let $w\in A_p$ be such that $w(-x_1, \tilde{x}) = w(x_1, \tilde{x})$ if $\Omega = \R^d_+$. Then for all $u\in W^{k,p}(\Omega,w;X)$ and $j\in \{1, \ldots, k-1\}$,
\[[u]_{W^{j,p}(\Omega,w;X)} \lesssim_{p,k,[w]_{A_p}} \|u\|_{L^p(\Omega,w;X)}^{1-\frac{j}{k}}[u]_{W^{k,p}(\Omega,w;X)}^{\frac{j}{k}}.\]
\end{lemma}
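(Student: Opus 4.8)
The plan is to reduce the Gagliardo--Nirenberg inequality to the full-space case $\Omega = \R^d$ via the extension operator $\mathcal{E}_+^m$ of Lemma~\ref{lem:extension}, and then on $\R^d$ to use a localization-and-scaling argument directly in physical space, avoiding Fourier multipliers so that the UMD property is not required. First I would dispose of the half-space case: given $u \in W^{k,p}(\R_+^d,w;X)$, set $v = \mathcal{E}_+^k u \in W^{k,p}(\R^d,w;X)$; by Lemma~\ref{lem:extension} one has $[u]_{W^{j,p}(\R_+^d,w;X)} \leq [v]_{W^{j,p}(\R^d,w;X)}$ for $j \leq k$ (restriction only decreases the norm), while $\|v\|_{L^p(\R^d,w;X)} \lesssim \|u\|_{L^p(\R_+^d,w;X)}$ and $[v]_{W^{k,p}(\R^d,w;X)} \lesssim \|u\|_{W^{k,p}(\R_+^d,w;X)}$. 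So on $\R^d_+$ it suffices to prove a slightly weaker-looking statement; to recover the sharp form with only $[u]_{W^{k,p}}$ on the right one runs the standard trick of applying the full-space inequality to the rescaled function $u(\rho\,\cdot\,)$ and optimizing in $\rho>0$ — the weight $w_\gamma$ is homogeneous, so this scaling is clean, though for general $A_p$ weights one has to be slightly more careful and may instead absorb the lower-order term as in the argument below.

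For the full-space case, the core estimate is the one-dimensional-in-spirit inequality: for a single direction, control $\|\partial_i u\|_{L^p}$ by $\|u\|_{L^p}^{1/2}\|\partial_i^2 u\|_{L^p}^{1/2}$. The key step is a pointwise/averaged identity: for a smooth compactly supported $\phi$, write the first difference $\partial_i u(x)$ in terms of a second-order Taylor remainder, namely for $h > 0$
\[
\partial_i u(x) = \frac{u(x+he_i) - u(x-he_i)}{2h} - \frac{1}{2h}\int_{-h}^{h}(h-|t|)\,\partial_i^2 u(x+te_i)\,\ud t,
\]
which gives $\|\partial_i u(x)\|_X \leq \frac{1}{2h}(\|u(x+he_i)\|_X + \|u(x-he_i)\|_X) + \frac{h}{2}\,(\text{average of }\|\partial_i^2 u\|_X)$. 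Taking $L^p(\R^d,w;X)$-norms, the first term is bounded using that translations in one coordinate are bounded on $L^p(\R^d,w)$ with a constant uniform in the shift when $w$ is an $A_p$ weight even in that coordinate — this is the only place the evenness/$A_p$ hypothesis on $w$ enters in a genuine way, and it is the point I expect to need the most care, since translation is \emph{not} uniformly bounded on a general weighted $L^p$. One resolves this by instead using an averaged version: replace the point evaluations $u(x\pm he_i)$ by averages over $x_i \in (x_i, x_i + h)$, so that the maximal function controls everything and the $A_p$ bound on $M$ applies. This yields $[u]_{W^{1,p}(\R^d,w;X)} \lesssim h^{-1}\|u\|_{L^p(\R^d,w;X)} + h\,[u]_{W^{2,p}(\R^d,w;X)}$ for every $h > 0$; optimizing over $h$ gives the case $k=2$, $j=1$.

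Finally, the general case $1 \leq j \leq k-1$ follows by iterating the case just proved. Applying the $k=2$ inequality to the derivatives $\partial^\alpha u$ with $|\alpha| = l-1$ yields $[u]_{W^{l,p}} \lesssim [u]_{W^{l-1,p}}^{1/2}[u]_{W^{l+1,p}}^{1/2}$ for $1 \leq l \leq k-1$, and a routine induction (or the standard interpolation-of-sequences lemma, e.g.\ the discrete version of the argument in \cite[Theorem~1.5.1]{Kry08}) upgrades this to $[u]_{W^{j,p}} \lesssim \|u\|_{L^p}^{1-j/k}[u]_{W^{k,p}}^{j/k}$. The density of $C^\infty_c(\R^d)\otimes X$ in $W^{k,p}(\R^d,w;X)$ from Lemma~\ref{lem:densityCcW} justifies doing all the computations for smooth $u$ first and then passing to the limit. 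The main obstacle throughout is the lack of translation-invariance of weighted $L^p$; handling it via averaging and the Hardy--Littlewood maximal function (Lemma~\ref{lem:convolution}) is the technical heart of the proof, and is precisely why the evenness assumption on $w$ is imposed in the half-space case (to make the reflection in $\mathcal{E}_+^m$ respect the weight).
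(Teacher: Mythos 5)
Your proposal is in the right spirit and the iteration step at the end is standard, but there are two concrete gaps in the execution, both of which the paper resolves with tricks you either dismiss or don't invoke.

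First, the averaging step on $\R^d$. You replace the point evaluations $u(x\pm he_i)$ by one-dimensional averages in the $x_i$-coordinate and then claim the maximal function and the $A_p$ bound apply. But a one-directional average is controlled by the \emph{directional} maximal operator $M_i$, and $M_i$ is \emph{not} bounded on $L^p(\R^d,w)$ for a general $w\in A_p(\R^d)$; one needs $w(\cdot,\tilde x)\in A_p(\R)$ uniformly in $\tilde x$, which is a strictly stronger hypothesis. The fix is to average over $d$-dimensional neighbourhoods (e.g.\ mollify with $\phi_h=h^{-d}\phi(\cdot/h)$ and split $\partial_i u = \partial_i\phi_h * u + (\partial_i u - \phi_h * \partial_i u)$), so that the full Hardy--Littlewood maximal function enters. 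The paper sidesteps this entirely: it quotes $H^{2,p}(\R^d,w;X)\hookrightarrow W^{1,p}(\R^d,w;X)$ from Lemma~\ref{lem:compHW} (whose kernel $G_s$ is radially decreasing and hence controlled by the full $M$), getting the non-sharp bound $[u]_{W^{1,p}}\lesssim \|u\|_{L^p}+[u]_{W^{2,p}}$ without any translation arguments.

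Second, you dismiss the scaling trick for general $A_p$ weights and leave the absorption of lower-order terms vague. In fact the paper's scaling works for \emph{any} $A_p$ weight: one applies the non-sharp estimate to $u_\lambda = u(\lambda\cdot)$ together with the \emph{rescaled weight} $w_\lambda = w(\lambda\cdot)$, using that $[w_\lambda]_{A_p}=[w]_{A_p}$ is scale-invariant, so the implicit constant is uniform in $\lambda$; minimizing over $\lambda$ then gives the sharp constant. You do not need homogeneity of $w$ for this. Relatedly, your half-space reduction needs more care: with the extension operator alone you only get $[v]_{W^{k,p}(\R^d)}\lesssim\|u\|_{W^{k,p}(\R^d_+)}$, not $[u]_{W^{k,p}(\R^d_+)}$, and the scaling trick does not fix this because the extension is not scale-covariant. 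The paper instead uses a stronger structural property of the Adams--Fournier extension: $\partial^\alpha\mathcal{E}_+^2 = E_\alpha\partial^\alpha$ for $|\alpha|\le 2$, where $E_\alpha$ is itself a bounded extension operator. This gives $[\mathcal{E}_+^2 u]_{W^{2,p}(\R^d)}\lesssim[u]_{W^{2,p}(\R^d_+)}$ directly, so the half-space estimate follows by applying the full-space result to $\mathcal{E}_+^2 u$. This commutation property is the missing ingredient in your half-space step, and without it (or the scaling-with-weight trick on the already-extended $v$) the claimed bound would have the full $W^{k,p}$ norm on the right.
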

\begin{proof}
By an iteration argument one sees that it suffices to consider $j=1$ and $k=2$ (see \cite[Exercise 1.5.6]{Kry08}).

First consider the case $\Omega = \R^{d}$.  For $u\in W^{2,p}(\R^d,w;X)$, it follows from Lemma~\ref{lem:compHW} that
\begin{align*}
[u]_{W^{1,p}(\R^d,w;X)} & \leq \|u\|_{W^{1,p}(\R^d,w;X)}
\\ & \lesssim_{p,[w]_{A_p}} \|u\|_{H^{2,p}(\R^d,w;X)}\leq  \|u\|_{L^p(\R^d,w;X)} + [u]_{W^{2,p}(\R^d,w;X)}.
\end{align*}
For $\lambda > 0$ let $u_{\lambda}(x) = u(\lambda x)$ and $w_{\lambda} = w(\lambda x)$ and note that $[w]_{A_p} = [w_{\lambda}]_{A_p}$. Then applying the estimate to $u_{\lambda}$ and the weight $w_{\lambda}$, a substitution yields
\[[u]_{W^{1,p}(\R^d,w;X)} \lesssim_{p,n,[w]_{A_p}} \lambda^{-1}\|u\|_{L^p(\R^d,w;X)} + \lambda [u]_{W^{2,p}(\R^d,w;X)}.\]
Minimizing over $\lambda>0$ the result follows.

In the case $\Omega = \R_+^d$ we use a standard extension argument. Let $\mathcal{E}^2_+$ be the extension operator from Lemma \ref{lem:extension}. Then by \cite[Theorem 5.19]{AF03}, $\mathcal{E}_+^2$ has the following additional property: for all $|\alpha|\leq 2$, $\partial^{\alpha} \mathcal{E}^2_+ = E_{\alpha} \partial^{\alpha}$, where $E_\alpha$ is an extension operator for $W^{2-|\alpha|}(\R^d_+,w;X)$. Therefore, from the case $\Omega = \R^d$ applied to $\mathcal{E}^2_+ u$ and the boundedness of the extension operators we find that
\begin{align*}
[u]_{W^{1,p}(\R^d_+,w;X)} &\leq [\mathcal{E}^n_+ u]_{W^{1,p}(\R^d,w;X)} \lesssim_{p,d,[w]_{A_p}} \|\mathcal{E}^n_+ u\|_{L^p(\R^d,w;X)}^{1/2} [\mathcal{E}^n_+ u]_{W^{2,p}(\R^d,w;X)}^{1/2}
\end{align*}
Clearly, $\|\mathcal{E}^n_+ u\|_{L^p(\R^d,w;X)}\leq \|u\|_{L^p(\R^d_+,w;X)}$. Moreover, since $\partial^{\alpha}  \mathcal{E}^2_+ = E_0\partial^{\alpha}$,
\[[\mathcal{E}^n_+ u]_{W^{2,p}(\R^d,w;X)}
= \sum_{|\alpha|=2} \|E_0 \partial^{\alpha} u\|_{L^p(\R^d,w;X)}
\leq [u]_{W^{2,p}(\R^d_+,w;X)}. \]
Therefore, the result follows if we combine the two estimates.
\end{proof}

\section{Application to interpolation theory and the first derivative\label{sec:intbdr}}

For $p\in (1, \infty)$, $s\in \R$ and a weight $w\in A_p$, let $H^{s,p}_0(\R,w;X)$ denote the closure of $C^\infty_c(\R\setminus\{0\};X)$ in $H^{s,p}_0(\R,w;X)$.
In this section we characterize the complex interpolation space $[L^p(\R_+,w_{\gamma};X),H^{1,p}_0(\R_+,w_{\gamma};X)]_{\theta}$. Moreover, we use this to characterize the domains of fractional powers of the first derivative.

\subsection{Results on the whole real line}

For $k \in \N_{0}$ let
\[
W^{k+1,1}_{\mathrm{loc},0}(\R;X) := \{ f \in W^{k+1,1}_{\mathrm{loc}}(\R;X): f(0)=\ldots=f^{(k)}(0)=0 \}.
\]
Since $f(y) - f(x) = \int_{x}^{y} f'(t) \ud t$, it follows that $f$ has a version which is uniformly continuous on bounded intervals, and hence $f^{(j)}(0)$ for $j\in \{0,\ldots, k\}$ is defined in a pointwise sense

We will need the following simple lemma.
\begin{lemma}\label{lemma:indicator_pointwise_vanishing_trace}
Let $X$ be a Banach space and $k \in \N_{0}$. If $f \in W^{k+1,1}_{\mathrm{loc}}(\R;X)$ satisfies $f(0)=\ldots=f^{(k)}(0)=0$, then $1_{\R_{+}}f \in W^{k+1,1}_{\mathrm{loc}}(\R;X)$ with
\[
(1_{\R_{+}}f)^{(j)} = 1_{\R_{+}}f^{(j)}, \quad\quad j \in \{1,\ldots,k+1\}.
\]
\end{lemma}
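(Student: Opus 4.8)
The plan is to prove the statement by induction on $k$, with the base case $k=0$ being the heart of the matter. For $k=0$ we are given $f \in W^{1,1}_{\mathrm{loc}}(\R;X)$ with $f(0)=0$, and we must show $1_{\R_+}f \in W^{1,1}_{\mathrm{loc}}(\R;X)$ with distributional derivative $1_{\R_+}f'$. First I would fix a test function $\varphi \in \mathcal{D}(\R)$ and compute $\int_{\R} (1_{\R_+}f)(t)\varphi'(t)\ud t = \int_0^\infty f(t)\varphi'(t)\ud t$. Since $f$ is (after choosing the continuous version) absolutely continuous on bounded intervals, I can integrate by parts on each interval $[0,R]$ with $R$ large enough that $\supp\varphi \subseteq (-R,R)$: this gives $\int_0^R f(t)\varphi'(t)\ud t = [f(t)\varphi(t)]_0^R - \int_0^R f'(t)\varphi(t)\ud t = -f(0)\varphi(0) - \int_0^\infty f'(t)\varphi(t)\ud t$, and the boundary term vanishes because $f(0)=0$. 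Hence $\int_{\R}(1_{\R_+}f)\varphi' = -\int_{\R} 1_{\R_+}f'\,\varphi$, which says exactly that the distributional derivative of $1_{\R_+}f$ is $1_{\R_+}f' \in L^1_{\mathrm{loc}}(\R;X)$. Combined with $1_{\R_+}f \in L^1_{\mathrm{loc}}(\R;X)$, this yields $1_{\R_+}f \in W^{1,1}_{\mathrm{loc}}(\R;X)$ with the claimed derivative.

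For the inductive step, suppose the result holds for $k-1$ (for every Banach space). Let $f \in W^{k+1,1}_{\mathrm{loc}}(\R;X)$ with $f(0) = \cdots = f^{(k)}(0) = 0$. Apply the base case $k=0$ to $f$ itself (using only $f(0)=0$): $1_{\R_+}f \in W^{1,1}_{\mathrm{loc}}(\R;X)$ with $(1_{\R_+}f)' = 1_{\R_+}f'$. Now observe that $f' \in W^{k,1}_{\mathrm{loc}}(\R;X)$ and $(f')(0) = \cdots = (f')^{(k-1)}(0) = 0$, so the induction hypothesis applies to $f'$: $1_{\R_+}f' \in W^{k,1}_{\mathrm{loc}}(\R;X)$ with $(1_{\R_+}f')^{(j)} = 1_{\R_+}(f')^{(j)} = 1_{\R_+}f^{(j+1)}$ for $j \in \{1,\ldots,k\}$. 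Since $(1_{\R_+}f)' = 1_{\R_+}f'$ lies in $W^{k,1}_{\mathrm{loc}}(\R;X)$, we get $1_{\R_+}f \in W^{k+1,1}_{\mathrm{loc}}(\R;X)$, and for $j \in \{1,\ldots,k+1\}$, $(1_{\R_+}f)^{(j)} = (1_{\R_+}f')^{(j-1)} = 1_{\R_+}f^{(j)}$, where for $j=1$ this is the base case identity and for $j \geq 2$ it follows from the previous display. This closes the induction.

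The only genuinely delicate point is the justification of integration by parts in the base case, i.e.\ that the version of $f$ described in the paragraph preceding the lemma (uniformly continuous on bounded intervals, with $f(y)-f(x) = \int_x^y f'(t)\ud t$) is absolutely continuous so that $\int_0^R f\varphi' = [f\varphi]_0^R - \int_0^R f'\varphi$ holds. This is standard: writing $f(t) = f(0) + \int_0^t f'(s)\ud s$ on $[0,R]$ and using Fubini's theorem to swap the order of integration in $\int_0^R \big(\int_0^t f'(s)\ud s\big)\varphi'(t)\ud t$ gives the result directly without invoking any classical real-analysis theorem about absolute continuity. I would phrase the argument this way to keep it self-contained and to handle the Bochner-integral setting cleanly. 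Everything else — membership in $L^1_{\mathrm{loc}}$, the measurability of $1_{\R_+}f$ — is immediate.
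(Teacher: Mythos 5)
Your proof is correct and takes essentially the same route as the paper: both reduce to $k=0$ by induction and both rest on the representation $f(x)=\int_0^x f'(t)\,\ud t$. The only difference is cosmetic: the paper observes directly that $\one_{\R_+}f(x)=\int_0^x \one_{\R_+}f'(t)\,\ud t$ for all $x\in\R$, which identifies $\one_{\R_+}f$ as a primitive of $\one_{\R_+}f'$ in one line, whereas you verify the weak derivative by testing against $\varphi\in\mathcal{D}(\R)$ and justifying the integration by parts via the same FTC formula plus Fubini; the mathematical content is identical.
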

\begin{proof}
Using an inductive argument we may reduce to the case $k=0$.
So suppose $f \in W^{1,1}_{\mathrm{loc}}(\R;X)$ satisfies $f(0)=0$.
Then $f(x) = \int_{0}^{x}f'(t) \ud t$ for all $x \in \R$, from which it follows that
\[
\one_{\R_{+}}f(x) = \int_{0}^{x}\one_{\R_{+}}f'(t) \ud t, \quad\quad x \in \R.
\]
This shows $\one_{\R_{+}}f \in W^{1,1}_{\mathrm{loc}}(\R;X)$ with $(1_{\R_{+}}f)' = 1_{\R_{+}}f'$.
\end{proof}

\begin{proposition}\label{prop:pointwisemultiplier0}
Let $X$ be a UMD Banach space, $p\in (1,\infty)$ and $\gamma\in (-1,p-1)$. Assume $s>\frac{1+\gamma}{p} -1$ and $k\in \N_0$ are such that $\frac{1+\gamma}{p} -1 + k  <s<\frac{1+\gamma}{p} +k$.
For all $f \in H^{s,p}(\R,w_\gamma;X) \cap W^{k+1,1}_{\mathrm{loc},0}(\R;X)$ we then have
\[
\| \one_{\R_+}f \|_{H^{s,p}(\R,w_\gamma;X)} \lesssim_{s,p,\gamma,X} \|f\|_{H^{s,p}(\R,w_\gamma;X)}.
\]
As a consequence, $\one_{\R_+}$ is a pointwise multiplier on $H^{s,p}_0(\R,w_\gamma;X)$.
Moreover, for all $f\in H^{s,p}_0(\R,w_\gamma;X)$ it holds that
\begin{equation}\label{eq:identityderivativeone}
(\one_{\R_+} f)^{(j)} = \one_{\R_+} f^{(j)}, \quad\quad j\in\{0, \ldots k\}.
\end{equation}
\end{proposition}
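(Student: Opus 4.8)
The plan is to argue by induction on $k$. For the base case $k=0$ the hypothesis reads $\frac{1+\gamma}{p}-1<s<\frac{1+\gamma}{p}$, and since a direct computation gives $-\frac{\gamma'+1}{p'}=\frac{1+\gamma}{p}-1$ and $\frac{\gamma+1}{p}=\frac{1+\gamma}{p}$, this is precisely the range in Theorem~\ref{thm:pointwise_multiplier}, which yields boundedness of $\one_{\R_+}$ on $H^{s,p}(\R,w_\gamma;X)$. The only subtlety is that Theorem~\ref{thm:pointwise_multiplier} produces the estimate a priori for $f\in H^{s,p}(\R,w_\gamma;X)\cap L^p(\R,w_\gamma;X)$, whereas here $f$ is only assumed to lie in $H^{s,p}(\R,w_\gamma;X)\cap W^{k+1,1}_{\mathrm{loc},0}(\R;X)$; for $s\ge0$ this is automatic by Lemma~\ref{lem:compHW}, and for $s<0$ one mollifies: with $\phi_n$ as in Lemma~\ref{lem:convolution;Bessel_pot_space} one has $\phi_n*f\in H^{\infty,p}(\R,w_\gamma;X)\subset L^p(\R,w_\gamma;X)$, $\phi_n*f\to f$ in $H^{s,p}(\R,w_\gamma;X)$, and (as $f\in L^1_{\mathrm{loc}}$) $\phi_n*f\to f$ in $L^1_{\mathrm{loc}}(\R;X)$, so applying Theorem~\ref{thm:pointwise_multiplier} to $\phi_n*f$ and passing to the limit in $\Distr'(\R;X)$ identifies the bounded extension with honest pointwise multiplication and transfers the estimate to $f$.

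For the induction step, fix $k\ge1$ and $s$ with $\frac{1+\gamma}{p}-1+k<s<\frac{1+\gamma}{p}+k$; then $s>\frac{1+\gamma}{p}>0$, so by Lemma~\ref{lem:compHW} any $f$ in the intersection lies in $L^p(\R,w_\gamma;X)$ and $g:=\one_{\R_+}f$ is a genuine element of $L^p(\R,w_\gamma;X)\subset\Distr'(\R;X)$. By Lemma~\ref{lemma:indicator_pointwise_vanishing_trace}, $g\in W^{k+1,1}_{\mathrm{loc}}(\R;X)$ with $g'=\one_{\R_+}f'$. Now apply the induction hypothesis at smoothness $s-1\in(\frac{1+\gamma}{p}-1+(k-1),\frac{1+\gamma}{p}+(k-1))$ twice: to $f$ itself, which lies in $H^{s-1,p}(\R,w_\gamma;X)\cap W^{k,1}_{\mathrm{loc},0}(\R;X)$, obtaining $g\in H^{s-1,p}(\R,w_\gamma;X)$ with $\|g\|_{H^{s-1,p}}\lesssim\|f\|_{H^{s,p}}$; and to $f'$, which lies in $H^{s-1,p}(\R,w_\gamma;X)$ because $\tfrac{d}{dt}\colon H^{s,p}\to H^{s-1,p}$ is bounded (Lemma~\ref{lem:fractionalLaplace}, or Proposition~\ref{prop:Mihlin} applied to $\xi\mapsto i\xi(1+|\xi|^2)^{-1/2}$) and in $W^{k,1}_{\mathrm{loc},0}(\R;X)$ since $(f')^{(j)}(0)=f^{(j+1)}(0)=0$ for $0\le j\le k-1$, obtaining $g'=\one_{\R_+}f'\in H^{s-1,p}(\R,w_\gamma;X)$ with $\|g'\|_{H^{s-1,p}}\lesssim\|f'\|_{H^{s-1,p}}\lesssim\|f\|_{H^{s,p}}$. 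Finally Lemma~\ref{lem:fractionalLaplace} with $\sigma=1$ gives that $g\in H^{s,p}(\R,w_\gamma;X)$ with $\|g\|_{H^{s,p}}\eqsim\|g\|_{H^{s-1,p}}+\|(-\Delta)^{1/2}g\|_{H^{s-1,p}}$, and $(-\Delta)^{1/2}g$ may be replaced by $g'$ because the Fourier multiplier $\xi\mapsto\mathrm{sgn}(\xi)$ is bounded on $H^{s-1,p}(\R,w_\gamma;X)$ (this is where $X\in\mathrm{UMD}$ is genuinely used, via Proposition~\ref{prop:Mihlin}) and $\widehat{g'}(\xi)=i\xi\,\widehat g(\xi)$ while $|\xi|=-i\,\mathrm{sgn}(\xi)\cdot i\xi$. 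Hence $\|\one_{\R_+}f\|_{H^{s,p}}\lesssim\|f\|_{H^{s,p}}$, closing the induction.

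Since $C^\infty_c(\R\setminus\{0\};X)$ is, by definition, dense in $H^{s,p}_0(\R,w_\gamma;X)$ and is contained in $H^{s,p}(\R,w_\gamma;X)\cap W^{k+1,1}_{\mathrm{loc},0}(\R;X)$, the estimate just proved shows $\one_{\R_+}$ extends to a bounded operator on $H^{s,p}_0(\R,w_\gamma;X)$. For \eqref{eq:identityderivativeone}, note first that $\tfrac{d^j}{dt^j}$ maps $H^{s,p}_0(\R,w_\gamma;X)$ into $H^{s-j,p}_0(\R,w_\gamma;X)$ (apply it to an approximating sequence from $C^\infty_c(\R\setminus\{0\};X)$, whose derivatives are again in $C^\infty_c(\R\setminus\{0\};X)$), and that for $0\le j\le k$ the exponent $s-j$ lies either in the range governed by the already-established case $k-j$ (when $j<k$) or in the base-case range (when $j=k$), so $\one_{\R_+}$ is bounded on $H^{s-j,p}_0(\R,w_\gamma;X)$. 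Taking $\varphi_n\in C^\infty_c(\R\setminus\{0\};X)$ with $\varphi_n\to f$ in $H^{s,p}(\R,w_\gamma;X)$, the identity $(\one_{\R_+}\varphi_n)^{(j)}=\one_{\R_+}\varphi_n^{(j)}$ is trivial because $\varphi_n$ vanishes near $0$, and letting $n\to\infty$—using continuity of $\tfrac{d^j}{dt^j}$ on the Bessel scale together with the boundedness of $\one_{\R_+}$ on $H^{s,p}_0$ and on $H^{s-j,p}_0$—gives \eqref{eq:identityderivativeone}. The main obstacle is the induction step: transferring the vanishing (trace) conditions from $f$ to $f'$ and keeping track of the shifted admissible $s$-ranges, and the replacement of the $(-\Delta)^{1/2}$-characterisation of the Bessel norm by the $\tfrac{d}{dt}$-characterisation; everything else is routine.
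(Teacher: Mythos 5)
Your proof is correct and rests on the same idea as the paper's: the trace conditions defining $W^{k+1,1}_{\mathrm{loc},0}$ allow $\partial^j$ to commute with multiplication by $\one_{\R_+}$ (Lemma~\ref{lemma:indicator_pointwise_vanishing_trace}), reducing everything to the base range $s\in(\tfrac{1+\gamma}{p}-1,\tfrac{1+\gamma}{p})$ covered by Theorem~\ref{thm:pointwise_multiplier}. The one organizational difference is that the paper performs this reduction in a single step via the $k$-step equivalence $\|f\|_{H^{s,p}}\eqsim\|f\|_{H^{s-k,p}}+\|\partial^k f\|_{H^{s-k,p}}$ cited from \cite{MeyVerpoint}, whereas you descend one order at a time by induction, supplying the one-step version of that equivalence yourself from Lemma~\ref{lem:fractionalLaplace} with $\sigma=1$ together with boundedness of the $\mathrm{sgn}(\xi)$-multiplier; this makes your argument slightly more self-contained at the cost of length. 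You are also more careful than the paper about reconciling, via mollification, the abstract bounded extension furnished by Theorem~\ref{thm:pointwise_multiplier} with honest pointwise multiplication when $s<0$ and the relevant functions need only be in $L^1_{\mathrm{loc}}$ rather than $L^p$ — a small point the paper glosses over and which is worth spelling out. Your limiting argument for \eqref{eq:identityderivativeone} works at level $s-j$ rather than the paper's fixed level $s-k$, but both variants are sound.
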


\begin{proof}
As in \cite[Proposition 3.4]{MeyVerpoint} one checks the following equivalence of extended norms on $\mathcal{S}'(\R;X)$:
\begin{equation}\label{eq:normequiv}
\begin{array}{ll}
\|f\|_{H^{s,p}(\R,w_{\gamma};X)}
&\eqsim_{s,\gamma,p,X}\: \|f\|_{H^{s-k,p}(\R,w_{\gamma};X)} + \|\partial^k f\|_{H^{s-k,p}(\R,w_{\gamma};X)} \\
&\eqsim_{s,\gamma,p,X}\: \sum_{j=0}^k \|\partial^j f\|_{H^{s-k,p}(\R,w_{\gamma};X)}.
\end{array}
\end{equation}

Let $f \in H^{s,p}(\R,w_\gamma;X) \cap W^{k+1,1}_{\mathrm{loc},0}(\R;X)$
Using \eqref{eq:normequiv}, Lemma~\ref{lemma:indicator_pointwise_vanishing_trace} and Theorem \ref{thm:pointwise_multiplier} we find
\begin{align*}
\|\one_{\R_+} f\|_{H^{s,p}(\R,w_{\gamma};X)} & \lesssim_{s,p,\gamma,X}  \|\one_{\R_+} f\|_{H^{s-k,p}(\R,w_{\gamma};X)} + \|\partial^k (\one_{\R_+} f)\|_{H^{s-k,p}(\R,w_{\gamma};X)}
\\ & =  \|\one_{\R_+} f\|_{H^{s-k,p}(\R,w_{\gamma};X)} + \|\one_{\R_+} \partial^k f\|_{H^{s-k,p}(\R,w_{\gamma};X)}
\\ & \lesssim_{s,p,\gamma,X}  \|f\|_{H^{s-k,p}(\R,w_{\gamma};X)} + \|\partial^k f\|_{H^{s-k,p}(\R,w_{\gamma};X)}
\\ & \lesssim_{s,p,\gamma,X} \|f\|_{H^{s,p}(\R,w_{\gamma};X)}.
\end{align*}
By a density argument we find that $\one_{\R_+}$ is a pointwise multiplier on $H^{s,p}_0(\R,w_\gamma;X)$.

Finally, to check that \eqref{eq:identityderivativeone} holds for $f \in H^{s,p}_0(\R,w_\gamma;X)$, observe that for $0\leq j\leq k$, by \eqref{eq:normequiv} and the above estimate
\[\|\partial^j (\one_{\R_+} f)\|_{H^{s-k,p}(\R,w_{\gamma};X)} \leq C \|\one_{\R_+}f\|_{H^{s,p}(\R,w_{\gamma};X)} \leq C \|f\|_{H^{s,p}(\R,w_{\gamma};X)}.\]
Therefore, if $f\in H^{s,p}_0(\R,w_{\gamma};X)$, then letting $f_n\in C^\infty_c(\R\setminus\{0\};X)$ be such that $f_n\to f$ in $H^{s,p}_0(\R,w_{\gamma};X)$, we find that $\partial^j (\one_{\R_+} f_n) \to \partial^j (\one_{\R_+} f)$ in $H^{s-k,p}(\R,w_{\gamma};X)$. Since $\partial^j f_n \to \partial^{j} f$ in $H^{s-k,p}(\R,w_{\gamma};X)$, by Theorem \ref{thm:pointwise_multiplier} also $\one_{\R_+}\partial^j f_n \to \one_{\R_+} \partial^{j} f$ in $H^{s-k,p}(\R,w_{\gamma};X)$.
The validity of \eqref{eq:identityderivativeone} for functions from $C^{\infty}_{c}(\R \setminus \{0\})$ and uniqueness of limits in $H^{s-k,p}(\R,w_{\gamma};X)$ yields \eqref{eq:identityderivativeone} for general $f \in H^{s,p}_0(\R,w_\gamma;X)$.
\end{proof}

\begin{proposition}\label{prop:trace}
Let $\gamma\in (-1, p-1)$ and $s \in \R$. Assume $k\in \N_0$ satisfies $k+\frac{1+\gamma}{p}<s$.
Then the following assertions hold:
\begin{enumerate}[$(1)$]
\item $\Tr_k:H^{s,p}(\R,w_{\gamma};X)\cap C^k(\R;X) \to X^{k}$ given by $\Tr_k f = (f(0), f'(0), \ldots, f^{(k)}(0))$ uniquely extends to a bounded linear mapping $\Tr_k:H^{s,p}(\R,w_{\gamma};X)\to X^{k+1}$.
\item If $f\in H^{s,p}(\R,w_{\gamma};X)$ satisfies $f|_{(0,\delta)} = 0$ or $f|_{(-\delta,0)} = 0$ for some $\delta > 0$, then $\Tr_k f = 0$.
\item There exists a bounded mapping $\ext_k:X^{k+1}\to H^{s,p}(\R,w_{\gamma};X)$ such that $\Tr_k(\ext_k)$ is the identity on $X^{k+1}$.
\end{enumerate}
\end{proposition}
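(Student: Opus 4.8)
The natural strategy is to establish the three assertions in the order $(3)\Rightarrow(1)\Rightarrow(2)$ in the sense that a concrete construction of the extension operator in (1) comes first, then continuity of the trace, and finally the localization property. Throughout one should reduce to $k=0$, since the case of higher $k$ follows by applying the $k=0$ result to the derivatives $f, f', \ldots, f^{(k)}$ together with the norm equivalence \eqref{eq:normequiv}, which identifies $H^{s,p}(\R,w_\gamma;X)$ with the space of $f$ such that $\partial^j f \in H^{s-k,p}(\R,w_\gamma;X)$ for $j = 0,\ldots,k$, and where $s - k > \frac{1+\gamma}{p}$; then $\Tr_k f = (\Tr_0 f, \Tr_0(\partial f), \ldots, \Tr_0(\partial^k f))$.

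For the $k=0$ case of (1), the plan is to use the Sobolev embedding $H^{s,p}(\R,w_\gamma;X)\hookrightarrow C_b(\R;X)$ valid for $s > \frac{1+\gamma}{p}$. One way to see this without invoking more machinery: write $f = \mathcal{J}_{-s} g = G_s * g$ with $g\in L^p(\R,w_\gamma;X)$, and use the final assertion of Lemma \ref{lem:propGs} that $G_s\in L^{p'}(\R,w_\gamma')$ when $s > \frac{1+\gamma}{p}$; then by the weighted duality \eqref{eq:fgdualityweight}, for every $x$,
\[
\|f(x)\|_X = \|(G_s * g)(x)\|_X \le \int_\R \|G_s(x-y)\| \|g(y)\|_X \ud y \le \|G_s(x - \cdot)\|_{L^{p'}(\R,w_\gamma')}\,\|g\|_{L^p(\R,w_\gamma;X)}.
\]
Here one needs that $y\mapsto G_s(x-y)$ lies in $L^{p'}(w_\gamma')$ \emph{uniformly in $x$}; since $w_\gamma'$ is translation-doubling and $G_s$ decays exponentially, a standard splitting of the integral into $\{|x-y| \le 1\}$ and $\{|x-y| > 1\}$ gives a bound uniform in $x$ (the weight $w_\gamma'$ near the translated singularity contributes at most a bounded factor). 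This shows $\sup_x\|f(x)\|_X \lesssim \|f\|_{H^{s,p}(\R,w_\gamma;X)}$, and since $C_c^\infty(\R)\otimes X$ is dense (Lemma \ref{lem:densityCcH}) and smooth functions are continuous, $f$ has a continuous version; hence $\Tr_0 f := f(0)$ is well-defined and bounded. This simultaneously gives (2) for $k=0$: if $f$ vanishes on a one-sided neighborhood of $0$, then its continuous version vanishes there too, so $f(0)=0$; the general $k$ follows from the reduction above since each $\partial^j f$ then vanishes near $0$ from one side.

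For (3) with $k=0$, the plan is to set $\ext_0 x_0 := \phi(\cdot)\, x_0$ for a fixed $\phi\in C_c^\infty(\R)$ with $\phi(0)=1$; then $\ext_0$ maps $X$ boundedly into $\Schw(\R;X)\hookrightarrow H^{s,p}(\R,w_\gamma;X)$ (Lemma \ref{lem:densityCcH}) and $\Tr_0\ext_0 = \mathrm{id}_X$. For general $k$, one wants $\ext_k(x_0,\ldots,x_k)$ to be a smooth compactly supported function whose first $k$ derivatives at $0$ are prescribed; take $\ext_k(x_0,\ldots,x_k) := \phi(\cdot)\sum_{j=0}^k \frac{(\cdot)^j}{j!} x_j$ with the same cutoff $\phi$, which is in $\Schw(\R;X)$ and satisfies $\Tr_k\ext_k = \mathrm{id}$ by direct differentiation (using $\phi\equiv 1$ near $0$).

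I do not anticipate a serious obstacle here; the one point requiring a little care is the uniform-in-$x$ bound on $\|G_s(x-\cdot)\|_{L^{p'}(w_\gamma')}$, since the weight $w_\gamma'$ is not translation-invariant — but the exponential decay of $G_s$ away from the origin, combined with the local integrability estimate for $|G_s|^{p'}w_\gamma'$ near $0$ already recorded in the proof of Lemma \ref{lem:propGs}, makes this routine. The only other thing to double-check is consistency of the reduction to $k=0$ with the norm equivalence \eqref{eq:normequiv}, namely that $s - k > \frac{1+\gamma}{p}$ under the hypothesis $k + \frac{1+\gamma}{p} < s$, which is immediate.
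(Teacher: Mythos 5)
Your plan for parts (1) and (2) hinges on the claim that $H^{s,p}(\R,w_\gamma;X)\hookrightarrow C_b(\R;X)$, which you would derive from a \emph{uniform}-in-$x$ bound $\sup_x\|G_s(x-\cdot)\|_{L^{p'}(\R,w_\gamma')}<\infty$. This uniform bound is false for $\gamma<0$ in the range $\tfrac{1+\gamma}{p}<s\le \tfrac1p$, and the gap is not cosmetic. When $\gamma<0$ one has $w_\gamma'(y)=|y|^{|\gamma|/(p-1)}$, which \emph{vanishes} at $y=0$; it is exactly this zero, aligned with the singularity of $G_s$ at the origin, that makes the final assertion of Lemma \ref{lem:propGs} true. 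As soon as you translate to $x\ne 0$, the two singular points separate: near $y=x$ the weight is bounded away from zero, and you are left with $\int_{|x-y|\le 1}|G_s(x-y)|^{p'}\,dy$, which diverges precisely when $(s-1)p'\le -1$, i.e.\ $s\le 1/p$. The hypothesis $s>\tfrac{1+\gamma}{p}$ does \emph{not} guarantee $s>1/p$ when $\gamma<0$. (For $\gamma\ge 0$ it does, which is why your heuristic about a ``bounded factor'' sounds plausible; but it is precisely the case $\gamma<0$ where the statement is new and delicate.) In fact $H^{s,p}(\R,w_\gamma)$ need not consist of continuous functions away from the origin in that range, so there is no continuous version to evaluate and no $C_b$ bound to be had.

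The paper's proof circumvents this by estimating only $|f(0)|$: writing $f=G_\theta*g$ one gets the single inequality
\[
|f(0)|\le\int_\R|G_\theta(x)|\,|g(-x)|\,dx\le\|G_\theta\|_{L^{p'}(\R,w_\gamma')}\,\|g\|_{L^p(\R,w_\gamma)},
\]
using H\"older with the weight centred at the same point as the singularity of $G_\theta$; no supremum over $x$ is taken. Accordingly, for part (2) the paper does not use continuity of a representative but instead mollifies one-sidedly with a $\phi$ supported in $(-2,-1)$ (Lemma \ref{lem:convolution;Bessel_pot_space}), producing smooth $\phi_n*f$ that vanish in a neighbourhood of $0$ for large $n$ and converge to $f$ in $H^{s,p}(\R,w_\gamma;X)$, after which one passes to the limit using continuity of $\Tr_k$. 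You should replace your $C_b$-embedding argument by this local duality-at-$0$ estimate and the mollification argument. Your reduction to $k=0$ via \eqref{eq:normequiv} (i.e.\ differentiating and lowering the order) is essentially what the paper also does, and your construction for (3) by a polynomial times a cutoff is fine and is the same as the paper's.
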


\begin{proof}
We first prove (1).
By Lemma \ref{lem:densityCcH}, it is enough to establish boundedness of
\[
\Tr_k:(H^{s,p}(\R,w_{\gamma};X)\cap C^k(\R;X),\|\,\cdot\,\|_{H^{s,p}(\R,w_{\gamma};X)}) \to X^{k+1}.
\]
Choosing $x_{j}^{*} \in X^{*}$ with $\|x_{j}^{*}\| = 1$ and $\|f^{(j)}(0)\| = \langle f^{(j)}(0),x_{j}^{*}\rangle$ for each $j \in \{0,\ldots,k\}$ we have
$\langle f,x_{j}^{*}\rangle \in H^{s,p}(\R,w_{\gamma})\cap C^k(\R)$ with
\[
\|f^{(j)}(0)\| = |\langle f^{(j)}(0),x_{j}^{*}\rangle| = |\langle f,x_{j}^{*}\rangle^{(j)} (0)|, \quad \| \langle f,x_{j}^{*}\rangle \|_{H^{s,p}(\R,w_{\gamma})} \leq \|f\|_{H^{s,p}(\R,w_{\gamma};X)}.
\]
So we may restrict ourselves to the case $X=\C$.
Recall from \cite[Proposition 3.4]{MeyVerpoint} that $d/dt$ is a bounded linear operator from $H^{\sigma,p}(\R,w_{\gamma})$ to $H^{\sigma-1,p}(\R,w_{\gamma})$ for every $\sigma \in \R$.
By differentiation it thus suffices to prove that, given $\theta \in (\frac{1+\gamma}{p},\frac{1+\gamma}{p}+1)$, the following estimate holds
\[
|f(0)| \lesssim_{\theta, \gamma,p} \|f\|_{H^{\theta,p}(\R,w_{\gamma})}, \quad\quad f\in H^{\theta,p}(\R,w_{\gamma})\cap C(\R).
\]
Here we actually only need to consider $f\in H^{\theta,p}(\R,w_{\gamma})\cap C_{c}(\R)$; indeed, given $\eta \in C^{\infty}_{c}(\R)$ with $\eta(0)=1$, $f \mapsto \eta f$ defines by complex interpolation (see Proposition \ref{prop:extensionH}) a bounded linear operator on $H^{\theta,p}(\R,w_{\gamma})$ and we may consider $\eta f$ instead of $f$.
Using Lemma~\ref{lem:convolution;Bessel_pot_space} together with \cite[Theorem~1.2.19]{Grafakos1} one can check that $C^{\infty}_{c}(\R)$ is dense in $H^{\theta,p}(\R,w_{\gamma})\cap C_{c}(\R)$, where $C_{c}(\R)$ has been equipped with the supremum norm.
It thus is enough to estimate
\[
|f(0)| \lesssim_{\theta, \gamma,p} \|f\|_{H^{\theta,p}(\R,w_{\gamma};X)}, \quad\quad f\in C^{\infty}_{c}(\R).
\]
To this end, let $f \in C^{\infty}_{c}(\R) \subset \mathcal{S}(\R)$ and put $g := (1-\Delta)^{\theta/2} f\in \mathcal{S}(\R)$. Then, letting $G_{\theta} \in L^{1}(\R)$ be the kernel Lemma \ref{lem:propGs}, we find
\[
f(0) = (1-\Delta)^{-\theta/2}g\,(0) = G_{\theta}*g\,(0) = \int_\R G_\theta(x)g(-x)\ud x.
\]
By Lemma \ref{lem:propGs} we find
\[
|f(0)| \leq \int_\R |G_\theta(x)|\, |g(-x)|\ud x  \leq \|G_{\theta}\|_{L^{p'}(\R,w_{\gamma}')} \|g\|_{L^p(\R,w_{\gamma})}
\lesssim_{\theta, \gamma,p} \|f\|_{H^{\theta,p}(\R,w_{\gamma})}.
\]

To prove (2) consider the case that $f = 0$ on $(0,\delta)$. Let $\phi\in C^\infty(\R)$ be such that $\int\phi(x) \ud x = 1$ and $\phi$ is supported on $(-2, -1)$ and put $\phi_n(x) := n \phi(n x)$.
By Lemma~\ref{lem:convolution;Bessel_pot_space}, $\|\phi_n * f\|_{H^{s,p}(\R,w_{\gamma};X)} \lesssim_{p,\gamma} \|f\|_{H^{s,p}(\R,w_{\gamma};X)}$ with $\phi_n * f\to f$ in $H^{s,p}(\R,w_{\gamma};X)$.
Clearly, $\phi_n*f\in C^\infty(\R;X)$ and by the support conditions one sees that $\phi_n*f(0) = 0$ for all $n>2\delta^{-1}$. Therefore, $\Tr_k (\phi_n * f) = 0$ and the result follows by letting $n\to \infty$ and using the continuity of $\Tr_k$.

To prove (3) choose $\phi_0, \ldots, \phi_k\in C^\infty_c(\R)$ such that $\phi^{(n)}_j(0) = \delta_{jn}$ for all $0\leq j\leq k$ and $0\leq n \leq k$ and let $\ext_k (x_j)_{j=1}^k = \sum_{j=0}^k \phi_j x_j$. This clearly satisfies the required properties.
\end{proof}

We can now give a characterization of $H^{s,p}_0(\R,w_{\gamma};X)$ in terms of traces.
For it will be convenient to say that the statement $\Tr_k f = 0$ for $k \leq -1$ is empty.

\begin{proposition}\label{prop:characvanishingtrace}
Let $X$ be a Banach space, $p\in (1, \infty)$ and $\gamma\in (-1, p-1)$. Let $s \in \R$ be such that $k+\frac{1+\gamma}{p}<s<k+1 + \frac{1+\gamma}{p}$ with  $k\in \Z, k \geq -1$. Then
\[H^{s,p}_0(\R,w_{\gamma};X) = \{f\in H^{s,p}(\R,w_{\gamma};X): \Tr_k f = 0\}.\]
\end{proposition}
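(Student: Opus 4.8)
The plan is to prove the two inclusions separately. For the inclusion $H^{s,p}_0(\R,w_\gamma;X) \subseteq \{f : \Tr_k f = 0\}$, observe first that this set is closed in $H^{s,p}(\R,w_\gamma;X)$ by the boundedness of $\Tr_k$ from Proposition~\ref{prop:trace}(1) (when $k \geq 0$; when $k = -1$ the set is all of $H^{s,p}$ and there is nothing to prove). Since $H^{s,p}_0$ is by definition the closure of $C^\infty_c(\R\setminus\{0\};X)$, it suffices to check that every $f \in C^\infty_c(\R\setminus\{0\};X)$ satisfies $\Tr_k f = 0$; but such $f$ vanishes identically on a neighbourhood $(-\delta,\delta)$ of $0$, so this is immediate (or one may invoke Proposition~\ref{prop:trace}(2)).

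\textbf{The reverse inclusion.} This is the substantial direction. Let $f \in H^{s,p}(\R,w_\gamma;X)$ with $\Tr_k f = 0$. The strategy is: first reduce to compactly supported $f$ using a cutoff, then approximate by functions vanishing near $0$ via a scaling/translation argument, and finally mollify to land in $C^\infty_c(\R\setminus\{0\};X)$. Concretely, I would fix $\eta \in C^\infty_c(\R)$ with $\eta \equiv 1$ near $0$ and write $f = \eta f + (1-\eta)f$. The term $(1-\eta)f$ is supported away from $0$ and lies in $H^{s,p}$, so after mollification (Lemma~\ref{lem:convolution;Bessel_pot_space}, with mollifiers supported suitably so the support stays away from $0$) it is approximated by elements of $C^\infty_c(\R\setminus\{0\};X)$; note $\Tr_k((1-\eta)f) = 0$ automatically since $1-\eta$ vanishes near $0$. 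So it remains to handle $g := \eta f$, which is compactly supported with $\Tr_k g = 0$.

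\textbf{Handling the compactly supported piece.} For $g$ with compact support and $\Tr_k g = 0$, the idea is to split $g = \one_{\R_+} g + \one_{\R_-} g$. By Proposition~\ref{prop:pointwisemultiplier0} (applicable precisely because $k + \frac{1+\gamma}{p} < s < k+1+\frac{1+\gamma}{p}$ matches the hypothesis there, once we know $g$ has a representative in $W^{k+1,1}_{\mathrm{loc},0}$ — which follows from $\Tr_k g = 0$ together with the embedding $H^{s,p}(\R,w_\gamma;X) \hookrightarrow W^{k+1,1}_{\mathrm{loc}}$ coming from Lemma~\ref{lem:propGs}/Sobolev embedding), both $\one_{\R_+} g$ and $\one_{\R_-} g$ lie in $H^{s,p}(\R,w_\gamma;X)$ with controlled norm. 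It therefore suffices to approximate $h := \one_{\R_+} g$ (and symmetrically $\one_{\R_-}g$) by functions in $C^\infty_c(\R\setminus\{0\};X)$. For this I would translate: let $h_\tau(x) := h(x - \tau)$ for small $\tau > 0$; then $h_\tau$ is supported in $[\tau, \infty)$, hence vanishes near $0$, and $h_\tau \to h$ in $H^{s,p}(\R,w_\gamma;X)$ as $\tau \downarrow 0$ — here one must be slightly careful since translation is not well-behaved on weighted spaces, so instead of translating $h$ directly I would dilate: set $h_\lambda(x) := h(\lambda x)$ for $\lambda > 1$, which pushes the support of $\one_{\R_+}g$ toward $0$ the wrong way; the cleaner device is to apply the reflection-free argument on the unweighted part and transfer. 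Alternatively, and more robustly, combine $h = \one_{\R_+}g$ with a shifted cutoff $\chi_\delta$ equal to $1$ on $[\delta,\infty)$ and $0$ near $0$: then $\chi_\delta h \to h$ in $H^{s,p}$ as $\delta \downarrow 0$ (because $\|(1-\chi_\delta)h\|_{H^{s,p}(\R,w_\gamma;X)} \to 0$, which can be shown using that $(1-\chi_\delta)h$ is supported in $[0,2\delta]$ and the trace-zero condition controls its $H^{s,p}$-norm via the pointwise multiplier estimate of Proposition~\ref{prop:pointwisemultiplier0} applied on a shrinking interval), and $\chi_\delta h$ vanishes near $0$ so it is in turn approximated by $C^\infty_c(\R\setminus\{0\};X)$ after mollification. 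Summing the $\R_+$ and $\R_-$ contributions and the $(1-\eta)f$ term recovers $f$ as a limit of functions in $C^\infty_c(\R\setminus\{0\};X)$, giving $f \in H^{s,p}_0(\R,w_\gamma;X)$.

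\textbf{Main obstacle.} The delicate point is making the approximation of the trace-zero compactly supported piece rigorous: showing $\|(1-\chi_\delta)\one_{\R_+}g\|_{H^{s,p}(\R,w_\gamma;X)} \to 0$ as $\delta \downarrow 0$ requires quantitative control of the $H^{s,p}$-norm of a function supported in a shrinking interval near $0$ that vanishes to order $k$ at $0$. This is where the power weight $w_\gamma$ and the exact condition $k + \frac{1+\gamma}{p} < s < k+1 + \frac{1+\gamma}{p}$ enter: one uses the Hardy inequality (Lemma~\ref{lem:Hardypp}) together with the identity $(\one_{\R_+}g)^{(j)} = \one_{\R_+}g^{(j)}$ from \eqref{eq:identityderivativeone} to bound $\|(1-\chi_\delta)\one_{\R_+}g\|_{H^{s,p}}$ by a quantity controlled by $\int_{(0,2\delta)} \|g^{(k)}(t)\|^p t^{\gamma - (s-k)p}\,dt$ or similar, which tends to $0$ by dominated convergence once the integrability is justified by the trace condition. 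Getting the bookkeeping of this localized estimate correct — in particular ensuring the exponent $\gamma - (s-k)p$ keeps the integrand locally integrable, which is exactly equivalent to $s - k < \frac{1+\gamma}{p} + 1$, wait, to $s < k + 1 + \frac{1+\gamma}{p}$, matching our hypothesis — is the crux of the argument; everything else is routine mollification and the already-established pointwise multiplier theorem.
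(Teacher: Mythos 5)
Your plan has a genuine gap at the step where you apply Proposition~\ref{prop:pointwisemultiplier0} to $g = \eta f$. That proposition requires $g \in H^{s,p}(\R,w_\gamma;X) \cap W^{k+1,1}_{\mathrm{loc},0}(\R;X)$, and you assert this follows from $\Tr_k g = 0$ together with an embedding $H^{s,p}(\R,w_\gamma;X) \hookrightarrow W^{k+1,1}_{\mathrm{loc}}(\R;X)$. But the hypotheses allow $s < k+1$ (take e.g.\ $k=0$, $\gamma=0$, $p=2$, $s\in (1/2,1)$), and for such $s$ the embedding fails: $H^{s,2}(\R)$ with $1/2<s<1$ contains H\"older-continuous functions whose distributional derivative is not in $L^1_{\mathrm{loc}}$. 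The alternative part of Proposition~\ref{prop:pointwisemultiplier0} --- that $\one_{\R_+}$ multiplies $H^{s,p}_0$ --- cannot be invoked either, since membership of $g$ in $H^{s,p}_0$ is precisely what is being proved. The paper avoids this circularity by \emph{first} approximating $f$ by $g_n \in C^\infty_c(\R)\otimes X$ (Lemma~\ref{lem:densityCcH}) and \emph{then} correcting the trace, setting $h_n := g_n - \ext_k\,(g_n^{(j)}(0))_{j=0}^k$ with the right inverse $\ext_k$ from Proposition~\ref{prop:trace}(3). Since $\ext_k$ maps into $C^\infty_c(\R)\otimes X$, the $h_n$ are still smooth and compactly supported with $\Tr_k h_n = 0$, so they lie in $W^{k+1,1}_{\mathrm{loc},0}$ automatically and the multiplier result applies without any additional embedding. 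This trace-correction step is the missing ingredient in your reduction, and it cannot be replaced by the cutoff $\eta f$.

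A second, smaller divergence concerns approximating $h_0 := \one_{\R_+}h$ by $C^\infty_c(\R\setminus\{0\};X)$. Your shrinking-cutoff scheme, which needs the quantitative bound $\|(1-\chi_\delta)\one_{\R_+}g\|_{H^{s,p}} \to 0$, is more delicate than necessary and, as you acknowledge, not fully carried out. The paper uses a cleaner device: a \emph{one-sided} mollifier $\phi_n := n\phi(n\cdot)$ with $\supp\phi \subseteq [1,\infty)$. Because $\supp h_0$ is a compact subset of $[0,\infty)$, the convolution $\phi_n * h_0$ is smooth with compact support contained in $(0,\infty)$, and $\phi_n * h_0 \to h_0$ in $H^{s,p}(\R,w_\gamma;X)$ by Lemma~\ref{lem:convolution;Bessel_pot_space}. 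This produces approximants in $C^\infty_c(\R\setminus\{0\};X)$ in one stroke, without any estimate on a shrinking interval. (Relatedly, your piece $(1-\eta)f$ is supported away from $0$ but need not be compactly supported, so mollification alone does not yield $C^\infty_c$ approximants there; the paper's initial density reduction handles this automatically as well.)
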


Note that $\Tr_k f$ is well defined  by Proposition \ref{prop:trace}.

\begin{proof}
Clearly, $\Tr_k f= 0$ for every $f\in C^\infty_c(\R\setminus\{0\};X)$. By continuity this extends to every $f\in H^{s,p}_0(\R,w_{\gamma};X)$ (see Proposition \ref{prop:trace}) and hence ``$\subseteq$'' follows. To prove the converse, let $f\in  H^{s,p}(\R,w_{\gamma};X)$ be such that $\Tr_k f = 0$.
By Lemma \ref{lem:densityCcH} we can find $\{g_{n}\}_{n \in \N} \subset C^\infty_c(\R) \otimes X$ such that
$g_{n} \to f$ in $H^{s,p}(\R,w_{\gamma};X)$ as $n \to \infty$.
Let $\ext_k$ be as constructed in the proof of Proposition \ref{prop:trace} and put $h_{n} := g_{n} - \ext_k (g_{n}^{(j)}(0))_{j=0}^k$ for each $n \in \N$.
Then $h_{n} \in \{ h \in C^\infty_c(\R) : \Tr_k h=0 \} \otimes X$ and, by Proposition \ref{prop:trace}, $h_{n} \to f-\ext_k(0)_{j=0}^{k} = f$ in $H^{s,p}(\R,w_{\gamma};X)$ as $n \to \infty$.

It remains to show that we can approximate a function $h \in C^\infty_c(\R)$ satisfying $\Tr_k h=0$
by a function in $C^\infty_c(\R\setminus\{0\})$ with respect to the norm of $H^{s,p}(\R,w_{\gamma})$. Writing $h = \one_{\R_+} h + \one_{\R_-} h=:h_0 + h_1$, it follows from Proposition \ref{prop:pointwisemultiplier0} that $h_0, h_1\in H^{s,p}(\R,w_{\gamma};X)$ and hence it suffices to approximate each of the terms $h_0$ and $h_1$. Fix $\phi\in C_c^\infty(\R)$ with $\int_{\R} \phi\,dx= 1$ and $\supp \phi \subseteq [1, \infty)$ and define $\phi_n := n\phi(n\cdot)$ for each $n\in \N$. Then $\phi_n * h_0 \in C^\infty_c(\R\setminus\{0\})$ with $\phi_n * h_0 \to h_0$ in $H^{s,p}(\R,w_{\gamma})$ as $n \to \infty$ by Lemma \ref{lem:convolution;Bessel_pot_space}.
A similar argument can be used for $h_1$.
\end{proof}

We can now prove the main result of this section:
\begin{theorem}\label{thm:interpR0}
Let $X$ be a UMD space and $\gamma\in (-1,p-1)$. Let $\theta\in (0,1)$ and $s_0, s_1>-1+\frac{\gamma+1}{p}$. Let $s = s_0 (1-\theta) + s_1 \theta$.
If $s_0, s_1, s\notin \N_0 + \frac{\gamma+1}{p}$, then
\begin{equation}\label{eq:interpH0}
[H^{s_0,p}_0(\R,w_{\gamma};X), H^{s_1,p}_0(\R,w_{\gamma};X)]_{\theta} = H^{s,p}_0(\R,w_{\gamma};X).
\end{equation}
\end{theorem}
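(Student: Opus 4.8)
The plan is to reduce \eqref{eq:interpH0} to the interpolation result \emph{without} boundary conditions from Proposition~\ref{prop:extensionH}\eqref{it:extensionH1}, together with the trace characterization of $H^{s,p}_0$ from Proposition~\ref{prop:characvanishingtrace}, via the standard retract/co-retract device of Lemma~\ref{lem:coretract}. More precisely, for $j \in \{0,1\}$ write $k_j$ for the unique integer with $k_j + \frac{\gamma+1}{p} < s_j < k_j+1+\frac{\gamma+1}{p}$ (allowing $k_j = -1$), and let $k$ be the analogous integer for $s$. A small but important preliminary observation is that, by convexity of the interval structure, $\min(k_0,k_1) \leq k \leq \max(k_0,k_1)$, so it is no loss to assume $k_0 \leq k_1$ after relabeling, and then $k_0 \leq k \leq k_1$.

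First I would construct a projection $P$ onto the subspace $\{\Tr_{k_1} f = 0\}$ that is simultaneously bounded on $H^{s_0,p}(\R,w_\gamma;X)$ and $H^{s_1,p}(\R,w_\gamma;X)$. The natural choice is $P f := f - \ext_{k_1}(\Tr_{k_1} f)$, where $\ext_{k_1}$ and $\Tr_{k_1}$ are the operators from Proposition~\ref{prop:trace}; note $\ext_{k_1}$ maps into $C^\infty_c(\R)\otimes X \subseteq H^{t,p}(\R,w_\gamma;X)$ for \emph{every} $t$, and $\Tr_{k_1}$ is bounded on $H^{t,p}(\R,w_\gamma;X)$ for every $t > k_1 + \frac{\gamma+1}{p}$; for $t = s_0$ one has $t > k_0 + \frac{\gamma+1}{p}$ but possibly $k_0 < k_1$, so I would instead use $\Tr_{k_0}$-data padded with zeros, i.e.\ work with $P f := f - \ext_{k_1}(\Tr_{k_1} f)$ only on the scale $t \geq s_1$, and handle the lower endpoint by the identity $H^{s_0,p}_0 = \{\Tr_{k_0} f = 0\}$ separately. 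Cleanest is to take $R := \mathrm{id} - \ext_k \circ \Tr_k \colon H^{t,p}(\R,w_\gamma;X) \to H^{t,p}_0(\R,w_\gamma;X)$ for $t \in \{s_0,s_1\}$ — this is legitimate provided $\Tr_k$ makes sense on both, which forces the reduction step above that put $k \le \min(s_0,s_1)-\text{(fractional part)}$... this is exactly the obstacle, see below. With $R$ and $S = $ inclusion in hand, $RS = \mathrm{id}$ on $H^{t,p}_0$, so Lemma~\ref{lem:coretract} applies with $X_j = H^{s_j,p}(\R,w_\gamma;X)$, $Y_j = H^{s_j,p}_0(\R,w_\gamma;X)$, yielding that $R$ is an isomorphism from $SR(X_\theta)$ onto $[Y_0,Y_1]_\theta$. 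Then $X_\theta = H^{s,p}(\R,w_\gamma;X)$ by Proposition~\ref{prop:extensionH}\eqref{it:extensionH1}, $SR(X_\theta) = \{f \in H^{s,p}(\R,w_\gamma;X) : \Tr_k f = 0\} = H^{s,p}_0(\R,w_\gamma;X)$ by Proposition~\ref{prop:characvanishingtrace}, and since $R = \mathrm{id}$ on this subspace the isomorphism is the identity, giving \eqref{eq:interpH0} with equivalence of norms.

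The main obstacle is precisely the book-keeping with the integer $k$ when $k_0 \neq k_1$: the operator $\Tr_k$ need not be bounded on $H^{s_j,p}(\R,w_\gamma;X)$ for the endpoint $s_j$ with $k_j < k$, since boundedness of $\Tr_k$ requires $s_j > k + \frac{\gamma+1}{p}$. I would resolve this by splitting into the two cases $k_0 = k_1$ (then $k = k_0 = k_1$ and everything above goes through verbatim with $R = \mathrm{id} - \ext_k\circ\Tr_k$) and $k_0 < k_1$. In the second case $s_0 \in (k_0 + \frac{\gamma+1}{p}, k_0+1+\frac{\gamma+1}{p})$ with $k_0 < k \le k_1$; here I would still use $R := \mathrm{id} - \ext_{k_1} \circ \Tr_{k_1}$ but first note $\Tr_{k_1}$ is \emph{not} bounded on $H^{s_0,p}$ either. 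The fix is to iterate: peel off one derivative-trace at a time, using that $d/dt \colon H^{\sigma,p}(\R,w_\gamma;X) \to H^{\sigma-1,p}(\R,w_\gamma;X)$ is bounded for all $\sigma$ (from \cite[Proposition~3.4]{MeyVerpoint}), or — more economically — invoke that $\one_{\R_+}$ is a bounded pointwise multiplier on $H^{s,p}_0$ for the relevant $s$ by Proposition~\ref{prop:pointwisemultiplier0}, which directly gives a bounded projection $f \mapsto \one_{\R_+} f + \one_{\R_-} f$-type decomposition; combining $\one_{\R_\pm}$-multiplication with a reflection and Proposition~\ref{prop:interpH} reduces the whole statement to the already-established interpolation of the supported spaces $H^{s,p}_{\R_\pm^d}$. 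I expect the shortest route is: use Proposition~\ref{prop:pointwisemultiplier0} to write $H^{s,p}_0(\R,w_\gamma;X) = \one_{\R_+} H^{s,p}_0 \oplus \one_{\R_-} H^{s,p}_0$ as a complemented sum (valid since all the $s_j,s$ avoid the exceptional values), identify each summand with a supported space $H^{s,p}_{\overline{\R_\pm}}(\R,w_\gamma;X)$, apply Proposition~\ref{prop:interpH}, and reassemble — the only thing to check carefully being that the projections $\one_{\R_\pm}$ are uniformly bounded across the scale, which is exactly the content of Proposition~\ref{prop:pointwisemultiplier0} combined with the norm equivalence \eqref{eq:normequiv}.
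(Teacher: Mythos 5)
Your final route — decompose $H^{s,p}_0(\R,w_\gamma;X)$ as the complemented sum $\one_{\R_+}H^{s,p}_0 \oplus \one_{\R_-}H^{s,p}_0$, identify the summands with the supported spaces $H^{s,p}_{\R_\pm}(\R,w_\gamma;X)$, interpolate those via Proposition~\ref{prop:interpH}, and reassemble — is exactly the paper's proof, which uses $R(g,h)=g+h$ and $Sf=(\one_{\R_+}f,\one_{\R_-}f)$ as mutually inverse bounded maps between $H^{\sigma,p}_0$ and $H^{\sigma,p}_{\R_+}\times H^{\sigma,p}_{\R_-}$ at every admissible level $\sigma$, relying on Propositions~\ref{prop:trace}, \ref{prop:characvanishingtrace} and \ref{prop:pointwisemultiplier0} for well-definedness and boundedness. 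Your initial attempt via $\mathrm{id}-\ext_k\circ\Tr_k$ indeed fails for the reason you give (the trace order $k$ is not constant across the scale when $s_0,s_1$ straddle a threshold, so no single such projection is bounded on both endpoints), and your pivot to the $\one_{\R_\pm}$-decomposition is precisely the paper's repair.
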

\begin{proof}
Assume $s_0, s_1, s\notin \N_0 + \frac{\gamma+1}{p}$ and let $E^{\sigma,p}_{\rm prod} := H^{\sigma,p}_{\R_+}(\R,w_{\gamma};X) \times H^{\sigma,p}_{\R_-}(\R,w_{\gamma};X)$, $\sigma \in \R$, for shorthand notation.

Let $\sigma > -1+\frac{\gamma+1}{p}$ with $\sigma \notin \N_0 + \frac{\gamma+1}{p}$.
By Proposition \ref{prop:trace} $\Tr_k$ vanishes on $H^{\sigma,p}_{\R_\pm}(\R,w_{\gamma};X)$ for integers $k\in [0,\sigma-\tfrac{\gamma+1}{p})$.
Thus, in view of Proposition~\ref{prop:characvanishingtrace}, the map
\[R: E^{\sigma,p}_{\rm prod} \to H^{\sigma,p}_0(\R,w_{\gamma};X), \qquad R(g,h) := g+h,\]
is a well-defined contraction.
That the map
$$S: H^{\sigma,p}_0(\R,w_{\gamma};X) \to E^{\sigma,p}_{\rm prod} ,\qquad Sf :=(\one_{\R_+^d} f, \one_{\R_-^d} f),$$
is well-defined and continuous follows from Propositions \ref{prop:pointwisemultiplier0} and \ref{prop:characvanishingtrace}.
Since $R^{-1} = S$, the result follows from Proposition \ref{prop:interpH}.
\end{proof}

\subsection{Results on the positive half line}

Let $\gamma\in (-1, p-1)$ and $s \in \R$. Assume $k\in \N_0$ satisfies $k+\frac{1+\gamma}{p}<s$.
By Proposition~\ref{prop:trace}, if $\tilde{f}_{1},\tilde{f}_{2} \in H^{s,p}(\R,w_{\gamma};X)$ satisfy $\tilde{f}_{1|\R_{+}} = \tilde{f}_{2|\R_{+}}$, then $\Tr_{k}\tilde{f}_{1}=\Tr_{k}\tilde{f}_{2}$.
Therefore, $\Tr_{k}: H^{s,p}(\R,w_{\gamma};X)\to X^{k+1}$ gives rise to a well-defined bounded linear operator $\Tr_{k,+}:H^{s,p}(\R_+,w_{\gamma};X)\to X^{k+1}$ given by $\Tr_{k,+}f=\Tr_{k}\tilde{f}$ whenever $\tilde{f}_{|\R_{+}} = f$.
After reducing to the scalar-valued case, Proposition~\ref{prop:extensionH} shows that
\begin{equation}\label{eq:tracekfpoint}
\Tr_{k,+}f = (f(0), f'(0), \ldots, f^{(k)}(0)), \quad\quad f\in H^{s,p}(\R_+,w_{\gamma};X)\cap C^{k}([0,\infty);X);
\end{equation}
in the case $X=\C$ we simply pick the least integer $m \geq |s|$ and observe that $\Tr_{k,+} = \Tr_{k} \circ \mathcal{E}_+^m $.

Let $H^{s,p}_0(\R_+,w_{\gamma};X)$ denote the closure of $C^\infty_c((0,\infty);X)$ in $H^{s,p}(\R_+,w_{\gamma};X)$.

\begin{proposition}\label{prop:characvanishingtrace2}
Let $X$ be a Banach space, $p \in (1,\infty)$, $\gamma\in (-1,p-1)$ and $s \in \R$.
Assume $k \in \N_{0}$ satisfies $k+\frac{1+\gamma}{p}<s<k+1 + \frac{1+\gamma}{p}$.
Then
\[H^{s,p}_0(\R_+,w_{\gamma};X) = \{f\in H^{s,p}(\R_+,w_{\gamma};X): \Tr_{k,+} f = 0\}.\]
\end{proposition}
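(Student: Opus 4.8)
The plan is to reduce to the full-line characterization of Proposition~\ref{prop:characvanishingtrace} by means of the restriction map $H^{s,p}(\R,w_{\gamma};X) \to H^{s,p}(\R_+,w_{\gamma};X)$ together with (arbitrary) extensions, exploiting that $H^{s,p}(\R_+,w_{\gamma};X)$ is by definition the factor space $[H^{s,p}(\R,w_{\gamma};X)](\R_+)$, so that every element of the former admits an extension to the latter.

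For the inclusion ``$\subseteq$'' I would argue as follows. Any $\phi\in C^\infty_c((0,\infty);X)$ extends by zero to an element of $C^\infty_c(\R\setminus\{0\};X)$, whose trace $\Tr_k$ at the origin vanishes; hence $\Tr_{k,+}\phi = 0$ by the defining relation $\Tr_{k,+}f = \Tr_k\tilde f$ (cf.\ \eqref{eq:tracekfpoint}). Since $\Tr_{k,+}$ is a bounded operator on $H^{s,p}(\R_+,w_{\gamma};X)$, passing to the limit gives $\Tr_{k,+}f = 0$ for every $f$ in the closure $H^{s,p}_0(\R_+,w_{\gamma};X)$.

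For ``$\supseteq$'', let $f\in H^{s,p}(\R_+,w_{\gamma};X)$ with $\Tr_{k,+}f=0$ and pick $g\in H^{s,p}(\R,w_{\gamma};X)$ with $g|_{\R_+}=f$. By the well-definedness of $\Tr_{k,+}$, which itself rests on Proposition~\ref{prop:trace}(2), one has $\Tr_k g = \Tr_{k,+}f = 0$, so Proposition~\ref{prop:characvanishingtrace} yields $g\in H^{s,p}_0(\R,w_{\gamma};X)$. Choose $g_n\in C^\infty_c(\R\setminus\{0\})\otimes X$ with $g_n\to g$ in $H^{s,p}(\R,w_{\gamma};X)$. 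The restriction map to $\R_+$ is a contraction $H^{s,p}(\R,w_{\gamma};X)\to H^{s,p}(\R_+,w_{\gamma};X)$, and $g_n|_{\R_+}\in C^\infty_c((0,\infty))\otimes X$ because the support of $g_n$ is compact and bounded away from $0$; hence $f = g|_{\R_+} = \lim_{n\to\infty} g_n|_{\R_+}$ lies in $H^{s,p}_0(\R_+,w_{\gamma};X)$, as desired.

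I expect no substantial obstacle here: the genuinely hard work (the $\one_{\R_+}$-multiplier Theorem~\ref{thm:pointwise_multiplier} and the full-line trace characterization of Proposition~\ref{prop:characvanishingtrace}) is already in place, and what remains is the bookkeeping of the restriction/extension correspondence and of the well-definedness of $\Tr_{k,+}$, both available from the material preceding the statement. The only point meriting a line of justification is that restricting a $C^\infty_c(\R\setminus\{0\})$-function to $\R_+$ yields a $C^\infty_c((0,\infty))$-function, which is immediate from the support condition.
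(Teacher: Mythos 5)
Your proof is correct and follows essentially the same route as the paper: extend $f$ to $\tilde f$ on $\R$, use well-definedness of $\Tr_{k,+}$ to get $\Tr_k\tilde f=0$, invoke Proposition~\ref{prop:characvanishingtrace} to approximate $\tilde f$ by $C^\infty_c(\R\setminus\{0\};X)$-functions, and restrict back to $\R_+$ (where the restriction is a contraction by the factor-space definition). The paper simply declares the inclusion ``$\subseteq$'' clear, whereas you spell out the argument (extension by zero plus continuity of $\Tr_{k,+}$), but this is the same content.
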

\begin{proof}
Clearly, $\subseteq$ holds.
To prove the converse let  $f\in H^{s,p}(\R_+,w_{\gamma};X)$ be such that $\Tr_{k,+} f = 0$.
Pick $\tilde{f} \in H^{s,p}(\R,w_{\gamma};X)$ with $\tilde{f}_{|\R_{+}} = f$.
Then $\Tr_{k}\tilde{f} = \Tr_{k,+} f = 0$. By Proposition~\ref{prop:characvanishingtrace} we thus get $\tilde{f} = \lim_{n \to \infty}\tilde{f}_{n}$ in $H^{s,p}(\R,w_{\gamma};X)$ for some sequence $(\tilde{f}_{n})_{n \in \N}$ from $C^{\infty}_{c}(\R \setminus \{0\};X)$. Now $f_{n}:= \tilde{f}_{n|\R_{+}} \in C^\infty_c((0,\infty);X)$ with $f_{n} \to f$ in $H^{s,p}(\R_+,w_{\gamma};X)$ as $n \to \infty$.
\end{proof}

\begin{theorem}\label{thm:compl_int_bd-cond_half-space}
Let $X$ be a UMD space, $p \in (1,\infty)$ and $\gamma\in (-1,p-1)$. Let $\theta\in (0,1)$ and $s_0, s_1>-1+\frac{\gamma+1}{p}$. Let $s = s_0 (1-\theta) + s_1 \theta$.
If $s_0, s_1, s\notin \N_0 + \frac{\gamma+1}{p}$, then
\begin{equation}\label{eq:interpH0_half-line}
[H^{s_0,p}_0(\R_+,w_{\gamma};X), H^{s_1,p}_0(\R_+,w_{\gamma};X)]_{\theta} = H^{s,p}_0(\R_+,w_{\gamma};X).
\end{equation}
\end{theorem}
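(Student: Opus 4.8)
The plan is to reduce the statement to the whole-line result Theorem~\ref{thm:interpR0} by means of the abstract factor-space lemma, Lemma~\ref{lem:abstractinterpdomain}, with the extension operator $\mathcal{E}_+^m$ of Proposition~\ref{prop:extensionH}(2) playing the role of $\mathcal{E}$.

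The first and essentially only non-formal step is to identify $H^{s,p}_0(\R_+,w_{\gamma};X)$ isometrically with the factor space $[H^{s,p}_0(\R,w_{\gamma};X)](\R_+)$ for every admissible exponent, i.e.\ for every $s>-1+\frac{1+\gamma}{p}$ with $s\notin\N_0+\frac{1+\gamma}{p}$. Let $k\in\Z$, $k\geq-1$, be the unique integer with $k+\frac{1+\gamma}{p}<s<k+1+\frac{1+\gamma}{p}$. If $f=\tilde f|_{\R_+}$ for some $\tilde f\in H^{s,p}_0(\R,w_{\gamma};X)$, then $f\in H^{s,p}(\R_+,w_{\gamma};X)$ and, by the extension-independence of $\Tr_k$ recorded just before Proposition~\ref{prop:characvanishingtrace2} together with Proposition~\ref{prop:characvanishingtrace}, one has $\Tr_{k,+}f=\Tr_k\tilde f=0$ (the condition being vacuous when $k=-1$), hence $f\in H^{s,p}_0(\R_+,w_{\gamma};X)$ by Proposition~\ref{prop:characvanishingtrace2}. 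Conversely, if $f\in H^{s,p}_0(\R_+,w_{\gamma};X)$, pick any $\tilde f\in H^{s,p}(\R,w_{\gamma};X)$ with $\tilde f|_{\R_+}=f$; then $\Tr_k\tilde f=\Tr_{k,+}f=0$, so $\tilde f\in H^{s,p}_0(\R,w_{\gamma};X)$ by Proposition~\ref{prop:characvanishingtrace}. Since in this situation \emph{every} $H^{s,p}(\R,w_{\gamma};X)$-extension of $f$ already belongs to $H^{s,p}_0(\R,w_{\gamma};X)$, the factor-space norm of $f$ in $[H^{s,p}_0(\R,w_{\gamma};X)](\R_+)$ equals $\|f\|_{H^{s,p}(\R_+,w_{\gamma};X)}=\|f\|_{H^{s,p}_0(\R_+,w_{\gamma};X)}$, giving the desired isometric identification (applied below to $s_0$, $s_1$ and $s$, all of which are admissible by hypothesis).

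Next I would fix an integer $m\geq\max\{|s_0|,|s_1|\}$ and take $\mathcal{E}_+^m$ from Proposition~\ref{prop:extensionH}(2). For $j\in\{0,1\}$ it maps $H^{s_j,p}(\R_+,w_{\gamma};X)$ boundedly into $H^{s_j,p}(\R,w_{\gamma};X)$ with $(\mathcal{E}_+^m f)|_{\R_+}=f$, and by the previous paragraph it sends $H^{s_j,p}_0(\R_+,w_{\gamma};X)$ into functions of vanishing trace, hence into $H^{s_j,p}_0(\R,w_{\gamma};X)$. Thus the single operator $\mathcal{E}_+^m$ is an extension operator, in the sense of Definition~\ref{def:factor}, for $[H^{s_j,p}_0(\R,w_{\gamma};X)](\R_+)=H^{s_j,p}_0(\R_+,w_{\gamma};X)$ for both $j=0$ and $j=1$. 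Applying Lemma~\ref{lem:abstractinterpdomain} with $\mathbb{F}^0=H^{s_0,p}_0(\R,w_{\gamma};X)$, $\mathbb{F}^1=H^{s_1,p}_0(\R,w_{\gamma};X)$, $\Omega=\R_+$ and $\mathcal{E}=\mathcal{E}_+^m$ then gives $[\mathbb{F}^0(\R_+),\mathbb{F}^1(\R_+)]_{\theta}=\mathbb{F}^{\theta}(\R_+)$ with equivalent norms, where $\mathbb{F}^{\theta}=[\mathbb{F}^0,\mathbb{F}^1]_{\theta}=H^{s,p}_0(\R,w_{\gamma};X)$ by Theorem~\ref{thm:interpR0}; this is exactly the point at which the hypotheses $s_0,s_1,s\notin\N_0+\frac{1+\gamma}{p}$ are used. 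Combining this with the isometric identifications from the second paragraph yields precisely \eqref{eq:interpH0_half-line}.

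I do not expect a substantial obstacle: once the factor-space identification is in place, the rest is a routine retract/coretract manipulation. The only place that requires care is that identification, where one must correctly pair the integer $k$ with the exponent $s$, treat the boundary case $k=-1$ (where there is no trace condition and $H^{s,p}_0=H^{s,p}$) separately, and use that $\Tr_k$ applied to an extension depends only on its restriction to $\R_+$.
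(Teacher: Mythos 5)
Your proposal is correct and follows essentially the same route as the paper: both proofs combine the trace characterizations of Propositions~\ref{prop:characvanishingtrace} and~\ref{prop:characvanishingtrace2} with the extension operator $\mathcal{E}_+^m$ of Proposition~\ref{prop:extensionH}(2) and the whole-line result Theorem~\ref{thm:interpR0}, wrapped in a retract/coretract argument. You phrase the last step via the factor-space identification $H^{s,p}_0(\R_+,w_\gamma;X)\cong[H^{s,p}_0(\R,w_\gamma;X)](\R_+)$ and Lemma~\ref{lem:abstractinterpdomain}, whereas the paper invokes Lemma~\ref{lem:coretract} directly (as in Proposition~\ref{prop:interpH}); these are the same mechanism in slightly different packaging.
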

\begin{proof}
Let $m$ be the least integer such that $m\geq \max\{|s_0|, |s_1|\}$.
For each $\sigma > -1+\frac{\gamma+1}{p}$ with $|\sigma| \leq m$  and $\sigma \notin \N_0 + \frac{\gamma+1}{p}$,
\[
S:H^{\sigma,p}_0(\R_+,w_{\gamma};X)\to  H^{\sigma,p}_0(\R,w_{\gamma};X), \quad Sf:= \mathcal{E}_+^m f,
\]
is a well-defined bounded linear operator thanks to Propositions \ref{prop:characvanishingtrace} and \ref{prop:characvanishingtrace2}.
For each $\sigma \in \R$, let $R:H^{\sigma,p}_0(\R,w_{\gamma};X)\to H^{\sigma,p}_0(\R_+,w_{\gamma};X)$ denote the restriction operator.
Using Theorem~\ref{thm:interpR0}, the proof can now be completed as in Proposition~\ref{prop:interpH}~(2).
\end{proof}

\subsection{Fractional domain spaces}

For $p\in (1, \infty)$ and $\gamma\in (-1,p-1)$ let
\[W^{k,p}_0(\R_+,w_{\gamma};X) = \{f\in W^{k,p}(\R_+,w_{\gamma};X): f(0) = f^{(1)}(0)=\ldots = f^{(k-1)}(0) = 0\}.\]
If $X$ is a UMD space, then it follows from Propositions \ref{prop:W=H2}, \ref{prop:characvanishingtrace2} and \eqref{eq:tracekfpoint} that \begin{equation}\label{eq:W0H0}
W^{k,p}_0(\R_+,w_{\gamma};X) = H^{k,p}_0(\R_+,w_{\gamma};X).
\end{equation}

Let us now briefly recall the $H^{\infty}$-calculus for sectorial operators, for which there are several conventions in the literature.
For a survey and an extensive treatment of the subject we refer the reader to \cite{Weis2006_survey} and \cite{Haase:2,HNVW2,KuWe}, respectively.

For each $\theta \in (0,\pi)$ we define the sector
\[
\Sigma_{\theta} := \{ \lambda \in \C \setminus \{0\} : |\arg(\lambda)| < \theta \}.
\]
A closed densely defined linear operator $(A, D(A))$ on $X$
is said to be {\em sectorial of type} $\sigma \in (0,\pi)$ if it is injective and has dense range, $\Sigma_{\pi-\sigma} \subset \rho(-A)$, and for all $\sigma' \in (\sigma,\pi)$
\[
\sup\{ \| \lambda(\lambda+A)^{-1}\| : \lambda \in  \Sigma_{\pi-\sigma'}\} < \infty.
\]
The infimum of all $\sigma \in (0,\pi)$ such that $A$ is sectorial of type $\sigma$ is called the {\em sectoriality angle} of $A$ and is denoted by $\phi_{A}.$

Let $H^{\infty}(\Sigma_{\theta})$ denote the Banach space of all bounded analytic functions $f : \Sigma_{\theta} \to \C$, endowed with the supremum norm. Let $H^{\infty}_{0}(\Sigma_{\theta})$ denote its linear subspace of all $f$ for which there exists $\epsilon > 0$ and $C \geq 0$ such that
\[
|f(z)| \leq \frac{C|z|^{\varepsilon}}{(1 + |z|)^{2\varepsilon}} , z \in \Sigma_{\theta}.
\]
If $A$ is sectorial of type $\sigma_{0} \in (0,\pi)$, then for all $\sigma \in (\sigma_{0},\pi)$
and $f \in H^{\infty}_{0}(\Sigma_{\sigma})$ we define the bounded linear operator $f(A)$ by
\[
f(A) := \frac{1}{2\pi\imath} \int_{\partial\Sigma_{\sigma}}f(z)(z+A)^{-1} \ud z.
\]

A sectorial operator $A$ of type $\sigma_{0} \in (0,\pi)$ is said to have a {\em bounded $H^{\infty}(\Sigma_{\sigma})$-calculus} for $\sigma \in (\sigma_{0},\pi)$ if there exists a $C \in [0,\infty)$ such that
\[
\| f(A) \| \leq \|f\|_{H^{\infty}(\Sigma_{\sigma})}, \qquad f \in H^{\infty}_{0}(\Sigma_{\sigma}).
\]
In this case the mapping $f \mapsto f(A)$ extends to a bounded algebra homomorphism from $H^{\infty}(\Sigma_{\sigma})$ to $\mathcal{B}(X)$ of norm $\leq C$.
The $H^{\infty}$-angle of $A$ is defined as the infimum of all $\sigma$ for which $A$ has a bounded $H^{\infty}(\Sigma_{\sigma})$-calculus and is denoted by $\phi^{\infty}_{A}$.

Below we will make use of the following fact.
Let $A$ be an operator on a reflexive Banach space $X$. If $A$ is a sectorial operator having a bounded $H^{\infty}$-calculus, then so is $A^{*}$ with $\phi^{\infty}_{A}=\phi^{\infty}_{A^{*}}$.

\begin{theorem}\label{thm:fractional_domains}
Let $X$ be a UMD space, $p \in (1,\infty)$ and $\gamma\in (-1,p-1)$.
\begin{enumerate}
\item\label{item:thm:fractional_domains;bd_cond} The realization of $\partial_{t}$ on $L^{p}(\R_{+},w_{\gamma};X)$ with domain $W^{1,p}_{0}(\R_{+},w_{\gamma};X)$ has a bounded $H^\infty$-calculus of angle $\pi/2$ with $\Do(\partial_{t}^{s}) = H^{s,p}_{0}(\R_+,w_{\gamma};X)$ for every $s > 0$ with $s \notin \frac{1+\gamma}{p}+\N_{0}$.
\item\label{item:thm:fractional_domains;no_bd_cond} The realization of $-\partial_{t}$ on $L^{p}(\R_{+},w_{\gamma};X)$ with domain $W^{1,p}(\R_{+},w_{\gamma};X)$ has a bounded $H^\infty$-calculus of angle $\pi/2$ with $\Do((-\partial_{t})^{s}) = H^{s,p}(\R_+,w_{\gamma};X)$ for every $s > 0$.
\end{enumerate}
\end{theorem}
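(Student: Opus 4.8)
The plan is to deduce Theorem~\ref{thm:fractional_domains} from the complex interpolation results of this section together with standard facts linking bounded $H^\infty$-calculus, bounded imaginary powers, and identification of fractional domain spaces. I begin with part~\eqref{item:thm:fractional_domains;no_bd_cond}, which is the cleaner of the two. Consider $A := -\partial_t$ on $L^p(\R_+,w_\gamma;X)$ with domain $W^{1,p}(\R_+,w_\gamma;X)$. Since $X$ is UMD, $L^p(\R_+,w_\gamma;X)$ is again UMD, and it is classical that the (left) translation semigroup $(T(t))_{t\ge 0}$ on the half line is a bounded $C_0$-semigroup whose generator is exactly $-\partial_t = A$; moreover $A$ is injective with dense range. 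To get the bounded $H^\infty$-calculus with angle $\pi/2$, I would represent the resolvent $(\lambda+A)^{-1}$ as the Laplace transform of the semigroup and transfer the problem, via the Fourier/Laplace transform, to a Fourier multiplier statement on $L^p(\R,w_\gamma;X)$ — equivalently invoke the known fact that the derivative operator on $L^p(\R)$ (and on the half line, via the above semigroup picture) has a bounded $H^\infty$-calculus of angle $\pi/2$ on every UMD space, which follows from Mihlin's multiplier theorem in the form of Proposition~\ref{prop:Mihlin} applied to the symbols $z\mapsto f(\imath\xi)$ for $f\in H^\infty_0(\Sigma_\sigma)$, $\sigma>\pi/2$. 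Once the $H^\infty$-calculus is in hand, $A$ has bounded imaginary powers, so by the Dore--Venni type identification of complex interpolation spaces with fractional domain spaces one has $\Do(A^s) = [L^p(\R_+,w_\gamma;X), \Do(A)]_{s}$ for $s\in(0,1)$, i.e.
\[
\Do(A^s) = [L^p(\R_+,w_\gamma;X), W^{1,p}(\R_+,w_\gamma;X)]_s .
\]
By Proposition~\ref{prop:W=H2} we have $W^{1,p}(\R_+,w_\gamma;X) = H^{1,p}(\R_+,w_\gamma;X)$, and by Proposition~\ref{prop:extensionH}\eqref{it:extensionH1} (with $\Omega = \R^d_+$, $d=1$) the right-hand side equals $H^{s,p}(\R_+,w_\gamma;X)$. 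This gives the claim for $s\in(0,1)$; for general $s>0$ write $s = k+\vartheta$ with $k\in\N_0$, $\vartheta\in(0,1)$, and use $\Do(A^s) = \{f\in\Do(A^k): A^k f\in\Do(A^\vartheta)\}$ together with the fact that $A^k = (-\partial_t)^k$ maps $H^{s,p}(\R_+,w_\gamma;X)$ isomorphically onto $H^{\vartheta,p}(\R_+,w_\gamma;X)$ modulo lower-order terms (the norm equivalence \eqref{eq:normequiv}, in its half-line form via Proposition~\ref{prop:extensionH}).

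For part~\eqref{item:thm:fractional_domains;bd_cond}, let $B := \partial_t$ on $L^p(\R_+,w_\gamma;X)$ with domain $W^{1,p}_0(\R_+,w_\gamma;X)$. The key observation is that $B$ is (up to sign and the reflection $t\mapsto -t$) the adjoint-type operator associated with $A$ from part~\eqref{item:thm:fractional_domains;no_bd_cond}: indeed, integration by parts shows that for $f\in W^{1,p}_0(\R_+,w_\gamma;X)$ and $g\in W^{1,p'}(\R_+,w_\gamma';X^*)$ one has $\langle \partial_t f, g\rangle = -\langle f,\partial_t g\rangle$, so that $B$ is (a restriction of) the negative adjoint of $-\partial_t$ acting on the dual space $L^{p'}(\R_+,w_\gamma';X^*)$, the domain condition $f(0)=0$ being exactly what makes the boundary term vanish. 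Since $X$ is reflexive (being UMD), $X^*$ is UMD, $w_\gamma'\in A_{p'}$, and part~\eqref{item:thm:fractional_domains;no_bd_cond} applied on the dual side gives that $-\partial_t$ with domain $W^{1,p'}(\R_+,w_\gamma';X^*)$ has a bounded $H^\infty$-calculus of angle $\pi/2$; by the duality fact recalled just before the theorem ($\phi^\infty_{A^*} = \phi^\infty_A$ on reflexive spaces), $B$ then has a bounded $H^\infty$-calculus of angle $\pi/2$ as well, hence bounded imaginary powers. Consequently, as above,
\[
\Do(B^s) = [L^p(\R_+,w_\gamma;X), W^{1,p}_0(\R_+,w_\gamma;X)]_s, \qquad s\in(0,1),
\]
and now \eqref{eq:W0H0} identifies $W^{1,p}_0(\R_+,w_\gamma;X) = H^{1,p}_0(\R_+,w_\gamma;X)$, while Theorem~\ref{thm:compl_int_bd-cond_half-space} identifies the interpolation space as $H^{s,p}_0(\R_+,w_\gamma;X)$, provided $s\neq \frac{1+\gamma}{p}$ (this is where the excluded exponents enter). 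For $s>1$ with $s\notin\frac{1+\gamma}{p}+\N_0$ one again bootstraps: write $s=k+\vartheta$, use $\Do(B^s)=\{f\in\Do(B^k): \partial_t^k f\in\Do(B^\vartheta)\}$, observe via Proposition~\ref{prop:pointwisemultiplier0} and the identity \eqref{eq:identityderivativeone} that $\partial_t^k$ maps $H^{s,p}_0$ into $H^{\vartheta,p}_0$ compatibly with the boundary conditions, and Theorem~\ref{thm:compl_int_bd-cond_half-space} closes the loop.

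The main obstacle I anticipate is not the interpolation bookkeeping but establishing the bounded $H^\infty$-calculus of angle exactly $\pi/2$ for the derivative on the weighted half line, and — more delicately — verifying that the abstract identification $\Do(A^s) = [X_0, \Do(A)]_s$ is available in this weighted, vector-valued, half-line setting. The cleanest route is: (i) prove sectoriality of angle $\pi/2$ directly from the translation-semigroup resolvent formula (uniform boundedness of $\lambda(\lambda+A)^{-1}$ on sectors follows from $\|T(t)\|\le 1$ and a contour estimate); (ii) prove the $H^\infty$-bound by writing $f(A) = \frac{1}{2\pi\imath}\int_{\partial\Sigma_\sigma} f(z)(z+A)^{-1}\,dz$, expressing $(z+A)^{-1}$ as a Fourier multiplier with symbol $(z+\imath\xi)^{-1}$ on the half-line (using an extension operator to pass to $\R$, or directly via the Laplace transform), and applying Proposition~\ref{prop:Mihlin} to check that the resulting symbol $\xi\mapsto f(\imath\xi)$ satisfies the Mihlin condition with constant $\lesssim \|f\|_{H^\infty(\Sigma_\sigma)}$ for every $\sigma>\pi/2$; (iii) invoke the Dore–Venni / McIntosh–Yagi identification of fractional domains with complex interpolation spaces for operators with bounded imaginary powers on a UMD (hence $\zeta$-convex) space. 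Steps (i)–(ii) are where the genuine harmonic analysis lives; everything after that is assembling the pieces already proved in Sections~\ref{sec:interpo} and~\ref{sec:intbdr}, with careful attention to the excluded exponents $\frac{1+\gamma}{p}+\N_0$.
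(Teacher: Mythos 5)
Your overall architecture matches the paper's: reduce the $H^\infty$-calculus for $-\partial_t$ on the half line to the full line via Mihlin (Proposition~\ref{prop:Mihlin}), derive the $H^\infty$-calculus for $\partial_t$ with Dirichlet condition by duality, and then identify fractional domain spaces by combining the Dore--Venni/McIntosh--Yagi identification with Propositions~\ref{prop:W=H2}, \ref{prop:extensionH} and Theorem~\ref{thm:compl_int_bd-cond_half-space}. For part~\eqref{item:thm:fractional_domains;no_bd_cond} the paper realizes the resolvent concretely as $(\lambda+A)^{-1}=R(\lambda+\tilde A)^{-1}E$ (restriction of the full-line resolvent after extension by zero) and checks injectivity of $\lambda+A$ by an elementary ODE argument; your semigroup/Laplace-transform picture would accomplish the same thing, though it would need a bit more work to be made precise on the weighted space and to pin down that the generator equals $-\partial_t$ with exactly the stated domain.

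The genuine gap is in part~\eqref{item:thm:fractional_domains;bd_cond}. You correctly note that integration by parts gives $B \subset C^*$ where $B=\partial_t$ on $W^{1,p}_0(\R_+,w_\gamma;X)$ and $C=-\partial_t$ on $W^{1,p'}(\R_+,w_{\gamma'};X^*)$ (you even flag ``a restriction of''), but you then conclude that $B$ has a bounded $H^\infty$-calculus by invoking $\phi^\infty_{C^*}=\phi^\infty_C$. That only gives the calculus for $C^*$; a proper restriction of $C^*$ need not be sectorial, densely defined, or have dense range, let alone inherit the calculus. You must prove $B=C^*$. The paper does this via \cite[Exercise 1.21(4)]{EN}: since $\lambda+C^*$ is injective for $\lambda\in\C_+$ (from part~\eqref{item:thm:fractional_domains;no_bd_cond} on the dual), it suffices to show $\lambda+B$ is surjective. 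The paper exhibits this explicitly: take the half-line solution operator $S(\lambda)$ from part~\eqref{item:thm:fractional_domains;no_bd_cond} and correct its boundary value, setting $T(\lambda)f:=S(\lambda)f-[S(\lambda)f](0)\exp(-\lambda\,\cdot\,)$, which lands in $W^{1,p}_0$ and satisfies $(\lambda+B)T(\lambda)=I$. Without this surjectivity step (or some equivalent argument establishing $B=C^*$, e.g.\ showing $\lambda+B$ has range equal to the whole space), your deduction of the $H^\infty$-calculus for $\partial_t$ with Dirichlet boundary conditions does not go through. A smaller remark: for the identification of $\Do(A^s)$ you propose a bootstrap through $\Do(A^s)=\{f\in\Do(A^k):A^kf\in\Do(A^\vartheta)\}$; the paper avoids this by directly citing \cite[Theorem 6.6.9]{Haase:2}, which gives $\Do(A^s)=[X,\Do(A^m)]_{s/m}$ for all $0<s<m$ at once, and by first checking $\Do(A^k)=W^{k,p}_{(0)}$ with equivalent norms via Lemma~\ref{lem:normequivalenceSobolevhighestorder} and Young's inequality. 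Both routes work, but the paper's is shorter and sidesteps the need to verify compatibility of $\partial_t^k$ with the vanishing-trace condition at intermediate fractional levels.
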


For $\gamma\in [0,p-1)$ the case $\frac{d}{dt}$ follows from \cite[Theorem 4.5]{PrSi}. For $\gamma\in [0,p-1)$ the case $-\frac{d}{dt}$ follows from \cite[Theorem 2.7]{MeySchn}. Below we present a proof that works for all $\gamma\in (-1, p-1)$, in which \eqref{item:thm:fractional_domains;bd_cond} is derived from \eqref{item:thm:fractional_domains;no_bd_cond} by a simple duality argument.

\begin{proof}
Let us first establish the assertions regarding the $H^{\infty}$-calculus.
We start with \eqref{item:thm:fractional_domains;no_bd_cond}, from which we will derive \eqref{item:thm:fractional_domains;bd_cond} by duality.

For \eqref{item:thm:fractional_domains;no_bd_cond} we denote by $A$ the realization of $-\partial_{t}$ on $L^{p}(\R_{+},w_{\gamma};X)$ with domain $W^{1,p}(\R_{+},w_{\gamma};X)$ and by $\tilde{A}$ the realization of $-\partial_{t}$ on $L^{p}(\R,w_{\gamma};X)$ with domain $W^{1,p}(\R,w_{\gamma};X)$. As in \cite[Example 10.2]{KuWe}, using Proposition \ref{prop:Mihlin}, one can show that $\tilde{A}$ has a bounded $H^\infty$-calculus of angle $\pi/2$.
So it is enough to show that $\C_{+} \subset \rho(-A)$ with
\[
(\lambda+A)^{-1}f = R(\lambda+\tilde{A})^{-1}Ef =: S(\lambda)f, \qquad \lambda \in \C_{+}, f \in L^{p}(\R_{+},w_{\gamma};X),
\]
where $E  \in \mathcal{B}(L^{p}(\R_{+},w_{\gamma};X),L^{p}(\R,w_{\gamma};X))$ is the extension by zero operator, and $R$ denotes the operator of restriction from $\R$ to $\R_{+}$.
For each $\lambda \in \C_{+}$, $S(\lambda)$ defines a linear operator from $L^{p}(\R_{+},w_{\gamma};X)$ to $W^{1,p}(\R_{+},w_{\gamma};X)$ with the property that $(\lambda+A)S(\lambda)=I$. So, fixing $\lambda \in \C_{+}$, we only need to show that $\ker(\lambda+A)=\{0\}$. To this end, let $u \in W^{1,p}(\R_{+},w_{\gamma};X)$ satisfy $(\lambda-\partial_{t})u=0$.
By basic distribution theory (cf.\ \cite[Theorem~9.4]{Duistermaat&Kolk_distributies}) we find that $u$ is a classical solution in the sense that $u \in C^{\infty}(\R_{+};X)$ with $u'=\lambda u$, implying that $u = c\exp(\lambda\,\cdot\,)$ for some $c \in X$.
Since $\exp(\lambda\,\cdot\,) \notin L^{p}(\R_{+},w_{\gamma})$, it follows that $u=0$.

For \eqref{item:thm:fractional_domains;bd_cond} we denote by $A$ the realization of $\partial_{t}$ on $L^{p}(\R_{+},w_{\gamma};X)$ with domain $W^{1,p}_{0}(\R_{+},w_{\gamma};X)$ and by $B$ the realization of $-\partial_{t}$ on $L^{p'}(\R_{+},w_{\gamma'};X^{*})$ with domain $W^{1,p'}(\R_{+},w_{\gamma'};X^{*})$.
Recall that $[L^{p}(\R_{+},w_{\gamma};X)]^{*} = L^{p'}(\R_{+},w_{\gamma'};X^{*})$ with respect to the natural pairing (see  \cite[Proposition 3.5]{MeyVerpoint}), $X$ being reflexive as a UMD space (see \cite[Theorem 4.3.3]{HNVW1}).
Integration by parts (see Lemma~\ref{lem:integration_by_parts} below) yields $A \subset B^{*}$.
By \eqref{item:thm:fractional_domains;no_bd_cond} (and the fact that duals of UMD spaces are again UMD) it is enough to establish the reverse.
By \cite[Exercise 1.21(4)]{EN}, for the latter it suffices that $\lambda+A$ is surjective and $\lambda+B^{*}$ is injective for some $\lambda \in \C$. To this end, let us establish this for some fixed $\lambda \in \C_{+}$. Then $\lambda \in \rho(-B)=\rho(-B^{*})$ by \eqref{item:thm:fractional_domains;no_bd_cond}; in particular, $\lambda+B^{*}$ is injective.
As in \eqref{item:thm:fractional_domains;no_bd_cond} we can find a linear operator $S(\lambda): L^{p}(\R_{+},w_{\gamma};X) \to W^{1,p}(\R_{+},w_{\gamma};X)$ such that
$(\lambda+A)S(\lambda) = I$. Then the operator $T(\lambda) : L^{p}(\R_{+},w_{\gamma};X) \to W^{1,p}_{0}(\R_{+},w_{\gamma};X)$ given by
\[
T(\lambda)f:= S(\lambda)f - [S(\lambda)f](0)\exp(-\lambda\,\cdot\,),
\]
satisfies $(\lambda+A)T(\lambda)= I$, which shows that $\lambda+A$ is surjective.

Finally we will identify the fractional domain spaces. From the definitions it follows that $\Do(\partial_{t}^{k}) = W^{k,p}_{0}(\R_+,w_{\gamma};X)$ and $\Do((-\partial_{t})^{k}) = W^{k,p}(\R_+,w_{\gamma};X)$ as sets for every $k \in \N$. Moreover, it follows from Lemma \ref{lem:normequivalenceSobolevhighestorder} and Young's inequality for products that there is also an equivalence of norms.
The assertions concerning the fractional domain spaces subsequently follow from \cite[Theorem 6.6.9]{Haase:2}, Proposition~\ref{prop:W=H2} and Theorem~\ref{thm:compl_int_bd-cond_half-space}.
\end{proof}

\begin{lemma}[Integration by parts]\label{lem:integration_by_parts}
Let $X$ be a Banach space, $p \in (1,\infty)$ and $w \in A_{p}$. For all $u \in W^{1,p}(\R_{+},w;X)$ and $v \in W^{1,p'}(\R_{+},w';X^{*})$, where $w'=w^{-\frac{1}{p-1}}$ is the $p$-dual weight of $w$, there holds the integration by parts identity
\[
\langle u',v \rangle_{\langle L^{p}(\R_{+},w;X),L^{p'}(\R_{+},w';X) \rangle} =  -u(0)v(0) - \langle u,v' \rangle_{\langle L^{p}(\R_{+},w;X),L^{p'}(\R_{+},w';X) \rangle}.
\]
\end{lemma}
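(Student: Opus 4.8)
The plan is to reduce the weighted integration-by-parts identity to a density argument: first establish it for smooth compactly supported functions, where it is the classical fundamental theorem of calculus, and then pass to the limit using the density results from Section \ref{sec:weighted}. The pairing $\langle\cdot,\cdot\rangle$ is the natural duality between $L^{p}(\R_{+},w;X)$ and $L^{p'}(\R_{+},w';X^{*})$, which is well defined since $w'=w^{-1/(p-1)}\in A_{p'}$ and H\"older's inequality \eqref{eq:fgdualityweight} applies; the evaluation $u(0)v(0)$ makes sense because any $u\in W^{1,p}(\R_{+},w;X)$ has a version that is continuous on $[0,\infty)$ (indeed $u(y)-u(x)=\int_x^y u'$ and $u'\in L^{p}(\R_{+},w;X)\subseteq L^1_{\mathrm{loc}}$), and similarly for $v$.

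First I would record the boundary-value fact: for $u\in W^{1,p}(\R_{+},w;X)$, the function $u$ is locally absolutely continuous on $[0,\infty)$, $u(0)$ exists, and moreover $\|u(0)\|\lesssim \|u\|_{W^{1,p}(\R_{+},w;X)}$ on a fixed bounded neighbourhood of $0$ — this follows by integrating $\|u(t)\|\le\|u(0)\|-\cdots$, or more simply by Proposition \ref{prop:trace} applied after an extension, or directly: pick a bounded interval $(0,\delta)$ on which $w$ and $w'$ are both integrable (possible since $w\in L^1_{\mathrm{loc}}$ and $w'\in L^1_{\mathrm{loc}}$), so that $W^{1,p}((0,\delta),w;X)\hookrightarrow W^{1,1}((0,\delta);X)\hookrightarrow C([0,\delta];X)$. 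The same applies to $v$. Next, for $u,v\in C^\infty_c([0,\infty);X)$ (respectively with values in $X^*$), the identity
\[
\int_0^\infty \langle u'(t),v(t)\rangle\,dt = -\langle u(0),v(0)\rangle - \int_0^\infty \langle u(t),v'(t)\rangle\,dt
\]
is just the product rule $\frac{d}{dt}\langle u(t),v(t)\rangle = \langle u'(t),v(t)\rangle + \langle u(t),v'(t)\rangle$ integrated from $0$ to $\infty$, using that $\langle u(t),v(t)\rangle\to 0$ as $t\to\infty$ by compact support. All three terms are finite in the weighted pairing since smooth compactly supported functions lie in every weighted space.

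Finally I would run the density argument. By Lemma \ref{lem:densityCcW} (or rather its analogue on the half line, obtained via the extension operator $\mathcal{E}_+^1$ of Lemma \ref{lem:extension} composed with restriction, together with multiplication by a cutoff and mollification as in the proof of Lemma \ref{lem:densityCcW}), the space $C^\infty_c([0,\infty))\otimes X$ is dense in $W^{1,p}(\R_{+},w;X)$, and likewise $C^\infty_c([0,\infty))\otimes X^*$ is dense in $W^{1,p'}(\R_{+},w';X^{*})$. Taking $u_n\to u$ and $v_n\to v$ in the respective norms, each of the four terms in the identity is continuous in $(u,v)$ with respect to these norms: the two integral pairings by \eqref{eq:fgdualityweight} applied to $u_n'\to u'$, $v_n\to v$ (and $u_n\to u$, $v_n'\to v'$), and the boundary term $\langle u_n(0),v_n(0)\rangle\to\langle u(0),v(0)\rangle$ by the quantitative trace bound from the first step. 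Passing to the limit yields the identity for general $u,v$.

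The only genuine subtlety — and the step I would be most careful about — is the boundary trace estimate $\|u(0)\|\lesssim\|u\|_{W^{1,p}(\R_{+},w;X)}$, because $w$ need not be bounded near $0$; one must exhibit a bounded interval $(0,\delta)$ on which $w^{-1/p}\in L^{p'}$, equivalently $w'\in L^1((0,\delta))$, which holds because $w'\in A_{p'}\subseteq L^1_{\mathrm{loc}}(\R^d)$. On that interval H\"older gives $\int_0^\delta\|u'(t)\|\,dt\le\|u'\|_{L^p((0,\delta),w;X)}\|w^{-1/p}\|_{L^{p'}(0,\delta)}<\infty$ and, choosing $\delta$ with $w((0,\delta))>0$ finite, averaging $u(0)=u(t)-\int_0^t u'$ over $t\in(0,\delta)$ against $w$ yields the bound. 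Everything else is routine.
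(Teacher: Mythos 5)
Your proof is correct and takes essentially the same route as the paper's: reduce to $C^\infty_c([0,\infty))$-functions by density and then pass to the limit. You are in fact more careful than the paper's very terse proof, which appeals to density and classical integration by parts without spelling out the quantitative trace bound $\|u(0)\|\lesssim\|u\|_{W^{1,p}(\R_+,w;X)}$ that is needed for the boundary term to converge.
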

\begin{proof}
By the remark preceding this lemma and Lemma~\ref{lem:densityCcW}, $C^{\infty}_{c}(\overline{\R}_{+}) \otimes X$ is dense in $W^{1,p}(\R_{+},w;X)$ and $C^{\infty}_{c}(\overline{\R}_{+}) \otimes X^{*}$ is dense in $W^{1,p'}(\R_{+},w';X^{*})$. The desired result thus follows from integration by parts for functions from $C^{\infty}_{c}(\overline{\R}_{+})$.
\end{proof}

\def\cprime{$'$}

\end{document}